\newtheorem{thm}{Theorem}
\newtheorem{lem}[thm]{Lemma}
\newtheorem{cor}[thm]{Corollary}
\newtheorem{propn}[thm]{Proposition}
\newtheorem{prop}[thm]{Proposition}
\newtheorem{assumption}{Assumption}
\newtheorem{eg}{Example}
\newcommand{\Ccal}{{\mathcal C}}
\newcommand{\Ecal}{{\mathcal E}}
\newcommand{\Ical}{{\mathcal I}}
\newcommand{\Pcal}{{\mathcal P}}
\newcommand{\Ebb}{\mathbb{E}}
\newcommand{\Ibb}{\mathbb{I}}
\newcommand{\Nbb}{\mathbb{N}}
\newcommand{\Pbb}{\mathbb{P}}
\newcommand{\Rbb}{\mathbb{R}}
\newcommand{\Lbar}{{\overline{L}}}
\newcommand{\pbar}{{\overline{p}}}
\newcommand{\tbar}{{\overline{t}}}
\newcommand{\xbar}{{\overline{x}}}
\newcommand{\alphabar}{{\overline{\alpha}}}
\newcommand{\mubar}{{\overline{\mu}}}
\newcommand{\epsilonbar}{{\overline{\epsilon}}}
\newcommand{\pbf}{{\bf{p}}}
\newcommand{\paren}[1]{\left(#1\right)}
\newcommand{\ecklam}[1]{\left[#1\right]}
\renewcommand{\equiv}{:=}
\newcommand{\Rn}{{\Rbb^n}}
\newcommand{\Rtw}{{\Rbb^2}}
\newcommand{\extre}{(-\infty,+\infty]}
\newcommand{\ucmin}[2]{\underset{#2}{\mbox{minimize}}~#1}
\newcommand{\set}[2]{\left\{#1\,\left|\,#2\right.\right\}}
\newcommand{\map}[3]{#1:\,#2\rightarrow #3\,}
\newcommand{\mmap}[3]{#1:\,#2\rightrightarrows #3\,}
\newcommand{\ip}[2]{\left\langle #1,~ #2\right\rangle}
\newcommand{\norm}[1]{\left\|#1\right\|}
\newcommand{\cpr}[2]{\ensuremath{\mathbb{P}\left(#1\,\middle|\,#2\right)}}
\newcommand{\und}{\quad\mbox{and}\quad}
\newcommand{\where}{\quad\mbox{ where }\quad}
\newcommand{\fur}{\mbox{ for }}
\DeclareMathOperator{\dd}{d}
\DeclareMathOperator{\inv}{inv}
\DeclareMathOperator{\Id}{Id}
\DeclareMathOperator{\prox}{prox}
\DeclareMathOperator{\argmin}{argmin\,}
\DeclareMathOperator{\crit}{crit\,}
\DeclareMathOperator{\Supp}{supp} 
\DeclareMathOperator{\Fix}{\mathsf{Fix}\,}
\DeclareMathOperator{\Find}{\mathsf{Find}\,}
\newcommand{\sd}{\partial}
\title{Convergence in Distribution of Randomized Algorithms:\\ The Case of Partially Separable Optimization}
\author{
{D. Russell Luke} 
\thanks{Institute for Numerical and Applied Mathematics,
    University of Goettingen,
    37083 Goettingen, Germany. DRL was supported in part by 
    the Deutsche Forschungsgemeinschaft (DFG, German Research Foundation) – Project-ID 432680300 – SFB 1456.
	\texttt{r.luke@math.uni-goettingen.de}; ORCID 0000-0002-4508-7360.}
}
\date{\today}
\begin{document}
 \maketitle

 \begin{abstract}
We present a Markov-chain analysis of blockwise-stochastic algorithms for solving partially
block-separable optimization problems.  Our main contributions to the extensive literature 
on these methods are statements about the  
Markov operators and distributions behind the iterates of stochastic algorithms, 
and in particular the regularity of Markov operators and rates of convergence of the 
{\em distributions} of the corresponding Markov chains.  This provides a detailed characterization 
of the moments of the sequences beyond just the expected behavior.  
This also serves as a case study of how randomization restores favorable 
properties to algorithms that iterations of only partial information destroys.  
We demonstrate this on stochastic blockwise implementations of the 
forward-backward and Douglas-Rachford algorithms for nonconvex 
(and, as a special case, convex), nonsmooth optimization.  
\end{abstract}

{\small \noindent {\bfseries 2010 Mathematics Subject Classification:}
  Primary 
  65C40, 
  90C06, 
  90C26;  
    Secondary  
    46N30, 
    60J05, 
    49M27, 
    65K05.\\ 
  }

\noindent {\bfseries Keywords:}
Nonconvex optimization, Large-scale optimization, Markov chain, Random function iteration,
Error bounds, Convergence rates

\section{Introduction}
We present a Markov-chain analysis of blockwise-stochastic algorithms for solving 
\begin{equation}\label{e:P1}
 \ucmin{f(x)+ \sum_{j=1}^m g_j(x)}{x\in\Ecal}.
\end{equation}
Here $\Ecal$ is a Euclidean space that is decomposed into a direct sum of the 
subspaces $\Ecal_j$, denoted $\Ecal=\bigoplus_{j=1}^m \Ecal_j$, 
and for each $j=1,2,\dots,m$, the function $f$ is continuously differentiable with 
blockwise-Lipschitz gradients, $g_i$ is everywhere subdifferentially regular 
(the regular and limiting subgradients coincide) and  
\begin{equation}\label{e:gi}
 g_j(x) = h_j(x_j)
\end{equation}
for $\map{h_j}{\Ecal_j}{\extre}$ subdifferentially regular. 
This represents a partially separable structured optimization problem.  

Problems with this structure are ubiquitous, and particular attention has focused on 
iterative algorithms for large-scale instances where the iterates are generated from 
only partial evaluation of the objective.  Which partial information to access in each iteration 
is randomly selected and computations can be done in parallel across distributed 
systems \cite{Wri15, RichTak14, Richtarik16, PesAud15}.  There is a rich literature on the 
analysis of these methods, focusing mainly
on deterministic properties of the objective function and expectations, iteration complexity, 
convergence of objective values, and acceleration strategies 
\cite{Nes12,RichTak14,fercoq2015accelerated, Pesquet19, ComPes15, LuXia15, NecCli16, Nedic2011, RichQu16}.  
Our own contributions 
to the literature on such stochastic 
methods has focused on a stochastic block-coordinate primal-dual method for the instance of 
\eqref{e:P1} where $f(x)$ is the indicator function of an affine subspace \cite{LukMal18}.
We will touch on primal-dual approaches via a stochastic blockwise  Douglas-Rachford Algorithm 
\ref{algo:sbdr}, but more practical primal-dual approaches to nonsmooth problems are not 
on the agenda of the present study.  

Our main contributions to the extensive literature on these methods are statements about the 
Markov operators and distributions behind the iterates of stochastic algorithms in the most complete sense 
possible.  By that we mean not only statements about the limits of the ergodic sequences,
which only tell one about the expectation, but rather the limiting distributions of the sequence
of measures behind the iterates, when viewed as a Markov chain (see Theorem \ref{t:msr convergence}
and Proposition \ref{t:convergence sbfb-sbdr}).  This allows one to access 
the moments of the limiting sequence, not just its mean.  

Getting a handle on the distributions behind iterates of randomized algorithms 
is significant not only for its generality, but also for the range of practical applications this 
encompasses.  To explain this we note that, in its most general form, 
{\em consistency} of the update functions generating the Markov operators is not assumed.  
In plain terms, the update functions in the Markov chain need not have common fixed points.
To see why this matters, it is first important to recognize that the literature on 
randomized algorithms is exclusively concerned with {\em almost sure} convergence.
In \cite[Proposition 2.5]{HerLukStu22a} it is shown that almost sure convergence of the 
iterates of such Markov chains can only happen when the update functions have common fixed points.  
Situations where the update functions do not have common fixed points are only 
a small perturbation away:  consider any fixed point iteration with numerical error.  
To be sure, the consistent case allows for tremendous simplifications, and we show this 
in sections \ref{s:consistent} and \ref{s:consistent2};  the point is, however, that 
our approach goes far beyond this idealized case.  

Previous work has established a foundation for 
this based on a fixed point theoretic approach 
\cite{ButnariuFlam95, Butnariu95, ButnariuCensorReich97, HerLukStu19a, HerLukStu22a, HerLukStu22b}.  
A different perspective, modeled after a more direct analysis of the {\em descent} properties
of algorithms in an optimization context has been established recently by Salzo and Villa 
\cite{salzo2021}.  This was further developed in the masters thesis 
of Kartamyschew \cite{Kartamyschew}.  In the present work we extend the results of 
\cite{Kartamyschew} to a fully nonconvex setting for more general mappings.  

A noteworthy feature of blockwise methods, and what distinguishes the present study from 
\cite{HerLukStu19a, HerLukStu22a, HerLukStu22b} is that, even when the objective in 
\eqref{e:P1} is convex, blockwise algorithms
do {\em not} satisfy the usual regularity properties enjoyed by convex optimization algorithms that 
lead generically to global convergence.  This is demonstrated in Example \ref{eg:T_i not afne}.  
The stochastic implementations for convex problems, however, 
{\em do} enjoy nice properties {\em in expectation} (see Theorem \ref{t:afneie}), 
and this is enough to guarantee generic global 
convergence (Theorem \ref{t:msr convergence}, Proposition \ref{t:convergence sbfb-sbdr}).  
While this fact lies implicitly behind the convergence analysis 
of, for instance, \cite{LukMal18} and many others, it was recognized in \cite{salzo2021} as the 
important property of {\em descent in expectation}. We place these observations in the 
context of Markov operators with update functions that satisfy desirable properties 
in expectation (see Theorem \ref{t:harvest1}).  These notions, at the level of the Markov
operator, have already been defined in \cite{HerLukStu19a, HerLukStu22a, HerLukStu22b};  the 
convergence results presented in those works, however, are based on the assumption that 
each of the update functions that generate the Markov operator have the same {\em class} of 
regularity that they have in expectation.  Blockwise algorithms for partially 
separable optimization do not enjoy this structure, and therefore many of the results of 
\cite{HerLukStu19a, HerLukStu22a, HerLukStu22b} do not immediately apply;  indeed, 
we conjecture that some of the stronger convergence results of \cite{HerLukStu19a, HerLukStu22a}
are not true without additional compactness assumptions, hence our analogous global 
convergence statement for the convex case Proposition \ref{t:asymp reg}, is weaker  
than its counterparts \cite[Theorem 3.6]{HerLukStu19a} or 
\cite[Theorem 2.9]{HerLukStu22a}.  

The basic machinery of stochastic blockwise  function iterations (Algorithm \ref{algo:sbi}) 
and Markov chains is reviewed 
in section \ref{s:Markov}.  In section \ref{s:regularity} we review and establish
the chain of regularity lifted from the regularity of the individual mappings
on the sample space, Theorem \ref{t:afneie}, to the regularity of 
the corresponding Markov operators on the space of probability measures, Theorem \ref{t:harvest1}.
In section \ref{s:consistent} the special case of consistent stochastic feasibility is 
detailed, showing in particular how the abstract objects for the general case simplify
(see Theorem \ref{t:invMeasforParacontra}). In section \ref{s:convergence} we present 
abstract convergence results, with and without rates (Proposition \ref{t:asymp reg}
and Theorem \ref{t:msr convergence}).  The key to quantitative results in the space of 
probability measures is {\em metric subregularity} of the {\em invariant Markov transport discrepancy}
\eqref{eq:Psi}. This is shown in the case of consistent stochastic feasibility to be {\em necessary}
for quantitative convergence of {\em paracontractive} Markov operators in Theorem \ref{t:msr necessary}.

We return
to the specialization of stochastic partial blockwise splitting algorithms 
in section \ref{s:sbs}, where we develop a case study of stochastic blockwise forward-backward
splitting (Algorithm \ref{algo:sblfb}) and 
stochastic blockwise Douglas-Rachford (Algorithm \ref{algo:sbdr}), establishing the 
regularity of the corresponding fixed point operators (Propositions \ref{t:f_j}-\ref{t:fb-dr aafne})
and convergence in distribution of the corresponding Markov chains (Proposition \ref{t:convergence sbfb-sbdr}).  

\section{Notation and Random Function Iterations}\label{s:Markov}
As usual, $\mathbb{N}$ denotes the natural
numbers including $0$.  
We denote by $\mathscr{P}(G)$ the set of all
probability measures on $G\subset \Ecal$; the measurable 
sets are given by the 
Borel sigma algebra on a subset  $G\subset\Ecal$, denoted by $\mathcal{B}(G)$.     
The notation $X \sim \mu\in \mathscr{P}(G)$ means that the law
 of $X$, denoted $\mathcal{L}(X)$, satisfies 
 $\mathcal{L}(X)\equiv\mathbb{P}^{X} :=\mathbb{P}(X \in \cdot) = \mu$, 
 where $\mathbb{P}$ is the probability
 measure on some underlying probability space.
The open ball centered at $x\in\Ecal$ with radius $r>0$ is denoted $\mathbb{B}(x,r)$;
 the closure of the ball is denoted
$\overline{\mathbb{B}}(x,r)$.  
The distance of a point $x\in \Ecal$ to a set 
$A\subset \Ecal$ in the metric $d$ is denoted by $d(x,A)\equiv \inf_{w\in A}d(x,w)$. 
The {\em projector} onto a set $A$ is denoted by $P_A$ and $P_A(x)$ is the set of 
all points where $d(x,A)$ is attained.  This is empty if $A$ is open, and a singleton 
if $A$ is closed and convex; generically, 
$P_A$ is a (possibly empty) set-valued mapping, 
for which we use the notation $\mmap{P_A}{\Ecal}{\Ecal}$.  
For the ball of radius $r$ around a
subset of points $A\subset \Ecal$, we write
$\mathbb{B}(A,r)\equiv \bigcup_{x\in A} \mathbb{B}(x,r)$.  

Let $\Ibb$ denote an index set, each element $i\in \Ibb$ of which 
is a unique assignment to nonempty subsets
of $\{1,2,\dots,m\}$:  $M_i\in 2^{\{1,2,\dots,m\}}\setminus\emptyset$ for $i\in\Ibb$, 
where $\cup_{i\in\Ibb}M_i = 2^{\{1,2,\dots,m\}}\setminus\emptyset$ and $M_i\neq M_j$
for $i\neq j$.  For convenience we will let the first such subset be the set itself:
$M_1\equiv \{1,2,\dots,m\}$. 
For $i\in \Ibb$ we denote the subspace $\Ecal_{M_i}\equiv \bigoplus_{j\in M_i}\Ecal_j$ where 
$\{\Ecal_1, \dots,\Ecal_m\}$ is a collection of mutually orthogonal subspaces of $\Ecal$.  
The complement to this space in $\Ecal$ is denoted $\Ecal_{M_i}^\circ\equiv \Ecal\setminus \Ecal_{M_i}$;  
likewise, denote the complement to the 
subset $M_i$ in $\{1,2,\dots,m\}$ by  
$M_i^\circ = \{1,2,\dots,m\}\setminus M_i$. 
The {\em affine} embedding of the subspace $\Ecal_{M_i}$ in $\Ecal$ at a 
point $z\in \Ecal$ is denoted $\Ecal_{M_i}\bigoplus \{z\}$;  the 
canonical embedding of $\Ecal_{M_i}$ in $\Ecal$ is thus $\Ecal_{M_i}\bigoplus \{0\}$ where it is 
understood that $0\in\Ecal$.  We use the corresponding notation for subsets $G\subset\Ecal$:  
$G_j\subset\Ecal_j$ and the affine embedding of a subset $G_{M_i}$ at a point 
$z\in G_{M_i^\circ}$ is given by $G_{M_i}\bigoplus\{z\}_{M_i^\circ}$.  
The blockwise mappings $\map{T_i}{\Ecal}{\Ecal}$ corresponding to this structure are defined by  
\begin{equation}\label{e:T blockwise}
[T_{i}(x)]_j\equiv 
                \begin{cases}
                    T'_j(x), & j\in M_{i},\\
                    x_j&\mbox{ else,}
                \end{cases}
\quad \fur\quad  
\map{T'_j}{\Ecal}{\Ecal_j}, \quad j=1,2,\dots,m. 
\end{equation}
Note that $T'_j$  is some action with respect to the $j$'th block 
in $\Ecal_j$, though with input from $x\in\Ecal$.

The measure space of indexes is denoted $(\Ibb, \Ical)$ and $\xi$ is an $\Ibb$-valued random variable
on a probability space. 
The random variables $\xi_k$ in the sequence
$(\xi_{k})_{k\in\mathbb{N}}$ (abbreviated $(\xi_{k})$) 
are independent and identically
distributed (i.i.d.) \ with $\xi_{k}$ distributed as
$\xi$ ($\xi_k\sim \xi$).   
At each iteration $k$ of the algorithm one selects at 
random a nonempty subset of blocks $M_{\xi_{k}}\subset\{1,2,\dots,m\}$ and performs an 
update to each block as follows:
\begin{algorithm}
\SetKwInOut{Output}{Initialization}
  \Output{Select a random variable $X_0$ with distribution $\mu$,  $t=(t_1, t_2, \dots,t_m)>0$, 
   and $(\xi_{k})_{k\in\Nbb}$ 
   an i.i.d.\ sequence with values on $\Ibb$ and  
   $X_0$ and  $(\xi_{k})$ independently distributed. Given $\map{T'_j}{\Ecal}{\Ecal_j}$
   for $j=1,2,\dots,m$.}
    \For{$k=0,1,2,\ldots$}{
            { 
            \begin{equation}\label{eq:spcd}
                X^{k+1}= T_{\xi_{k}}(X^{k})\where [T_{\xi_{k}}(X^k)]_j\equiv 
                \begin{cases}
                    T'_j(X^k), & j\in M_{\xi_{k}},\\
                    X_j^k&\mbox{ else}
                \end{cases}.
            \end{equation}
            }\\
    }
  \caption{Stochastic Block Iteration (SBI)}\label{algo:sbi}
\end{algorithm}

This is a special instance of a {\em random function iteration} studied in 
\cite{HerLukStu19a, HerLukStu22a, HerLukStu22b}.
Convergence of such an iteration is understood in the sense of distributions and is a consequence 
of two key properties:  that the mapping $T_i$ is almost 
{\em $\alpha$-firmly nonexpansive (abbreviated a$\alpha$-fne) 
in expectation} (\eqref{eq:paafne i.e.} and \eqref{eq:Phi aafneie}), 
and that the {\em invariant Markov transport discrepancy} defined in \eqref{eq:Psi} is 
{\em gauge metrically subregular} \eqref{e:metricregularity} at invariant measures.  
The latter of these two properties has been shown in many settings to be necessary for quantitative 
convergence of the iterates \cite{HerLukStu19a, LukTebTha18}.  
The first property, with the qualifier ``almost'' removed, 
is enough to guarantee that the sequence of 
measures is {\em asymptotically regular} with respect to the {\em Wasserstein metric}.  All this 
is formally defined below.

\subsection{Markov chains, measure-valued mappings, and stochastic fixed point problems}
\label{sec:SPMasMC}

The following assumptions hold throughout.
\begin{assumption}\label{ass:1}
  \begin{enumerate}[(a)]
  \item\label{item:ass1:indep} $\xi_{0},\xi_{1}, \ldots, \xi_{k}$
    are i.i.d random variables for all $k\in\Nbb$ on a probability space 
    with values on $\Ibb$.  The variable $X_0$ is an 
    random variable with values on $\Ecal$, independent from $\xi_k$. 
  \item\label{item:ass1:Phi} The function 
  $\map{\Phi}{\Ecal\times \Ibb}{\Ecal}$, $(x,i)\mapsto T_{i}x$ is measurable.
  \end{enumerate}
\end{assumption}

Let $(X_{k})_{k \in \mathbb{N}}$ be a sequence of random variables with values on $G\subset \Ecal$. 
  Recall that a Markov chain  with {\em transition kernel} $p$ satisfies 
  \begin{enumerate}[(i)]
  \item $\cpr{X_{k+1} \in A}{X_{0}, X_{1}, \ldots, X_{k}} =
    \cpr{X_{k+1} \in A}{X_{k}}$;
  \item $\cpr{X_{k+1} \in A}{X_{k}} = p(X_{k},A)$
  \end{enumerate}
for all $k \in
  \mathbb{N}$ and $A \in \mathcal{B}(G)$ almost surely in probability,  
  $\mathbb{P}$-a.s.  In \cite{HerLukStu22a} it is shown that the sequence 
  of random variables $(X_k)$ generated 
by Algorithm \ref{algo:sbi} is a Markov chain with transition kernel  $p$ given by  
\begin{equation}\label{eq:trans kernel}
  (x\in G) (A\in
  \mathcal{B}(G)) \qquad p(x,A) \equiv 
  \mathbb{P}(T_{\xi}x \in A)
\end{equation}
for the measurable \emph{update function} $\map{\Phi}{G\times  \Ibb}{G}$ given by 
$\Phi(x,i)\equiv T_{i}x$.  

The Markov operator $\mathcal{P}$ associated with this Markov chain 
is defined pointwise for a measurable 
function 
$\map{f}{G}{\mathbb{R}}$ via
\begin{align*}
  (x\in G)\qquad \mathcal{P}f(x):= \int_{G} f(y) p(x,\dd{y}),
\end{align*}
when the integral exists. Note that
\begin{align*}
  \mathcal{P}f(x) =  \int_{\Omega}
  f(T_{\xi(\omega)}x) \mathbb{P}(\dd{\omega})= \int_{\Ibb}
  f(T_{i}x) \mathbb{P}^{\xi}(\dd{i}).
\end{align*}

Let $\mu\in \mathscr{P}(G)$.  The
dual Markov operator acting on a measure $\mu$ is indicated by action on 
the right by $\mathcal{P}$:
\begin{align*}
  (A \in \mathcal{B}(G))\qquad (\mathcal{P}^{*}\mu) (A):=
  (\mu\mathcal{P}) (A) := \int_{G} p(x,A) \mu(\dd{x}).
\end{align*}
The distribution of the $k$'th iterate of the Markov chain generated
by Algorithm \ref{algo:sbi} is therefore easily represented as follows: $\mathcal{L}(X_{k}) =
\mu_0 \mathcal{P}^{k}$, where $\mathcal{L}(X)$ denotes the law of the random
variable $X$.  Of course in general random variables do not converge, but distributions associated with 
the sequence of random variables $(X_k)$ of Algorithm \ref{algo:sbi}, if they converge to 
anything, do so to {\em invariant measures} of the associated Markov operator. 
  An invariant measure of the Markov operator $\mathcal{P}$ is any distribution $\pi\in \mathscr{P}$
  that satisfies $\pi \mathcal{P} = \pi$.  The
set of all invariant probability measures is denoted by 
$\inv \mathcal{P}$.  The underlying problem we seek to solve is to 
\begin{align}
  \label{eq:stoch_fix_probl}
  \mbox{Find}\qquad \pi\in\inv\mathcal{P}.
\end{align}
This is the {\em stochastic fixed
  point problem} studied in \cite{HerLukStu22a, HerLukStu22b}.  
  When the mappings $T_i$ have common fixed points, the problem reduces to the 
  {\em stochastic feasibility} problem studied in \cite{HerLukStu19a}.  

Let $(\nu_{k})$ be a sequence of probability measures on $G\subset\Ecal$, and let  
$C_{b}(G)$ denote the set of bounded and continuous
functions from $G$ to $\mathbb{R}$.  The sequence 
$(\nu_{k})$ is said to converge in distribution to $\nu$ whenever $\nu \in
\mathscr{P}(G)$ and for all $f \in C_{b}(G)$ it holds that $\nu_{k} f
\to \nu f$ as $k \to \infty$, where $\nu f := \int f(x) \nu(\dd{x})$. 
In other words,  a sequence of random variables $(X_{k})$ converges in 
distribution if their laws $(\mathcal{L}(X_{k}))$ do.  We use the {\em weighted Wasserstein metric}
for the space of measures. Let 
  \begin{equation}\label{eq:p-probabiliy measures}
       \mathscr{P}_{2}(G) = \set{\mu \in \mathscr{P}(G)}{ \exists\, x
      \in G \,:\, \int \|x-y\|_\pbf^2 \mu(\dd{y}) < \infty}
  \end{equation}
  where $\|\cdot\|_\pbf$ is the Euclidean norm weighted by $\pbf$.  
  This will be made explicit below.  
  The Wasserstein $2$-metric on $\mathscr{P}_{2}(G)$, with respect to 
  the weighted Euclidean norm $\|\cdot\|_\pbf$ denoted $d_{W_{2,\pbf}}$, is defined by 
 \begin{equation}\label{eq:Wasserstein}
  d_{W_{2,\pbf}}(\mu, \nu)\equiv \paren{\inf_{\gamma\in \Ccal(\mu, \nu)}\int_{G\times G} 
\|x-y\|_\pbf^2\gamma(dx, dy)}^{1/2}
 \end{equation}
 where $\Ccal(\mu, \nu)$ is the set of couplings of $\mu$ and $\nu$:
   \begin{align}
    \label{eq:couplingsDef}
    \Ccal(\mu,\nu) := \set{\gamma \in \mathscr{P}(G\times G)}{ \gamma(A
      \times G) = \mu(A), \, \gamma(G\times A) = \nu(A) \quad \forall A
      \in \mathcal{B}(G)}.
  \end{align}

The principle mode of convergence in distribution that we use is convergence in distribution 
of the sequence 
  $(\mathcal{L}(X_{k}))$ to a probability measure $\pi \in \mathscr{P}(G)$, i.e.\
  for any $f \in C_{b}(G)$
  \begin{align*}
    \mathcal{L}(X_{k})f = \mathbb{E}[f(X_{k})] \to \pi f, 
    \qquad \text{as } k \to \infty.
  \end{align*}
This is a stronger form of convergence than convergence of Ces\`aro averages sometimes seen
in the literature. Since we are considering the The Wasserstein $2$-metric, 
convergence in this metric implies that also the second moments converge in this metric.  
For more background on the analysis of sequences of measures we refer 
interested readers to 
\cite{Billingsley, stroock2010probability, Villani2008, Szarek2006, Hairer2021}.   

\subsection{Stochastic blockwise splitting algorithms}
The concrete targets of the analysis presented here are 
two fundamental templates for solving problems of the form \eqref{e:P1}, 
forward-backward splitting as formulated in \cite{salzo2021} and 
Douglas-Rachford splitting; the latter has not been studied in this context. 

Denote by $\mmap{\partial_{x_j} f}{\Ecal}{\Ecal_j}$ the partial
limiting subdifferential of $f$ with respect to the block $x_j\in \Ecal_j$:
\begin{equation}\label{eq:psd}
 \partial_{x_j} f(\xbar)\equiv\set{v\in\Ecal_j}{ f(x)\geq f(\xbar)+
 \ip{v\bigoplus\{0\}}{x-\xbar} + o\{\|x-\xbar\|\}, 
 x\in \Ecal_j\bigoplus\{\xbar\}}.
\end{equation}
When $f$ is continuously differentiable, then this coincides with the partial gradient 
$\map{\nabla_{x_j}f}{\Ecal}{\Ecal_j}$.  
The prox mapping of a function 
$\mmap{h}{\Ecal}{\extre}$ is defined by
\begin{align}\label{e:prox}
\prox_{h, \lambda} (x) \coloneqq \argmin_{y \in \Ecal} \left \{ h(y)+ \frac{1}{2 \lambda} \norm{y - x}^2\right \}. 
\end{align}
The prox mapping is nonempty and single-valued whenever $h$ is proper, lsc and convex \cite{Moreau65}. To 
allow for generalization to {\em nonconvex} functions we use instead the {\em resolvent} 
$\mmap{J_{\partial h, \lambda}}{\Ecal}{\Ecal}$:
\begin{align}\label{e:resolvent}
J_{\partial h, \lambda} (x) \coloneqq \set{y \in \Ecal}{\paren{\lambda\sd h + \Id}(y) \ni x}.
\end{align}
It is clear from this that, in general, $\prox_{h,\lambda}(x)\subset J_{\partial h, \lambda}(x)$ for all $x$. 

Note that $g_j$ defined in \eqref{e:gi} is just the extension by zero of $h_j$ to a mapping on $\Ecal$.  This yields
\begin{equation}
 J_{\partial g_j, \lambda_j}(x) = \paren{\begin{array}{c}
                              x_1\\x_2\\\vdots\\x_{j-1}\\J_{\partial h_j, \lambda_j}(x_j)\\
                              x_{j+1}\\\vdots\\x_m
                             \end{array}}
                             \quad\mbox{ and }\quad
 \paren{J_{\partial g_j, \lambda_j} - \Id}(x) = 
 \paren{J_{\partial h_j, \lambda_j}(x_j) - x_j}\bigoplus\{0\}.
\end{equation}
Let $\map{\sd_j f}{\Ecal}{\Ecal}$ denote the 
canonical embedding of $\sd_{x_j} f$ by zero into $\Ecal$: 
\begin{equation}\label{e:nabla_j}
 \sd_j f(x) \equiv \sd_{x_j}f(x)\bigoplus\{0\}.
\end{equation}
The corresponding resolvent,
$J_{\sd_j f, \lambda}(x)$  is given by
\begin{equation}
 J_{\sd_j f, \lambda}(x) \equiv \paren{\begin{array}{c}
                              x_1\\x_2\\\vdots\\x_{j-1}\\J_{\sd f_j(\cdot; x), \lambda}(x_j)\\
                              x_{j+1}\\\vdots\\x_m
                             \end{array}}
\end{equation}
where 
$\map{f_j(\cdot; x)}{\Ecal_j}{\Rbb}$, with $x\in\Ecal$ a {\em parameter}, denotes
\begin{subequations}\label{e:f_j}
\begin{equation}
f_j(y;x) \equiv  
  f\paren{x+(y-x_j)\bigoplus \{0\}}
\end{equation}
so that   
\begin{eqnarray}
\sd f_j(y;x) &=& \sd_{x_j} f\paren{x+(y-x_j)\bigoplus \{0\}}\und\label{e:nabla f_j}\\ 
J_{\sd f_j(\cdot; x), \lambda}(x_j)&=& \set{y\in\Ecal_j}{y+\sd_{x_j} f(x+(y-x_j)\bigoplus\{0\}) \ni x_j}.
\label{e:J nabla f_j}
\end{eqnarray}
\end{subequations}
We recognize that the resolvent of a function that is not fully separable is not considered
{\em prox friendly} from a computational standpoint, and this can only be evaluated 
numerically with some error.  The framework presented here is well suited for algorithms 
with numerical error, and this is discussed at some length in 
\cite[Section 4]{HerLukStu22a}.  In the interest of keeping the presentation simple, we 
present results for exact evaluation of all the relevant operators;  the incorporation of 
appropriate noise models for inexact computation builds on the structure introduced here and 
does not require any assumption of summable errors or increasing accuracy, 
though the noise model does require some careful consideration
(see \cite[Section 4.3]{HerLukStu22a}).  

The abstract template is Algorithm \ref{algo:sbi} where the mappings $T'_j$ 
specialize to 
\[ T'_j(x) \equiv
                \frac{1}{2}\paren{R_{\sd f_j(\cdot;y), t_j}R_{\partial h_j, t_j}(x_j) + x_j}, \quad
                y = R_{\partial g_j, t_j}(x)
                \qquad (\mbox{blockwise Douglas-Rachford})
\]
where 
\[ R_{\partial h_j, t_j}(x_j) = 2 J_{\partial h_j, t_j}(x_j)-x_j\und  R_{\sd f_j(\cdot;x), t_j}(x_j) 
\equiv 2J_{\sd f_j(\cdot;x)}(x_j)-x_j. 
\]
or, when $f$ is continuously differentiable,
\[
T'_j(x) \equiv
                J_{\partial h_j, t_j}\paren{x_j - t_j\nabla_{x_j} f(x)} \qquad (\mbox{blockwise forward-backward}).
\]
Using the resolvent instead of the prox mapping, the blockwise forward-backward algorithm studied in 
\cite{salzo2021} consists of iterations of randomly selected mappings $\map{T^{FB}_i}{\Ecal}{\Ecal}$:
\begin{equation}\label{e:TFB_i}
 T^{FB}_{i}\equiv 
                \paren{\Id + \sum_{j\in M_{i}}\paren{J_{\partial g_{j}, t_j}\paren{\Id-{t_j}\nabla_j f} - \Id}}
                \quad (i\in \Ibb).
\end{equation}
\begin{algorithm}
\SetKwInOut{Output}{Initialization}
  \Output{Select a random variable $X_0$ with distribution $\mu$,  $t=(t_1, t_2, \dots,t_m)>0$, 
   and $(\xi_{k})_{k\in\Nbb}$ 
   an i.i.d.\ sequence with values on $\Ibb$ and  
   $X_0$ and  $(\xi_{k})$ independently distributed. }
    \For{$k=0,1,2,\ldots$}{
            {
            \begin{subequations}\label{eq:sbfb}
            \begin{equation}\label{eq:sbfb_0}
                X^{k+1}= T^{FB}_{\xi_{k}}(X^k)\equiv 
                \paren{\Id + \sum_{j\in M_{\xi_{k}}}\paren{J_{\partial g_{j}, t_j}\paren{\Id-{t_j}\nabla_j f} - \Id}}(X^{k}),
            \end{equation}
            or equivalently\\
            \For{$j=0,1,2,\ldots,m$}{
            \begin{equation}\label{eq:sbfb_i}
                X_j^{k+1}= [T^{FB}_{\xi_{k}}(X^k)]_j\equiv 
                \begin{cases}
                    J_{\partial h_{j}, t_j}\paren{X^k_j-{t_j}\nabla_{x_j} f(X^k)}
                      & \mbox{ if } j\in M_{\xi_{k}}\\
                    X^k_j&\mbox{ else }.
                \end{cases}
            \end{equation}}
            \end{subequations}
            }
    }
  \caption{Stochastic Blockwise Forward-Backward Splitting (S-BFBS)}\label{algo:sblfb}
\end{algorithm}

The blockwise Douglas-Rachford algorithm 
consists of iterations of randomly selected mappings $\map{T^{DR}_i}{\Ecal}{\Ecal}$:
\begin{equation}\label{e:TDR_i}
 T^{DR}_{i}\equiv 
                \frac{1}{2}\paren{\sum_{j\in M_{i}}\paren{R_{\sd_j f, t_j} 
                R_{\partial g_j,t_j} - \Id} + 2\Id}
                \quad (i\in \Ibb).
\end{equation}
\begin{algorithm}
\SetKwInOut{Output}{Initialization}
  \Output{Select a random variable $X_0$ with distribution $\mu$,  $t=(t_1, t_2, \dots,t_m)>0$, 
   and $(\xi_{k})_{k\in\Nbb}$ 
   an i.i.d.\ sequence with values on $\Ibb$ and  
   $X_0$ and  $(\xi_{k})$ independently distributed. }
    \For{$k=0,1,2,\ldots$}{
            { 
            \begin{subequations}\label{eq:sbdr}
            \begin{equation}\label{eq:sbdr0}
                X^{k+1}= T^{DR}_{\xi_{k}}X^k\equiv 
                \frac{1}{2}\paren{\sum_{j\in M_{\xi_{k}}}\paren{R_{\sd_j f,t_j}R_{\partial g_{j}, t_j} 
                 - \Id} + 2\Id}(X^{k})
            \end{equation}
            or equivalently\\
            \For{$j=0,1,2,\ldots,m$}{
            \begin{equation}\label{eq:sbdr_i}
                X_j^{k+1}= [T^{DR}_{\xi_{k}}(X^k)]_j\equiv 
                \begin{cases}
                    \frac{1}{2}\paren{R_{\sd_j f,t_j}R_{\partial h_{j}, t_j}(X^k_j) + X^k_j}
                      & \mbox{ if } j\in M_{\xi_{k}}\\
                    X^k_j&\mbox{ else }.
                \end{cases}
            \end{equation}}
            \end{subequations}
            }
    }
  \caption{Stochastic Blockwise Douglas-Rachford Splitting (S-BDRS)}\label{algo:sbdr}
\end{algorithm}
In addition to its own merits, in the convex setting 
the Douglas-Rachford algorithm has the interpretation 
as the ADMM algorithm \cite{Glowinski75} applied to the ``pre-primal'' problem to \eqref{e:P1}
\cite{Gabay83, Eckstein}: 
\begin{equation}\label{eq:pre primal}
 \ucmin{p(x)+q(Ax)}{x\in\Rn}\quad  \where p^*(-A^Tx) = f(x)\und g(x) = q^*(x).
\end{equation}
The stochastic blockwise Douglas-Rachford Algorithm \ref{algo:sbdr} therefore can be understood 
as a stochastic blockwise ADMM algorithm for solving \eqref{eq:pre primal}.  The discussion above 
about the separability of $f$ is yet another way of understanding the observed computational 
difficulty of implementing this algorithm; it is quite unlikely that $f$ given by 
\eqref{eq:pre primal} will be separable in the standard basis and therefore the 
resolvent \eqref{e:J nabla f_j} will have to be computed numerically.  
Alternative primal-dual methods that circumvent this are the topic of future research.  

Before we begin, however, it will be helpful to give an example delineating consistent from 
inconsistent feasibility. 

\begin{eg}[consistent/inconsistent stochastic feasibility problems]
Examples for partially separable optimization and blockwise algorithms abound, particularly 
in machine learning, but seldom is the distinction made between consistent and inconsistent 
problems.  This is illustrated here for the problem of set feasibility, or, when
feasible points don't exist, best approximation.  Consider the problem 
\[
 \Find \xbar\in \cap_{j=1}^m\Omega
\]
where $\Omega_j\subset\Rn$ are closed sets.  This can be recast as the following optimization problem 
on the product space $(\Rn)^m$:
\begin{equation}\label{e:P1 feas}
 \mbox{minimize } f(x)+\sum_{j=1}^m \iota_{\Omega_j}(x_j)
\end{equation}
where $x_j\in\Rn$, 
\[
\iota_\Omega(x) = \begin{cases} 0&\mbox{ when }x\in\Omega\\ +\infty&\mbox{ else,}\end{cases} 
\]
and $f$ is some reasonable coupling function that promotes similarity between the blocks $x_j$.  
In the context of problem \eqref{e:P1} 
$h_j = \iota_{\Omega_j}$.  Common instances of the coupling function  
are $f(x) =\tfrac12 d(x, D)^2$ for 
$D\equiv\set{x=(x_1,x_2, \dots, x_m)}{x_i=x_j~\forall i\neq j}$ or the more strict 
indicator function $f(x)=\iota_D(x)$.  The prox operators associated with the indicator 
functions are just projectors, while the gradient of the function $f$ in the smooth case
can be constructed from the projection onto $D$ (just the averaging operator).

When $\cap_{j=1}^m\Omega\neq\emptyset$, the solutions 
to the feasibility problem and problem \eqref{e:P1 feas} coincide for both instances of $f$. 
In this case the blockwise operators $T_i$ in \eqref{algo:sblfb} and \eqref{algo:sbdr} 
have common fixed points, which are (perhaps not exclusively) points where the sets intersect,  
and so, when all goes well, fixed points of these algorithms coincide with points in $\cap_{j=1}^m\Omega$; 
at the very least fixed points of the algorithms coincide with {\em critical points}.  
Viewed as random function iterations, 
the iterates of \eqref{algo:sblfb} and \eqref{algo:sbdr} in this consistent case are random variables 
whose distributions converge to delta functions with support in $\cap_{j=1}^m\Omega$ and the algorithms 
converge to solutions of a {\em stochastic feasibility problem} studied in \cite{HerLukStu19a}:
\[
 \Find \xbar\in \set{x}{\mathbb{P}(x\in\Fix T_\xi)=1}.
\]

If the intersection is empty, as will often be the case in practice regardless of noise considerations, 
then it is easy to see that the blockwise operators in \eqref{algo:sblfb} and \eqref{algo:sbdr} 
do not have common fixed points when $f(x)=\iota_D(x)$.  The random algorithms do not 
have fixed points in this case, but viewed as random function iterations, the distributions of the 
iterates converge to invariant measures of the Markov operator corresponding to either Algorithm 
\eqref{algo:sblfb} or \eqref{algo:sbdr}.  These algorithms therefore find solutions to the more general 
stochastic fixed point problem \ref{eq:stoch_fix_probl} studied in \cite{HerLukStu22a}.
How to interpret such invariant measures is an open issue in general.  For this example,
in the case of just two convex sets with
empty intersection, the invariant probability measures will consist of 
equally weighted pairs of delta functions centered at best approximation pairs between the sets.

The numerical 
behavior of deterministic 
versions of \eqref{algo:sblfb} and \eqref{algo:sbdr}, and many others, has been 
thoroughly studied for the broad class of {\em cone and sphere problems}, which includes
sensor localization, phase retrieval, and computed tomography \cite{LukSabTeb18}.  
For the example of set feasibility presented here, convergence depends on the regularity
properties of the projectors onto the respective sets, which as shown in \cite{LukNguTam18}
is derived from the regularity of the sets.  The main contribution of this article is to 
show that randomization can lead to Markov operators with better regularity than that of the 
individual operators generating its transition kernel.  
\end{eg}

\section{Regularity}\label{s:regularity}
Our main results concern convergence of Markov chains under 
regularity assumptions that are lifted from the generating mappings $T_i$.  In 
\cite{LukNguTam18} a framework was developed for a quantitative 
convergence analysis of set-valued mappings $T_{i}$ that are one-sided Lipschitz 
continuous in the sense of set-valued-mappings 
with Lipschitz constant slightly greater than 1.  We begin with the regularity of 
$T_{i}$ and follow this through to the regularity of the resulting Markov operator.

\subsection{Almost $\alpha$-firmly nonexpansive mappings}
\label{s:aafneie}
Let $G\subset \Ecal$ and let 
$\mmap{F}{G}{\Ecal}$.
 The mapping $F$  is said to be \emph{pointwise  
  almost nonexpansive at $x_0\in G$ on $G$}
whenever  
\begin{equation}\label{eq:pane}
\exists \epsilon\in[0,1):\quad  \|x^+ - x_0^+\| \le \sqrt{1+\epsilon}\,\|x - x_0\|, 
\qquad \forall x \in G, \forall x^+\in Fx, x_0^+\in Fx_0.
\end{equation}
The {\em violation} is a value of $\epsilon$ for which \eqref{eq:pane} holds.
    When the above inequality holds for all $x_0\in G$ then $F$ is said to be 
    {\em almost nonexpansive on $G$}.  When $\epsilon=0$ the 
    mapping $F$ is said to be 
    {\em (pointwise) nonexpansive}.  
 The mapping $F$ is said to be {\em pointwise almost $\alpha$-firmly 
nonexpansive  at $x_0\in G$ on $G$}, abbreviated {\em pointwise a$\alpha$-fne} 
whenever
\begin{eqnarray}
&&
	\exists \epsilon\in[0,1)\mbox{ and }\alpha\in (0,1):\nonumber\\
	  \label{eq:paafne}&&	\quad	\|x^+ - x_0^+\|^2 \le (1+\epsilon) \|x - x_0\|^2 - 
      \tfrac{1-\alpha}{\alpha}\psi(x,x_0, x^+, x_0^+)\\
&&      \qquad\qquad\qquad\qquad\qquad\quad \forall x \in G, \forall x^+\in Fx, \forall x_0^+\in Fx_0,
		\nonumber
\end{eqnarray}
where the {\em transport discrepancy} $\psi$ of  $F$ at $x, x_0$, $x^+\in Fx$ and $x_0^+\in Fx_0$
is defined by  
\begin{eqnarray}
&&\!\!\!\!\!\!\!\!\psi(x,x_0, x^+, x_0^+)\equiv \nonumber\\
\label{eq:psi}
&&\!\!\!\!\!\!\!\! \|x^+- x\|^2+ \|x_0^+ - x_0\|^2 + \|x^+ - x_0^+\|^2 + 
\|x - x_0\|^2  - \|x^+ - x_0\|^2   - \|x - x_0^+\|^2.
\end{eqnarray}
When the above inequality holds for all $x_0\in G$ then $F$ is said to be 
{\em a$\alpha$-fne on $G$}. 
The {\em violation} is the constant  
$\epsilon$ for which \eqref{eq:paafne} holds.
When $\epsilon=0$ the mapping $F$ is said to be 
    {\em (pointwise) $\alpha$-firmly nonexpansive}, abbreviated {\em (pointwise) $\alpha$-fne}.  

The transport discrepancy $\psi$ is a central object for characterizing the regularity of mappings 
in metric spaces and ties the regularity of the mapping to the geometry of the space.
A short calculation shows that, in a Euclidean space, this has 
the representation 
\begin{equation} \label{eq:nice ineq}
\psi(x,x_0, x^+, x_0^+) =  \|(x-x^+)-(x_0-x_0^+)\|^2.
\end{equation}

The definition of pointwise a$\alpha$-fne
mappings in Euclidean spaces appeared first in \cite{LukNguTam18}.
This generalizes the notion of 
{\em averaged} mappings dating back to Mann, Krasnoselskii, and others
\cite{mann1953mean, krasnoselski1955, edelstein1966, BruckReich77,BaiBruRei78}.

A partial blockwise mapping $T_i$ that is $\alpha$-fne on an affine subspace
$\Ecal_{M_i}\bigoplus \{z\}$ may not be $\alpha$-fne on $\Ecal$, as the 
next example from \cite[Remark 3.9]{Kartamyschew} shows. 
\begin{eg}\label{eg:T_i not afne} 
Let $\Ecal = \Rtw$ and define $f(x_1, x_2) = (x_1+x_2)^2$, $g_1(x_1,x_2)=h_1(x_1) = 0$
and $g_2(x_1, x_2) = h_2(x_2)= x_2^2$.  Here $f$ is convex and differentiable with global 
gradient Lipschitz constant $L=4$ and the functions $g_j$ are clearly convex. The proximal 
gradient algorithm applied to the function $F = f +\sum_{j=1}^2 g_j$ is 
$x^{k+1}=T^{FB}(x^k)=\prox_g(\Id- t\nabla f)(x^k)$.
For all $t\in(0,1/2)$ it can be shown that the fixed point mapping $T^{FB}$ is 
$\alpha$-firmly nonexpansive with the unique fixed point $(0,0)$, the global minimum 
of the objective function $F$.  Hence 
from any initial point $x^0$ this iteration converges to the global minimum $(0,0)$.  
A blockwise implementation of this algorithm would involve
computing the proximal gradient step with respect to $x_1$, leaving $x_2$ fixed;  that is 
at some iterations $k$ one computes
\begin{equation}\label{eq:ex TFB_1}
 x^{k+1} = T^{FB}_1(x^k)\equiv \prox_{g_1}((\Id - t\nabla_{x^k_1}f)(x^k_1, x^k_2) = ((1-2t)x^k_1-2tx^k_2, x^k_2).
\end{equation}  
A straightforward calculation shows that the blockwise mapping $T^{FB}_1$
is not $\alpha$-fne on $\Rtw$ for any $t>0$, although it is $\alpha$-fne on 
$\Rbb\times \{z\}$ for any $z\in\Rbb$ whenever $t\in (0,1/2)$.  
Being $\alpha$-fne on $\Rbb\times \{z\}$ for 
any $z\in\Rbb$ is not much help, however, since this means that repeated
application of $T^{FB}_1$ defined by \eqref{eq:ex TFB_1} converges 
to the  minimum of $F$ restricted to the affine subspace $\Rbb\times \{z\}$, 
namely $(-z, z)$. 
\end{eg}
In light of the above counterexample, Theorem \ref{t:afneie} below 
shows how randomization in the blockwise forward-backward algorithm 
restores the $\alpha$-fne property {\em in expectation} \cite[Definition 3.6]{HerLukStu22b}.  
This is the fixed point analog to descents in expectation introduced in \cite{salzo2021}.

In the stochastic setting 
we consider only {\em single-valued} mappings $T_i$ 
that are a$\alpha$-fne in expectation.
We can therefore write $x^+ = T_i x$ instead
of always taking some selection $x^+\in T_i x$ (which then raises issues of 
measurability and so forth).  
On a closed subset $G\subset\Ecal$ for a general self-mapping $\map{T_i}{G}{G}$ 
for $i\in \Ibb$, the mapping  
   $\map{\Phi}{G\times \Ibb}{G}$ be given by $\Phi(x,i) = T_{i}x$  
   is 
said to be \emph{pointwise almost nonexpansive in expectation at $x_0\in G$} on 
$G$, abbreviated {\em pointwise almost nonexpansive in expectation}, whenever 
    \begin{align}\label{eq:panee}
		\exists \epsilon\in [0,1):\quad 
		\mathbb{E}\ecklam{\|\Phi(x,\xi)- \Phi(x_{0},\xi)\|} \le \sqrt{1+\epsilon}\,
		\|x-x_0\|, \qquad \forall x \in G.
    \end{align}
    When the above inequality holds for all $x_0\in  G$ then
    $\Phi$ is said to be {\em almost nonexpansive in expectation on
      $ G$}.  As before, the violation is a value of  $\epsilon$ for which 
  \eqref{eq:panee} holds. When the violation is $0$, the qualifier ``almost'' is dropped. 
The mapping $\Phi$ is said to be {\em pointwise almost $\alpha$-firmly 
nonexpansive  in expectation at $x_0\in  G$} on $ G$, abbreviated 
{\em pointwise a$\alpha$-fne in expectation}, 
  whenever
  \begin{eqnarray}\label{eq:paafne i.e.}
&&\exists \epsilon\in [0,1), \alpha\in (0,1):\quad \forall x \in  G,\\
&&\quad\mathbb{E}\ecklam{\|\Phi(x,\xi)-\Phi(x_0,\xi)\|^2}\leq 
(1+\epsilon)\|x-x_0\|^2 - 
\tfrac{1-\alpha}{\alpha}\mathbb{E}\left[\psi(x,x_0, \Phi(x,\xi), \Phi(x_0,\xi))\right].
\nonumber
\end{eqnarray}
When the above inequality holds for all $x_0\in  G$ then $\Phi$ is said to be 
{\em almost $\alpha$-firmly nonexpansive (a$\alpha$-fne) in expectation on $ G$}.  The 
violation is a value of $\epsilon$ for which \eqref{eq:paafne i.e.} holds. 
When the violation is $0$, the qualifier ``almost'' is dropped and the abbreviation 
{\em $\alpha$-fne in expectation} is used. The defining inequalities 
\eqref{eq:panee} and  \eqref{eq:paafne i.e.}
will be amended below in \eqref{eq:Phi aafneie} to account for {\em weighted norms}.  

The next result, derived from \cite[Proposition 5.5]{Kartamyschew} 
shows in particular that any collection of self-mappings $\{T_i\}_{i\in \Ibb}$ on $ G\subset\Ecal$ 
that is a$\alpha$-fne on $G_{M_i}\bigoplus\{z\}$ is a$\alpha$-fne in 
expectation with respect to a weighted norm on $G$. In particular, denote
by $\eta_i$ the probability of selecting the $i$'th collection of blocks, $M_i$, and 
let $p_j$ denote the probability that the 
$j$'th block is among the randomly selected collection of blocks:  
\begin{equation} 0<p_j = \sum_{i\in\Ibb}\eta_i\cdot\chi_{M_i}(j)\leq 1
 \where 
    \chi_{M_i}(j) = \begin{cases}
        1&\mbox{ if }j\in M_i\\
        0& \mbox{else}
     \end{cases}
    \quad     (j=1,2,\dots,m).
     \label{e:pj}
\end{equation}
Define the corresponding weighted norm
\begin{equation}\label{e:norm pbf}
\norm{z}_\pbf\equiv \paren{\sum_{j=1}^m \tfrac{1}{p_j}\|z_j\|_{\Ecal_j}^2}^{1/2}.
\end{equation}

\begin{thm}[almost $\alpha$-firmly nonexpansive in expectation 
(a$\alpha$-fne in expectation)]\label{t:afneie}
 Let the single-valued self-mappings $\{T_i\}_{i\in \Ibb}$ on the 
 subset $G\subset\Ecal$ 
satisfy
\begin{enumerate}[(a)]
 \item\label{t:afneie a} for each $i$, $T_i$ is the identity mapping on $\Ecal_{M_i^{\circ}}$;
 \item\label{t:afneie b} $T_1$ is a$\alpha$-fne on $ G$ with constant $\alphabar$ and 
violation no greater than $\epsilonbar$ where 
$M_1\equiv \{1,2,\dots,m\}$.  
\end{enumerate}
Then 
\begin{enumerate}[(i)]
 \item\label{t:afneie i} for all $i$ and each $z\in G$,  
 $T_i$ is a$\alpha$-fne on $G_{M_i}\bigoplus\{z\}_{M_i^\circ}$ 
with constant at most $\alphabar$ and 
violation no greater than $\epsilonbar$;  
 \item\label{t:afneie ii} the mapping $\map{\Phi}{ G\times \Ibb}{ G}$ given by 
$\Phi(x,i) = T_{i}x$ satisfies 
\begin{subequations}\label{eq:Phi summary}
\begin{equation}\label{eq:Phi aafneie}
 \Ebb\ecklam{\norm{\Phi(x, \xi)- \Phi(y,\xi)}_\pbf^2}\leq 
 (1+\pbar\epsilonbar)\|x-y\|_\pbf^2 - \tfrac{1-\alphabar}{\alphabar}
  \Ebb\ecklam{\psi_\pbf(x,y,\Phi(x,\xi),\Phi(y,\xi))}\quad\forall x,y\in G
\end{equation}
where
\begin{equation}
\psi_\pbf(x,y, \Phi(x,i),\Phi(y,i))\equiv 
\|\paren{x-\Phi(x,i)} - \paren{y-\Phi(y,i)}\|_\pbf^2
\und \pbar\equiv\max_j\{p_j\}.\label{eq:psi_p}
\end{equation}
\end{subequations}
\end{enumerate}
\end{thm}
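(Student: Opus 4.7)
For part~(i), I would proceed by a direct blockwise decomposition. Fix $i\in\Ibb$ and $z\in G$, and take $x,y$ in the affine slice $G_{M_i}\bigoplus\{z\}_{M_i^\circ}$, so that $x_j=y_j=z_j$ for $j\in M_i^\circ$. Since $M_1=\{1,2,\dots,m\}$, the structure \eqref{e:T blockwise} gives $[T_1 w]_j = T'_j(w)$ for every $j$, while hypothesis~(a) gives $[T_i w]_j = T'_j(w)$ for $j\in M_i$ and $[T_i w]_j = w_j$ otherwise. Setting $\Delta\equiv\sum_{j\in M_i^\circ}\|T'_j x-T'_j y\|^2\ge 0$, the representation \eqref{eq:nice ineq} of the transport discrepancy together with the equality $x_j-y_j=0$ on $M_i^\circ$ yields simultaneously
\[
\|T_1 x-T_1 y\|^2 = \|T_i x-T_i y\|^2 + \Delta \quad\mbox{and}\quad \psi(x,y,T_1 x,T_1 y) = \psi(x,y,T_i x,T_i y)+\Delta.
\]
Plugging these into the a$\alpha$-fne inequality \eqref{eq:paafne} for $T_1$ supplied by~(b) and rearranging, the residual $-\Delta-\tfrac{1-\alphabar}{\alphabar}\Delta$ is nonpositive, so the claimed a$\alpha$-fne inequality for $T_i$ on the slice holds with constant $\alphabar$ and violation $\epsilonbar$.

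For part~(ii), the plan is to compute both expectations explicitly and then invoke~(b). Using~(a) and the identity $\sum_{i\in\Ibb}\eta_i\chi_{M_i}(j)=p_j$ from \eqref{e:pj},
\begin{align*}
\Ebb\ecklam{\|\Phi(x,\xi)-\Phi(y,\xi)\|_\pbf^2}
 &= \sum_{i\in\Ibb}\eta_i\paren{\sum_{j\in M_i}\tfrac{1}{p_j}\|T'_j x-T'_j y\|^2+\sum_{j\in M_i^\circ}\tfrac{1}{p_j}\|x_j-y_j\|^2}\\
 &= \|T_1 x-T_1 y\|^2 + \|x-y\|_\pbf^2 - \|x-y\|^2,
\end{align*}
because each weight $1/p_j$ cancels against the updated-block probability $p_j$ and leaves a residual $(1-p_j)/p_j$ on the untouched blocks. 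The same bookkeeping applied to $\psi_\pbf$ via \eqref{eq:nice ineq}, with the $M_i^\circ$ contributions vanishing, gives $\Ebb\ecklam{\psi_\pbf(x,y,\Phi(x,\xi),\Phi(y,\xi))}=\psi(x,y,T_1 x,T_1 y)$.

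To close, I would substitute the bound $\|T_1 x-T_1 y\|^2\le(1+\epsilonbar)\|x-y\|^2-\tfrac{1-\alphabar}{\alphabar}\psi(x,y,T_1 x,T_1 y)$ from~(b) into the identity above and verify
\[
(1+\epsilonbar)\|x-y\|^2 + \|x-y\|_\pbf^2 - \|x-y\|^2 \le (1+\pbar\epsilonbar)\|x-y\|_\pbf^2,
\]
which reduces to $\|x-y\|^2\le\pbar\|x-y\|_\pbf^2$ and is immediate from $p_j\le\pbar$ for all $j$. The hard part is really only the bookkeeping: the weighted norm $\|\cdot\|_\pbf$ is precisely tuned so that the factors $1/p_j$ cancel against the block-selection probabilities after taking expectations, thereby converting the a$\alpha$-fne inequality for the full update $T_1$ into the a$\alpha$-fne-in-expectation inequality \eqref{eq:Phi aafneie}, with the clean violation $\pbar\epsilonbar$ coming from the single worst-case ratio $\pbar/p_j\ge 1$.
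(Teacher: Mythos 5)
Your proposal is correct and follows essentially the same route as the paper: the blockwise bookkeeping in part~(ii), the cancellation of the weights $1/p_j$ against the selection probabilities, the substitution of the a$\alpha$-fne inequality for $T_1$, and the final bound $\|x-y\|^2\le\pbar\|x-y\|_\pbf^2$ all match the paper's computation. Your part~(i) is simply a more explicit version of the paper's one-line observation comparing $T_i$ with $T_1$ on the affine slice, with the nonnegative surplus $\Delta$ discarded.
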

A mapping $\map{\Phi}{ G\times \Ibb}{ G}$ that satisfies 
\eqref{eq:Phi aafneie} is called 
{\em a$\alpha$-fne in expectation}  
with respect to the weighted norm $\|\cdot\|_\pbf$ with constant $\alphabar$ 
and violation no greater than $\pbar\epsilonbar$.  

\begin{proof}
The proof of part \eqref{t:afneie i} follows immediately from the observation that $T_i$
on $ G_{M_i}\bigoplus\{z\}_{M_i^{\circ}}$ 
is equivalent to $T_1$ restricted to the same subset.

To see part \eqref{t:afneie ii}, fix any $x,y\in G$,  
and let $\map{T'_j}{ G_j}{ G_j}$ ($j=1,2,\dots,m$)
be the $j$'th block mapping for $j\in M_i$.  
Hence,  
$T_i(x) = P_{\Ecal_{M_i}}T_i(x) + P_{\Ecal_{M_i^\circ}}(x)$ and 
$T_1(x)=\bigoplus_{j=1}^m T'_j(x)$ where $\map{P_{\Ecal_{M_i}}}{\Ecal}{\Ecal}$
is the orthogonal projection onto the subspace $\Ecal_{M_i}$ and likewise for 
$P_{\Ecal_{M_i^\circ}}$.  
We begin with the left hand side of the defining inequality:
\begin{eqnarray}
 \Ebb\ecklam{\norm{\Phi(x, \xi)- \Phi(y,\xi)}_\pbf^2}
    &=& \sum_{i\in\Ibb}\eta_i\norm{T_i(x)- T_i(y)}_\pbf^2\nonumber\\
    &=& \sum_{i\in \Ibb}\eta_i\norm{\paren{P_{\Ecal_{M_i}}T_i(x)+ P_{\Ecal_{M_i^\circ}}(x)}- 
        \paren{P_{\Ecal_{M_i}}T_i(y) + P_{\Ecal_{M_i^\circ}}(y)}}_\pbf^2\nonumber\\
    &=& \sum_{i\in \Ibb}\eta_i
        \paren{\norm{P_{\Ecal_{M_i}}T_i(x) - P_{\Ecal_{M_i}}T_i(y)}_\pbf^2+ 
        \norm{P_{\Ecal_{M_i^\circ}}(x-y)}_\pbf^2}\nonumber\\
    &=& \sum_{i\in \Ibb}\eta_i\paren{\sum_{j\in M_i}\tfrac{1}{p_j}\norm{T'_{j}(x) - T'_{j}(y)}_{\Ecal_j}^2+ 
        \sum_{k\in M_i^\circ}\tfrac{1}{p_k}\norm{x_k-y_k}_{\Ecal_k}^2}.\label{e:the globe}
\end{eqnarray}
Then \eqref{e:the globe} rearranges to 
\begin{eqnarray}
 \Ebb\ecklam{\norm{\Phi(x, \xi)- \Phi(y,\xi)}_\pbf^2}
    &=& \sum_{i\in \Ibb}\eta_i\paren{\sum_{j\in M_i}\tfrac{1}{p_j}\norm{T'_{j}(x) - T'_{j}(y)}_{\Ecal_j}^2+ 
        \sum_{k\in M_i^\circ}\tfrac{1}{p_k}\norm{x_k-y_k}_{\Ecal_k}^2}\nonumber\\
    &=& \sum_{j=1}^m p_j\tfrac{1}{p_j}\norm{T'_{j}(x) - T'_{j}(y)}_{\Ecal_j}^2+ 
        (1-p_j)\tfrac{1}{p_j}\norm{x_j-y_j}_{\Ecal_j}^2\nonumber\\
    &=& \norm{T_1(x)-T_1(y)}^2 - \norm{x-y}^2 + \norm{x-y}_\pbf^2.\label{eq:last}
\end{eqnarray}
We simplify the expectation of the weighted transport discrepancy \eqref{eq:psi_p} next. 
\begin{eqnarray}
 \Ebb\ecklam{\psi_\pbf(x,y,\Phi(x,\xi),\Phi(y,\xi))} &=& 
        \sum_{i\in \Ibb}\eta_i
        \paren{\norm{\paren{x-T_i(x)} - \paren{y-T_i(x)}}_\pbf^2}\nonumber\\
    &=& \sum_{j=1}^m p_j\tfrac{1}{p_j}\norm{\paren{x_j-T'_{j}(x)} - \paren{y_j-T'_{j}(y)}}_{\Ecal_j}^2\nonumber\\
    &=& \norm{\paren{x-T_1(x)}-\paren{y-T_1(y)}}^2.\label{eq:time}
\end{eqnarray}
Combining \eqref{eq:last} with $\tfrac{1-\alphabar}{\alphabar}$ times 
\eqref{eq:time} yields
\begin{eqnarray}
&& \Ebb\ecklam{\norm{\Phi(x, \xi)- \Phi(y,\xi)}_\pbf^2} + 
 \tfrac{1-\alphabar}{\alphabar} \Ebb\ecklam{\psi_\pbf(x,y,\Phi(x,\xi),\Phi(y,\xi))}\nonumber\\
&&\quad =  \norm{T_1(x)-T_1(y)}^2 - \norm{x-y}^2 + \norm{x-y}_\pbf^2 + 
 \tfrac{1-\alphabar}{\alphabar} \norm{\paren{x-T_1(x)}-\paren{y-T_1(y)}}^2.
 \label{eq:Bitter}
\end{eqnarray}
Now by assumption \eqref{t:afneie b}, $T_1$ is a$\alpha$-fne with constant 
$\alphabar$ and violation no greater than $\epsilonbar$ on $ G$.  
Therefore \eqref{eq:Bitter} is bounded by 
\begin{eqnarray}
\Ebb\ecklam{\norm{\Phi(x, \xi)- \Phi(y,\xi)}_\pbf^2} + 
 \tfrac{1-\alphabar}{\alphabar} \Ebb\ecklam{\psi_\pbf(x,y,\Phi(x,\xi),\Phi(y,\xi))}
 &\leq& \epsilonbar\norm{x-y}^2 + \norm{x-y}_\pbf^2\nonumber\\
&\leq& (1+\pbar\epsilonbar)\norm{x-y}_\pbf^2
 \label{eq:Sweet}
\end{eqnarray}
for all $x,y\in G$ as claimed.  
\end{proof}

Following \cite{HerLukStu22b}, we lift these notions to the analogous regularity of Markov operators on 
the space of probability measures. 
Let  $\mathcal{P}$ be the Markov operator with transition kernel
\[
  (x\in G\subset\Ecal) (A\in
  \mathcal{B}(G)) \qquad p(x,A) \equiv \mathbb{P}(\Phi(x,\xi) \in A)
\]
where $\xi$ is an $\Ibb$-valued random variable and 
$\map{\Phi}{G\times  \Ibb}{G}$ is a measurable update function. 
The Markov operator is said to be \emph{pointwise
	  almost nonexpansive in measure at $\mu_0\in \mathscr{P}(G)$} on $\mathscr{P}(G)$, 
	  abbreviated {\em pointwise almost nonexpansive in measure}, whenever 
    \begin{align}\label{eq:paneim}
		\exists \epsilon\in [0,1):\quad d_{W_{2,\pbf}}(\mu\Pcal, \mu_0\Pcal) \le \sqrt{1+\epsilon}\,
		d_{W_{2,\pbf}}(\mu, \mu_0), \qquad \forall \mu\in \mathscr{P}(G).
    \end{align}
    When the above inequality holds for all $\mu_0\in \mathscr{P}(G)$ then
    $\Pcal$ is said to be {\em almost nonexpansive  in measure on
      $\mathscr{P}(G)$}.  As before, the violation is a value of  $\epsilon$ for which 
  \eqref{eq:paneim} holds. When the violation is $0$, the qualifier ``almost'' is dropped. 
  Let $\Ccal_*(\mu_1,\mu_2)$ denote the set of couplings where the distance $d_{W_{2,\pbf}}(\mu_1,\mu_2)$
 is attained (i.e. the optimal couplings between $\mu_1$ and $\mu_2$)
The Markov operator $\Pcal$ is said to be {\em pointwise almost $\alpha$-firmly 
nonexpansive  in measure at $\mu_0\in \mathscr{P}(G)$} on $\mathscr{P}(G)$, 
abbreviated {\em pointwise a$\alpha$-fne in measure}, 
  whenever
  \begin{eqnarray}
&&\exists \epsilon\in [0,1), \alpha\in (0,1): 
\qquad \forall\mu\in \mathscr{P}(G),\forall \gamma\in C_*(\mu, \mu_0)
\nonumber\\
&& d_{W_{2,\pbf}}(\mu\Pcal, \mu_0\Pcal)^2\leq 
(1+\epsilon)d_{W_{2,\pbf}}(\mu, \mu_0)^2 - \nonumber\\
&&\qquad \qquad\qquad \qquad
\tfrac{1-\alpha}{\alpha}\int_{G\times G}\mathbb{E}\left[\psi_\pbf(x,y, \Phi(x,\xi), \Phi(y,\xi))\right] \gamma(dx, dy).
	  \label{eq:paafne i.m.}
\end{eqnarray}
When the above inequality holds for all $\mu_0\in \mathscr{P}(G)$ then $\Pcal$ is said to be 
{\em a$\alpha$-fne in measure on $\mathscr{P}(G)$}.  The 
violation is a value of $\epsilon$ for which \eqref{eq:paafne i.m.} holds. 
When the violation is $0$, the qualifier ``almost'' is dropped and the abbreviation 
{\em $\alpha$-fne in measure} is employed.  The notions above were defined in \cite[Definition 2.8]{HerLukStu22b}
on more general metric spaces.  

\begin{prop}[Proposition 2.10, \cite{HerLukStu22b}]\label{thm:Tafne in exp 2 pafne of P}
  Let $G\subset\Ecal$, let 
   $\map{\Phi}{G\times \Ibb}{G}$ be given by $\Phi(x,i) = T_{i}x$
   and let $\psi_\pbf$ be defined by \eqref{eq:psi_p}. 
   Denote  by $\mathcal{P}$  the Markov operator with update function $\Phi$ and 
   transition kernel $p$ defined by
  \eqref{eq:trans kernel}.
  If $~\Phi$  is a$\alpha$-fne in expectation on $G$ with
  constant $\alpha\in (0,1)$ and violation $\epsilon\in [0,1)$, then the Markov operator 
  $\mathcal{P}$ is a$\alpha$-fne 
  in measure on $\mathscr{P}_2(G)$ 
  with constant $\alpha$ and violation at most $\epsilon$, that is, $\Pcal$ 
  satisfies 
  \begin{eqnarray}
d_{W_{2,\pbf}}^{2}(\mu_1\mathcal{P}, \mu_2\mathcal{P})&\le&    
   (1+\epsilon)d_{W_{2,\pbf}}^{2}(\mu_1, \mu_2) -  
   \tfrac{1-\alpha}{\alpha}  \int_{G\times G}
   \mathbb{E}\ecklam{\psi_\pbf(x,y, \Phi(x,\xi), \Phi(y,\xi))}\ \gamma(dx, dy)
   \nonumber\\
\label{eq:alphfne meas}&&   
\qquad\qquad\qquad\qquad   
 \forall \mu_2, \mu_1\in \mathscr{P}_2(G), \ \forall \gamma\in C_*(\mu_1,\mu_2). 
  \end{eqnarray}
\end{prop}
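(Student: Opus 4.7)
The plan is to lift the pointwise a$\alpha$-fne in expectation inequality \eqref{eq:Phi aafneie} to the level of measures by constructing an explicit (suboptimal) coupling between $\mu_1\mathcal{P}$ and $\mu_2\mathcal{P}$ from an optimal coupling $\gamma \in \Ccal_*(\mu_1,\mu_2)$, and then using the defining property of the Wasserstein-$2$ distance as an infimum.

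First, given $\gamma \in \Ccal_*(\mu_1,\mu_2)$, I would define the measure $\gamma' \in \mathscr{P}(G\times G)$ by
\begin{equation*}
\gamma'(A\times B) \equiv \int_{G\times G} \mathbb{P}\paren{\Phi(x,\xi)\in A,\ \Phi(y,\xi)\in B}\,\gamma(dx,dy),\quad A,B\in\mathcal{B}(G).
\end{equation*}
Measurability of the integrand follows from Assumption \ref{ass:1}\eqref{item:ass1:Phi} and Fubini. A direct computation with the defining transition kernel $p(x,A) = \mathbb{P}(\Phi(x,\xi)\in A)$ shows
\begin{equation*}
\gamma'(A\times G) = \int_G p(x,A)\,\mu_1(dx) = (\mu_1\mathcal{P})(A),\qquad \gamma'(G\times A) = (\mu_2\mathcal{P})(A),
\end{equation*}
so $\gamma' \in \Ccal(\mu_1\mathcal{P},\mu_2\mathcal{P})$.

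Second, since $d_{W_{2,\pbf}}^2(\mu_1\mathcal{P},\mu_2\mathcal{P})$ is the infimum of $\int \|u-v\|_\pbf^2\,\gamma''(du,dv)$ over all couplings $\gamma''$ of $(\mu_1\mathcal{P},\mu_2\mathcal{P})$, the particular coupling $\gamma'$ yields the upper bound
\begin{equation*}
d_{W_{2,\pbf}}^2(\mu_1\mathcal{P},\mu_2\mathcal{P}) \le \int_{G\times G} \|u-v\|_\pbf^2\,\gamma'(du,dv)
= \int_{G\times G} \mathbb{E}\ecklam{\norm{\Phi(x,\xi)-\Phi(y,\xi)}_\pbf^2}\gamma(dx,dy),
\end{equation*}
where the second equality is Fubini applied to the definition of $\gamma'$.

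Third, I would apply the a$\alpha$-fne in expectation hypothesis \eqref{eq:Phi aafneie} on the integrand pointwise in $(x,y)$, obtaining
\begin{equation*}
d_{W_{2,\pbf}}^2(\mu_1\mathcal{P},\mu_2\mathcal{P}) \le \int_{G\times G}\paren{(1+\epsilon)\|x-y\|_\pbf^2 - \tfrac{1-\alpha}{\alpha}\Ebb\ecklam{\psi_\pbf(x,y,\Phi(x,\xi),\Phi(y,\xi))}}\gamma(dx,dy).
\end{equation*}
Since $\gamma$ is an \emph{optimal} coupling, the first integral equals $(1+\epsilon)d_{W_{2,\pbf}}^2(\mu_1,\mu_2)$, giving exactly \eqref{eq:alphfne meas}.

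The conceptual heart of the argument is the one-line calculation above; the genuinely technical point—and the one I would watch most carefully—is the measurability and Fubini juggling needed to justify that $\gamma'$ is indeed a probability measure on $G\times G$ with the claimed marginals, and that the integrand $(x,y)\mapsto \Ebb\ecklam{\psi_\pbf(x,y,\Phi(x,\xi),\Phi(y,\xi))}$ is $\gamma$-measurable. Everything else is essentially bookkeeping against the definitions.
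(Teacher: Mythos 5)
Your argument is correct and is essentially the standard proof of this result: the paper itself does not reprove it but cites \cite{HerLukStu22b}, where the same synchronous-coupling construction (push an optimal coupling $\gamma$ of $(\mu_1,\mu_2)$ forward through $(x,y,i)\mapsto(T_ix,T_iy)$, use the resulting coupling as a competitor in the Wasserstein infimum, then integrate the pointwise a$\alpha$-fne-in-expectation inequality against $\gamma$) is used. The only points to keep tidy are the ones you flagged yourself: measurability of the integrands via Assumption \ref{ass:1}\eqref{item:ass1:Phi}, and the (routine) observation that the a$\alpha$-fne-in-expectation bound also guarantees $\mu_i\mathcal{P}\in\mathscr{P}_2(G)$ so the left-hand side is finite.
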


\begin{thm}[stochastic block iterations]\label{t:harvest1}
 Let the single-valued self-mappings $\{T_i\}_{i\in \Ibb}$ on the convex 
 subset $G\subset \Ecal$ 
satisfy
\begin{enumerate}[(a)]
 \item\label{t:fneie a} $T_i$ is the identity mapping on $\Ecal_{M_i^{\circ}}$;
 \item\label{t:fneie b} $T_1$ is a$\alpha$-fne on $ G$ with constant $\alphabar$ and 
 violation no greater than $\epsilonbar$.
\end{enumerate}
Then the Markov operator $\Pcal$ with update function 
$\Phi$ is a$\alpha$-fne in measure 
with constant $\alphabar$ and violation at most $\pbar\epsilonbar$.
\end{thm}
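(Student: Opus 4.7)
The proof proposal is essentially a two-step composition of previously established results: Theorem \ref{t:afneie} first lifts the pointwise regularity from $T_1$ on $G$ to a regularity-in-expectation statement for the update function $\Phi(x,i) = T_i x$ with respect to the weighted norm $\|\cdot\|_\pbf$, and then Proposition \ref{thm:Tafne in exp 2 pafne of P} lifts that expectation-level statement to the corresponding regularity in measure for the Markov operator $\Pcal$ with respect to the weighted Wasserstein metric $d_{W_{2,\pbf}}$. So the plan is just to chain these two results.

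First, I would invoke Theorem \ref{t:afneie}: its hypotheses \eqref{t:afneie a} and \eqref{t:afneie b} coincide verbatim with hypotheses \eqref{t:fneie a} and \eqref{t:fneie b} here, so conclusion \eqref{t:afneie ii} yields that the measurable update function $\Phi(x,i) = T_i x$ satisfies inequality \eqref{eq:Phi aafneie}, i.e., $\Phi$ is a$\alpha$-fne in expectation on $G$ with respect to the weighted norm $\|\cdot\|_\pbf$, with constant $\alphabar$ and violation no greater than $\pbar\epsilonbar$, where $\pbar = \max_j p_j$ and the weighted transport discrepancy is $\psi_\pbf$ defined in \eqref{eq:psi_p}.

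Second, I would apply Proposition \ref{thm:Tafne in exp 2 pafne of P} to the update function $\Phi$. The Markov operator $\Pcal$ associated with $\Phi$ has transition kernel $p(x,A) = \Pbb(\Phi(x,\xi) \in A)$ as in \eqref{eq:trans kernel}, which is exactly the setting of the proposition. Feeding in $\alpha = \alphabar$ and $\epsilon = \pbar\epsilonbar$, and using the transport discrepancy $\psi_\pbf$ in the weighted norm (as is allowed because Proposition \ref{thm:Tafne in exp 2 pafne of P} is stated with the same norm driving both the expectation inequality and the Wasserstein distance), we obtain
\begin{eqnarray*}
d_{W_{2,\pbf}}^{2}(\mu_1\Pcal, \mu_2\Pcal) &\le& (1+\pbar\epsilonbar)\, d_{W_{2,\pbf}}^{2}(\mu_1, \mu_2) \\
 && \quad - \tfrac{1-\alphabar}{\alphabar}  \int_{G\times G} \Ebb\ecklam{\psi_\pbf(x,y, \Phi(x,\xi), \Phi(y,\xi))}\,\gamma(dx, dy)
\end{eqnarray*}
for all $\mu_1,\mu_2 \in \mathscr{P}_2(G)$ and all $\gamma \in \Ccal_*(\mu_1,\mu_2)$, which is precisely the definition \eqref{eq:paafne i.m.} of $\Pcal$ being a$\alpha$-fne in measure with constant $\alphabar$ and violation at most $\pbar\epsilonbar$.

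I do not expect any serious obstacle here, since the work has already been done: the only thing to verify carefully is that Proposition \ref{thm:Tafne in exp 2 pafne of P} applies when the norm on $G$ is the weighted Euclidean norm $\|\cdot\|_\pbf$ rather than the standard one, so that $\mathscr{P}_2(G)$ and $d_{W_{2,\pbf}}$ in the two statements match. Since $\|\cdot\|_\pbf$ is an equivalent Euclidean norm, $G$ remains a subset of a Euclidean space and $\mathscr{P}_2(G)$ is unchanged as a set, so Proposition \ref{thm:Tafne in exp 2 pafne of P} applies verbatim in this weighted metric. This completes the proof.
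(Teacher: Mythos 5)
Your proof is correct and is exactly the argument the paper gives: the paper's proof of this theorem is the one-line observation that it follows immediately from Theorem \ref{t:afneie} combined with Proposition \ref{thm:Tafne in exp 2 pafne of P}. Your write-up simply makes the chaining of the two results (and the matching of the weighted norm/metric) explicit.
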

\begin{proof}
This is an immediate consequence of Theorem \ref{t:afneie}
 and Proposition \ref{thm:Tafne in exp 2 pafne of P}.
\end{proof}

Note also that, since $\psi_\pbf$ is nonnegative, $T_i$ is also 
almost nonexpansive in expectation whenever $T_1$ is a$\alpha$-fne;  the corresponding 
Markov operator is almost nonexpansive in 
measure with the corresponding violation whenever conditions \eqref{t:fneie a}-\eqref{t:fneie b} 
of Theorem \ref{t:harvest1}
are satisfied.  

In preparation for the next refinements, following \cite{HerLukStu22b} 
we lift the weighted transport discrepancy 
$\psi_\pbf$ to the corresponding {\em invariant Markov transport discrepancy} 
$\map{\Psi}{\mathscr{P}(G)}{\mathbb{R}_+}\cup\{+\infty\}$ on the subset $G\subset\Ecal$ 
 defined by 
 \begin{equation}\label{eq:Psi}
\Psi(\mu)\equiv \inf_{\pi\in\inv\mathcal{P}}\inf_{\gamma\in \Ccal_*(\mu,\pi)}
\left(\int_{G\times G}
\mathbb{E}\left[\psi_\pbf(x,y, T_\xi x, T_\xi y)\right]\ \gamma(dx, dy)\right)^{1/2}.
 \end{equation}
 It is not guaranteed  that both $\inv\mathcal{P}$ and $\Ccal_*(\mu,\pi)$ are nonempty;  
 when at least one of these is empty $\Psi(\mu)\equiv +\infty$. 
It is clear that $\Psi(\pi)=0$ for any 
$\pi \in \inv\Pcal$.

\subsection{Special Case: consistent stochastic feasibility}\label{s:consistent}
The stochastic fixed point problem \eqref{eq:stoch_fix_probl}  is called {\em consistent} in 
\cite{HerLukStu19a, HerLukStu22a, HerLukStu22b} when, for some closed subset $G\subset\Ecal$,  
\begin{align}
  \label{eq:stoch_feas_probl}
C := \set{x \in G}{\mathbb{P}(x = T_{\xi}x) = 1}\neq\emptyset.
\end{align}
In this case, the notions developed above can be sharpened.

Recall that a {\em paracontraction} is a continuous mapping $\map{T}{ G}{ G}$
possessing fixed points 
that satisfies
\[
 \|T(x)-y\|< \|x-y\|\quad\forall y\in\Fix T, \forall x\in G\setminus\Fix T.
\]
Any $\alpha$-fne mapping on a Euclidean space, for example, is a paracontraction.  

The notion of paracontractions extends to random function iterations for consistent stochastic
feasibility.  Continuous self-mappings $\map{T_i}{G}{G}$ ($i\in\Ibb$) are 
{\em paracontractions in expectation} with respect to the 
weighted norm $\|z\|_\pbf$ 
whenever  
\begin{equation}\label{e:pie}
C\neq\emptyset\und  \Ebb\ecklam{\|T_\xi x-y\|_\pbf}< \|x-y\|_\pbf\quad\forall y\in C, \forall x\in G\setminus\Fix T.
\end{equation}
The next result shows that, for consistent stochastic feasibility,  
collections of mappings $T_i$ defined in Theorem \ref{t:afneie}
with $\epsilonbar=0$ are paracontractions in expectation.  

\begin{cor}[paracontractions in expectation]\label{t:para}
 Let the single-valued self-mappings $\{T_i\}_{i\in \Ibb}$ on $ G$ 
satisfy
\begin{enumerate}[(a)]
 \item\label{t:para a} $T_i$ is the identity mapping on $\Ecal_{M_i^{\circ}}$;
 \item\label{t:para b} for every $z\in M_i^\circ$, $T_i$ is $\alpha$-fne on 
 $ G_{M_i}\bigoplus\{z\}_{M_i^\circ}$ with constant $\alphabar$ 
 for all $i$;
 \item $C := \set{x \in  G}{\mathbb{P}(x = T_{\xi}x) = 1}\neq\emptyset$.
\end{enumerate}
Then 
the mapping $\map{\Phi}{ G\times \Ibb}{ G}$ given by 
$\Phi(x,i) = T_{i}x$ is a paracontraction in expectation:
\begin{equation}\label{eq:Phi-Pcal para}
 \Ebb\ecklam{\norm{\Phi(x, \xi)- \Phi(y,\xi)}_\pbf^2}< 
 \|x-y\|_\pbf^2 \quad\forall x\in G\setminus C, \forall y\in C.
\end{equation}
\end{cor}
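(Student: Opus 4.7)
The plan is to apply Theorem~\ref{t:afneie} directly and then exploit the consistency assumption. First I observe that hypotheses (a)--(b) of Corollary~\ref{t:para}, specialized to the index $i=1$ (where $M_1=\{1,2,\dots,m\}$ and hence $M_1^\circ=\emptyset$), say precisely that $T_1$ is $\alpha$-fne on the trivial slice $G_{M_1}\bigoplus\{z\}_{M_1^\circ}=G$ with constant $\alphabar$. This is exactly hypothesis (b) of Theorem~\ref{t:afneie} with violation $\epsilonbar=0$, so Theorem~\ref{t:afneie}(ii) applies and yields, for all $x,y\in G$,
\begin{equation*}
\Ebb\ecklam{\norm{\Phi(x,\xi)-\Phi(y,\xi)}_\pbf^2} \leq \|x-y\|_\pbf^2 - \tfrac{1-\alphabar}{\alphabar}\,\Ebb\ecklam{\psi_\pbf(x,y,\Phi(x,\xi),\Phi(y,\xi))}.
\end{equation*}

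Next I specialize to $y\in C$. By the definition of $C$ in \eqref{eq:stoch_feas_probl}, $T_\xi y=y$ $\mathbb{P}$-almost surely, so $\Phi(y,\xi)=y$ a.s., and substitution into the formula \eqref{eq:psi_p} for $\psi_\pbf$ gives, a.s.,
\begin{equation*}
\psi_\pbf(x,y,\Phi(x,\xi),\Phi(y,\xi)) = \norm{(x-\Phi(x,\xi))-(y-y)}_\pbf^2 = \norm{x-\Phi(x,\xi)}_\pbf^2.
\end{equation*}
Plugging this into the previous inequality, and replacing $\Phi(y,\xi)$ by $y$ on the left, produces
\begin{equation*}
\Ebb\ecklam{\norm{\Phi(x,\xi)-y}_\pbf^2} \leq \|x-y\|_\pbf^2 - \tfrac{1-\alphabar}{\alphabar}\,\Ebb\ecklam{\norm{x-\Phi(x,\xi)}_\pbf^2}.
\end{equation*}

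Finally I argue that the inequality is strict when $x\in G\setminus C$. By definition, $x\notin C$ means $\mathbb{P}(T_\xi x\neq x)>0$, so the nonnegative random variable $\norm{x-\Phi(x,\xi)}_\pbf^2$ has strictly positive expectation. Since $\alphabar\in(0,1)$, the correction term $\tfrac{1-\alphabar}{\alphabar}\,\Ebb\ecklam{\norm{x-\Phi(x,\xi)}_\pbf^2}$ is strictly positive and the displayed inequality becomes strict, which, together with $\Phi(y,\xi)=y$ a.s.\ on the left, is precisely \eqref{eq:Phi-Pcal para}. The only subtlety in this short argument is recognizing that Corollary~\ref{t:para}(b) at $i=1$ already supplies the full-space $\alpha$-fne hypothesis of Theorem~\ref{t:afneie} with zero violation; once this is noted, the rest is a direct substitution enabled by the consistency of $C$.
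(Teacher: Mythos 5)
Your proof is correct and follows essentially the same route as the paper: the paper's (very terse) argument likewise rests on the a$\alpha$-fne-in-expectation inequality of Theorem~\ref{t:afneie} with zero violation, together with the observation that for $y\in C$ the expected transport discrepancy reduces to $\Ebb\ecklam{\norm{x-\Phi(x,\xi)}_\pbf^2}$, which is strictly positive when $x\notin C$. Your reading of hypothesis (b) at $i=1$ (the slice being all of $G$, so $T_1$ is $\alpha$-fne on $G$ with no violation) is exactly the interpretation the paper's proof implicitly uses, and your write-up just makes the substitution and strictness step explicit.
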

\begin{proof}
Note that 
$\psi_\pbf$ takes the value $0$
only when $x$ and $y$ are both in $\Fix T_i$; hence, for all $y\in C$ 
\begin{equation}\label{eq:Phi afneie}
 \Ebb\ecklam{\norm{T_{\xi}(x)- T_{\xi}(y)}_\pbf^2}<
 \|x-y\|_\pbf^2\quad\forall x\in G\setminus C.
\end{equation}
\end{proof}

To show the analogous result for the Markov operator $\Pcal$ requires more work. 
A Markov operator is a paracontraction with respect to the weighted Wasserstein
metric $d_{W_{2,M}}$ whenever   
\begin{equation}\label{e:Mpie}
\inv\Pcal\neq\emptyset\und  
d_{W_{2,M}}(\mu\Pcal, \pi)< d_{W_{2,M}}(\mu, \pi)\quad\forall \pi\in \inv\Pcal, 
 \forall \mu\in \mathscr{P_2}(G)\setminus\inv\Pcal.
\end{equation}
In the case of consistent stochastic feasibility, the invariant Markov transport discrepancy 
reduces to a very simple form. 
Indeed, note first of all that a $\delta$-distribution centered on any point $x\in C$ 
is invariant with respect to $\Pcal$ so the set of invariant measures supported on $C$,  
\begin{align}
  \label{eq:stoch_feas_probl2}
\mathscr{C} := \set{\mu \in \inv\Pcal}{\Supp\mu\subset C},
\end{align}
is nonempty whenever $C$ is.  
Now suppose $\pi\in\mathscr{C}$.  Then 
$y= T_\xi y$ almost surely whenever $y\in\Supp\pi$ and \eqref{eq:time} yields
 \begin{eqnarray}
\inf_{\gamma\in \Ccal_*(\mu,\pi)}
\left(\int_{G\times G}
\mathbb{E}\left[\psi_\pbf(x,y, T_\xi x, T_\xi y)\right]\ \gamma(dx, dy)\right)^{1/2}
&=& 
\inf_{\gamma\in \Ccal_*(\mu,\pi)}
\left(\int_{G\times G}
\mathbb{E}\left[\|x - T_\xi x\|_\pbf^2\right]\ \gamma(dx, dy)\right)^{1/2}\nonumber\\
&=& 
\left(\int_{G}
\mathbb{E}\left[\|x - T_\xi x\|_\pbf^2\right]\ \mu(dx)\right)^{1/2}\nonumber\\
&=& \left(\int_{G}
\|x - T_1 x\|^2\, \mu(dx)\right)^{1/2}\quad\forall\pi\in\mathscr{C}.
\label{eq:StoPBForBS}
 \end{eqnarray}
 Thus the invariant Markov transport discrepancy defined in \eqref{eq:Psi} has 
 the following simple upper bound:
  \begin{eqnarray}
\Psi(\mu)&\equiv& \inf_{\pi\in\inv\Pcal}\inf_{\gamma\in \Ccal_*(\mu,\pi)}
\left(\int_{G\times G}
\mathbb{E}\left[\psi_\pbf(x,y, T_\xi x, T_\xi y)\right]\ \gamma(dx, dy)\right)^{1/2}
\nonumber\\
&\leq&
\inf_{\pi\in\mathscr{C}}\inf_{\gamma\in \Ccal_*(\mu,\pi)}
\left(\int_{G\times G}
\mathbb{E}\left[\psi_\pbf(x,y, T_\xi x, T_\xi y)\right]\ \gamma(dx, dy)\right)^{1/2}
\nonumber\\
&=&
\left(\int_{G}
\|x - T_1 x\|^2\, \mu(dx)\right)^{1/2},
\label{eq:Psi_StoPBForBS}
 \end{eqnarray}
 where the last equality follows from \eqref{eq:StoPBForBS}. Inequality 
 \eqref{eq:Psi_StoPBForBS} is tight for all $\mu$ supported on $C$, so clearly 
 $\mu\in\mathscr{C}$ implies that $\Psi(\mu)=0$.  
 On the other hand, if $\Psi(\mu)=0$ implies that 
$\Supp\mu\subset C$, then $\mathscr{C}=\inv\Pcal$ and 
\eqref{eq:Psi_StoPBForBS} holds with equality for all $\mu$.   
This holds, in particular, 
when $T_i$ is a paracontraction in expectation (see \cite[Lemma 3.3]{HerLukStu19a}
and Theorem \ref{t:invMeasforParacontra} below). 

Let's assume, then, that  $\Psi(\mu)=0$ if and only if  $\Supp\mu\subset C$.  
Then  
\[
d_{W_{2,\pbf}}(\mu,\inv\Pcal) = \paren{\int_G \inf_{z\in C}\|x-z\|_\pbf^2 \mu(d x)}^{1/2}, 
\]
and \eqref{eq:Psi_StoPBForBS} holds with equality, so 
\begin{equation}\label{e:Tricky}
d_{W_{2,\pbf}}(\mu,\inv\Pcal) = d_{W_{2,\pbf}}(\mu,\Psi^{-1}(0)) = 
\paren{\int_G \inf_{z\in C}\|x-z\|_\pbf^2 \mu(d x)}^{1/2}. 
\end{equation}

\begin{thm}[Markov operators of paracontractions in expectation] \label{t:invMeasforParacontra} 
Let $G\subset\Ecal$ be closed.  If the continuous self-mappings $\map{T_i}{G}{G}$ ($i\in\Ibb$) defined by 
  \eqref{e:T blockwise} are 
  paracontractions in expectation on $G$ with respect to the weighted norm $\|\cdot\|_\pbf$ 
  defined by \eqref{e:norm pbf}, then
  \begin{enumerate}[(i)]
   \item\label{t:invMeasforParacontra i} the associated Markov operator 
   $\Pcal$ is a paracontraction with respect to $d_{W_{2,\pbf}}$;
   \item\label{t:invMeasforParacontra ii} if $G$ is bounded, the set of invariant measures for $\mathcal{P}$ is 
   $\set{\pi \in \mathscr{P}(G)}{\Supp \pi \subset C}$;
   \item\label{t:invMeasforParacontra iii} if $G$ is bounded,
   \begin{subequations}\label{e:pointwise error bound}
    \begin{eqnarray}\label{e:simple psi}
      (\forall x\in G)\quad \Psi(\delta_x) &=& \norm{x-T_1(x)}\quad \und \\
     \frac{1}{\sqrt{\pbar}}\inf_{z\in C}\|x-z\|&\leq& \inf_{z\in C}\|x-z\|_\pbf 
      =  d_{W_{2,\pbf}}(\delta_x,\inv\Pcal) = d_{W_{2,\pbf}}\paren{\delta_x,\Psi^{-1}(0)}.
    \end{eqnarray}
    \end{subequations}
  \end{enumerate}
\end{thm}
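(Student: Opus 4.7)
\proofsketch
The strategy is to prove (ii) first, since the characterization of $\inv\Pcal$ unlocks the remaining parts. The $\supseteq$ direction of (ii) is immediate: if $\Supp\pi\subset C$, then for $\pi$-a.e.~$x$ one has $T_\xi x=x$ almost surely, so $(\pi\Pcal)(A)=\int p(x,A)\,\pi(dx)=\int \1_A(x)\,\pi(dx)=\pi(A)$. For $\subseteq$, I would fix any $c\in C$ and use the Lyapunov function $V(x)\equiv \|x-c\|_\pbf$, which is bounded on $G$ by the boundedness assumption. Invariance gives $\int V\,d\pi=\int \Pcal V\,d\pi$, i.e.,
\[
\int \|x-c\|_\pbf\,\pi(dx)=\int \Ebb\bigl[\|T_\xi x-c\|_\pbf\bigr]\,\pi(dx).
\]
Since $T_\xi c=c$ almost surely, the paracontraction-in-expectation property \eqref{e:pie} yields $\Ebb[\|T_\xi x-c\|_\pbf]<\|x-c\|_\pbf$ on $G\setminus C$ and equality on $C$. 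If $\pi(G\setminus C)>0$ the integral inequality is strict, contradicting the equality above. Hence $\Supp\pi\subset C$.

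Given (ii), part (i) follows from a coupling argument. By (ii), $\{\delta_c : c\in C\}\subset\inv\Pcal$, so $\inv\Pcal\neq\emptyset$. Pick $\mu\in\mathscr{P}_2(G)\setminus\inv\Pcal$ and $\pi\in\inv\Pcal$, and let $\gamma^\ast\in\Ccal_\ast(\mu,\pi)$ be an optimal coupling. Since $\Supp\pi\subset C$, the second marginal of $\gamma^\ast$ sits in $C$. Drawing $(X,Y)\sim\gamma^\ast$ and $\xi$ independently, the pair $(T_\xi X,T_\xi Y)=(T_\xi X,Y)$ has marginals $(\mu\Pcal,\pi\Pcal)=(\mu\Pcal,\pi)$, so
\[
d_{W_{2,\pbf}}^{2}(\mu\Pcal,\pi)\;\le\;\int \Ebb\bigl[\|T_\xi x-y\|_\pbf^{2}\bigr]\,\gamma^\ast(dx,dy).
\]
The integrand is bounded by $\|x-y\|_\pbf^{2}$ with strict inequality on $(G\setminus C)\times C$, and because $\mu\notin\inv\Pcal$ forces $\mu(G\setminus C)>0$, the coupling $\gamma^\ast$ must give positive mass to $(G\setminus C)\times C$, producing strict contraction after integration.

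Part (iii) then drops out by specialization. The general bound \eqref{eq:Psi_StoPBForBS} evaluated at $\mu=\delta_x$ gives $\Psi(\delta_x)\le \|x-T_1 x\|$; by (ii), $\inv\Pcal=\mathscr{C}$, so \eqref{eq:Psi_StoPBForBS} is an equality, yielding $\Psi(\delta_x)=\|x-T_1 x\|$. The norm comparison $\|w\|_\pbf^{2}=\sum_j p_j^{-1}\|w_j\|^{2}\ge \pbar^{-1}\|w\|^{2}$ provides the leftmost inequality. The infimum $\inf_{z\in C}\|x-z\|_\pbf$ equals $d_{W_{2,\pbf}}(\delta_x,\inv\Pcal)$ because, by (ii), any invariant measure is supported on $C$, and for point mass $\delta_x$ the best transport target is a point mass at the nearest point of $C$ in the $\|\cdot\|_\pbf$-metric (applying \eqref{e:Tricky}). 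Equality with $d_{W_{2,\pbf}}(\delta_x,\Psi^{-1}(0))$ is then immediate from $\inv\Pcal=\Psi^{-1}(0)$.

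The main obstacle I anticipate is the mismatch between the $L^1$-type strict inequality built into the definition \eqref{e:pie} of paracontraction in expectation and the $L^2$-type bound that $d_{W_{2,\pbf}}$ requires in the coupling step of (i). The Lyapunov argument for (ii) uses only the $L^1$ form, but the coupling in (i) needs $\Ebb[\|T_\xi x-y\|_\pbf^{2}]<\|x-y\|_\pbf^{2}$ for $y\in C$, $x\in G\setminus C$. This gap is to be closed by invoking the $L^2$ companion inequality established in the proof of Corollary \ref{t:para} (namely \eqref{eq:Phi afneie}), which is the natural source of paracontractions in this paper; continuity of the $T_i$ ensures non-strict $L^2$ nonexpansiveness extends across $C$. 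Boundedness of $G$ is used only via bounded convergence/integrability of the Lyapunov function, and it is this hypothesis that upgrades the easier $\supseteq$ inclusion in (ii) to the full characterization.
\endproofsketch
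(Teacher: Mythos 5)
Your proposal is correct in substance but takes a genuinely different route, most visibly in part \eqref{t:invMeasforParacontra ii}. Where the paper (following \cite[Lemma 3.3]{HerLukStu19a}) argues by contradiction on a maximizer of $d(\cdot,C)$ over the compact support of $\pi$, introducing the sets $K(\epsilon)$, the events $M(\epsilon)$, $A_n$, and the continuity of the $T_i$, you simply integrate the pointwise strict inequality $\Ebb\ecklam{\|T_\xi x-c\|_\pbf}<\|x-c\|_\pbf$ ($x\in G\setminus C$, $c\in C$) against an invariant $\pi$, using $\int V\,d\pi=\int \Pcal V\,d\pi$ for the bounded Lyapunov function $V=\|\cdot-c\|_\pbf$. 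This is shorter, uses only the first-moment form of \eqref{e:pie} (so it is immune to the $L^1$/$L^2$ issue), and locates the role of boundedness precisely in the integrability of $V$. It does need the small supplementary remark that $C$ is closed (a consequence of continuity of the $T_i$), so that $\pi(G\setminus C)=0$ really yields $\Supp\pi\subset C$. Part \eqref{t:invMeasforParacontra iii} is handled exactly as in the paper, via equality in \eqref{eq:Psi_StoPBForBS} and \eqref{e:Tricky}.

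Two caveats on part \eqref{t:invMeasforParacontra i}. First, by routing (i) through (ii) you import boundedness of $G$ into a claim that the theorem states for closed, possibly unbounded $G$; the paper proves (i) directly, pushing an arbitrary coupling $\gamma\in\Ccal(\mu,\pi)$ through the common update $T_\xi$ and invoking the contraction inequality under the integral, without ever locating $\Supp\pi$. (Your detour does make the strictness step more transparent---the paper's strict inequality tacitly needs the pointwise inequality along the support of $\pi$, which \eqref{e:pie} only supplies for points of $C$---but as written your argument proves (i) under a stronger hypothesis than stated.) Second, the $L^1$-versus-$L^2$ mismatch you flag is real: \eqref{e:pie} controls first moments while $d_{W_{2,\pbf}}$ requires squared norms, and Jensen goes the wrong way, so the squared inequality is not a consequence of \eqref{e:pie}. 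The paper's own proof of (i) silently works with the squared form of paracontractivity in expectation; your proposed repair via \eqref{eq:Phi afneie} of Corollary \ref{t:para} instead imports the a$\alpha$-fne structure, which is not among the hypotheses of this theorem, so it narrows the statement rather than closing the gap within the stated assumptions.
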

\begin{proof} 
\eqref{t:invMeasforParacontra i}.
For a random variable $X\sim \mu$, we have $T_\xi X = \Phi(X, \xi)\sim \mu\mathcal{P}$,
and for a random variable $Y\sim \pi\in \inv \mathcal{P}$
we have $T_\xi Y =\Phi(Y, \xi)\sim \pi\mathcal{P}= \pi$, so
\begin{eqnarray}
 d_{W_{2,\pbf}}(\mu\Pcal, \pi) &=& 
 \paren{\inf_{\gamma\in \Ccal(\mu\Pcal,\pi)}\int_{G\times G} \|x^+-y\|_\pbf^2\gamma(dx^+, dy)}^{1/2}
 \nonumber\\
 &\leq& \paren{\inf_{\gamma\in \Ccal(\mu,\pi)}\int_{G\times G} \Ebb\ecklam{\|T_\xi x-y\|_\pbf^2}\gamma(dx, dy)}^{1/2}
 \nonumber\\
 &<& \paren{\inf_{\gamma\in \Ccal(\mu,\pi)}\int_{G\times G} \|x-y\|_\pbf^2\gamma(dx, dy)}^{1/2}
 \nonumber\\
 &=& d_{W_{2,\pbf}}(\mu, \pi) \qquad\forall \pi\in\inv\Pcal, \forall\mu\in\mathscr{P}_2(G)\setminus\inv\Pcal,
\end{eqnarray}
where the last inequality follows from the assumption that 
$T_i$ defined by \eqref{e:T blockwise} are 
  a paracontractions in expectation with respect to the weighted norm $\|\cdot\|_\pbf$.
  This establishes that $\Pcal$ is a paracontraction in the $d_{W_{2,\pbf}}$ metric as claimed.

\eqref{t:invMeasforParacontra ii}. Our proof  
follows the proof of \cite[Lemma 3.3]{HerLukStu19a}.
  It is clear that $\pi \in \mathscr{P}(G)$ with $\Supp \pi \subset C\subset G$
  is invariant, since $p(x,\{x\}) = \mathbb{P}(T_{\xi}x \in \{x\}) =
  \mathbb{P}(x \in \Fix T_{\xi}) = 1$ for all $x \in C$ and hence
  $\pi\mathcal{P}(A) = \int_{C}p(x,A) \pi(\dd{x}) = \pi(A)$ for all $A
  \in \mathcal{B}(G)$.

  Suppose, on the other hand, that $ \Supp \pi
  \setminus C \neq \emptyset$ for some $\pi\in\inv\Pcal$ with $\Supp\pi\subset G$.  
  Then due to compactness of $\Supp \pi$ (it is
  closed in the compact set $G$) we can find $s \in \Supp \pi$ maximizing the
  continuous function $d(\cdot,C)\equiv \inf_{z\in C}\|\cdot - z\|$ on $G$. So $d_{\text{max}} =
  d(s,C) > 0$. We show that this leads only to contradictions, so 
  the assumption of the existence of such a $\pi$ must be false.  

  Define the set of points being more than $d_{\text{max}}-\epsilon$
  away from $C$:
  \begin{align*}
    K(\epsilon) := \set{x \in G}{ d(x,C) > d_{\text{max}} -
      \epsilon }, \qquad \epsilon \in (0,d_{\text{max}}).
  \end{align*}
  This set is measurable, i.e.\ $K(\epsilon) \in \mathcal{B}(G)$,
  because it is open. Let $M(\epsilon)$ be the event in the sigma algebra
  $\mathcal{F}$, that $T_{\xi}s$ is at least $\epsilon$ closer to $C$ than $s$,
  i.e.\
  \begin{align*}
    M(\epsilon) := \set{ \omega \in \Omega}{ d(T_{\xi(\omega)}s,C) \le d_{\text{max}} - \epsilon}.
  \end{align*}
  There are two possibilities, either there is an $\epsilon \in
  (0,d_{\text{max}})$ with $\mathbb{P}(M(\epsilon)) >0$ or no such
  $\epsilon$ exists.  In the latter case we have $\Ebb\ecklam{d(T_{\xi}s,C)} =
  d_{\text{max}} = d(s,C)$ since 
  $T_{i}$ is a paracontraction in expectation. By compactness of $C$ there exists $c \in C$ such that
  $0<d_{\text{max}} = \|s-c\|$.  Hence the probability of the set of
  $\omega\in\Omega$ such that $s \not \in \Fix T_{\xi(\omega)}$ is
  positive and so $\Ebb\ecklam{d(T_{\xi(\omega)}s, C)}
  \le \Ebb\ecklam{\|T_{\xi(\omega)}s-c\|} < \|s - c\|$ - a contradiction.

  Suppose next that there is an $\epsilon \in (0,d_{\text{max}})$
  with $\mathbb{P}(M(\epsilon)) >0$.  In view of continuity of the
  mappings $T_{i}$ around $s$, $i \in \Ibb$, define
  \begin{align*}
    A_{n} : = \set{\omega \in M(\epsilon)}%
    { \|(T_{\xi(\omega)}x - T_{\xi(\omega)} s\|
    \le \tfrac{\epsilon}{2} \quad \forall
      x\in\mathbb{B}(s,\tfrac{1}{n})}\quad (n \in \mathbb{N}).
  \end{align*}
  It holds that $A_{n} \subset A_{n+1}$ and $\mathbb{P}(\bigcup_{n}
  A_{n}) = \mathbb{P}(M(\epsilon))$. So in particular there is an $m
  \in \mathbb{N}$, $m \ge 2/\epsilon$ with $\mathbb{P}(A_{m}) > 0$.
  For all $x \in \mathbb{B}(s,\tfrac{1}{m})$ and all $\omega\in A_{m}$
  we have
  \begin{align*}
    d(T_{\xi(\omega)} x, C) \le 
    \|T_{\xi(\omega)}x - T_{\xi(\omega)} s\| + d(T_{\xi(\omega)} s,C) 
    \le d_{\text{max}}
    - \frac{\epsilon}{2},
  \end{align*}
  which means $T_{\xi(\omega)}x \in G \setminus
  K(\tfrac{\epsilon}{2})$.  Hence, in particular we conclude that
  \[
  p(x,K(\tfrac{\epsilon}{2})) < 1 \quad\forall x \in
  \mathbb{B}(s,\tfrac{1}{m}).
  \]
  Since $p(x,K(\epsilon)) = 0$ for $x \in G$ with $d(x,C) \le
  d_{\text{max}} - \epsilon$ by the assumption that $T_i$ is 
  a paracontraction in expectation, it holds by
  invariance of $\pi$ that
  \begin{align*}
    \pi(K(\epsilon)) = \int_{G} p(x,K(\epsilon)) \pi(\dd{x}) =
    \int_{K(\epsilon)} p(x,K(\epsilon)) \pi(\dd{x}).
  \end{align*}
  It follows, then, that
  \begin{align*}
    \pi(K(\tfrac{\epsilon}{2})) &= \int_{K(\tfrac{\epsilon}{2})}
    p(x,K(\tfrac{\epsilon}{2})) \pi(\dd{x}) \\ &=
    \int_{\mathbb{B}(s,\tfrac{1}{ m})}p(x, K(\tfrac{ \epsilon}{2}) )
    \pi( \dd{x}) + \int_{K(\tfrac{\epsilon}{2}) \setminus
      \mathbb{B}(s,\tfrac{1}{ m})} p(x,K(\tfrac{\epsilon}{2}))
    \pi(\dd{x}) \\ &< \pi( \mathbb{B}(s ,\tfrac{1}{m})) +
    \pi(K(\tfrac{\epsilon}{2}) \setminus \mathbb{B}(s, \tfrac{1}{
      m}))= \pi(K( \tfrac{ \epsilon}{2}))
  \end{align*}
  which leads again to a contradiction.  So the assumption that $\Supp
  \pi\setminus C \neq \emptyset$ is false, i.e.\ $\Supp \pi \subset C$
  as claimed.

\eqref{t:invMeasforParacontra iii}. By part \eqref{t:invMeasforParacontra ii}, 
$\inv\Pcal = \set{\pi\in\mathscr{P}(G)}{\Supp\pi\subset C}$, 
so \eqref{eq:Psi_StoPBForBS}
holds with equality, and $\Psi(\mu)=0$
if and only if $\Supp \mu\subset C$, hence writing \eqref{e:Tricky} pointwise 
(i.e., for $\mu=\delta_x$) reduces the expression to
\[
\inf_{z\in C}\|x-z\|_\pbf  = d_{W_{2,\pbf}}(\delta_x,\inv\Pcal) = d_{W_{2,\pbf}}(\delta_x,\Psi^{-1}(0)).
\]
The representation \eqref{e:pointwise error bound} then follows from $\pbar\equiv\max_j\{p_j\}$.
  \end{proof}

\section{Convergence}\label{s:convergence}
Contractive Markov operators have been extensively, almost exclusively, studied.   
When the update function $\Phi$ is a 
contraction in expectation, then \cite[Theorem 2.12]{HerLukStu22b} shows that the 
corresponding Markov operator $\Pcal$ 
is $\alpha$-fne, and the sequence of measures $(\mu_k)$ converges Q-linearly (geometrically) 
to an invariant measure from any starting measure $\mu_0\in\mathscr{P}(\Ecal)$.
When the mappings $T_i$ are only $\alpha$-firmly nonexpansive on $\Ecal$, then $\mu_k$
converges in the Prokhorov-Levi metric to an invariant measure from any initial measure 
\cite[Theorem 2.9]{HerLukStu22a}.   To obtain generic (weak) convergence of the iterates 
$\mu_k$ one must show that the 
sequence is {\em tight}.  This has been established for Markov operators with 
nonexpansive update functions \cite[Lemma 3.19]{HerLukStu22a}.  We skirt a study of whether 
tightness can be established under the assumption that the update functions 
$\Phi(x,i)$ are only nonexpansive in 
expectation;  we suspect, however, that this is not the case.  

\subsection{Generic proto-convergence}
We establish a few properties that are cornerstones of a generic global convergence 
analysis.  In particular, we show that 
when the Markov operator is $\alpha$-fne (which, as shown above, does not require that 
all the mappings $T_i$ be $\alpha$-fne) this property together with an additional 
assumption about the decay of the invariant Markov transport discrepancy yields 
boundedness and {\em asymptotic regularity} of the sequence of measures.  
\begin{propn}[asymptotic regularity]\label{t:asymp reg}
   Let the Markov operator $\map{\Pcal}{\mathscr{P}_2(\Ecal)}{\mathscr{P}_2(\Ecal)}$
   with update functions $\Phi(x, i)$ possess at least one
   invariant measure and be pointwise $\alpha$-fne in measure at all $\pi\in\inv\Pcal$.  If the 
   invariant Markov transport discrepancy satisfies 
   \begin{equation}\label{e:Lyapunov}
    \exists c>0: \quad \Psi(\mu)\geq c d_{W_{2,\pbf}}(\mu,\mu\Pcal)\quad\forall\mu\in\mathscr{P}_2(\Ecal),
   \end{equation}
   then the sequence $(\mu_k)_{k\in\Nbb}$ defined by $\mu_{k+1}=\mu_k\Pcal$ for any $\mu_0\in\mathscr{P}_2(\Ecal)$ 
   is bounded and asymptotically 
   regular, i.e. satisfies $d_{W_{2,\pbf}}(\mu_k,\mu_{k+1}) \to  0$.
\end{propn}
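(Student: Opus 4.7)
\proof The plan is to run the standard Fejér-type argument, but adapted to the Wasserstein geometry on $\mathscr{P}_2(\Ecal)$, using the $\alpha$-fne in measure property at invariant measures together with the lower bound \eqref{e:Lyapunov} on $\Psi$.

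First, fix any $\pi^*\in\inv\Pcal$ (which is nonempty by hypothesis; without loss of generality $\pi^*\in\mathscr{P}_2(\Ecal)$, else the argument below trivializes). Let $(\mu_k)$ denote the sequence generated by $\mu_{k+1}=\mu_k\Pcal$ starting from $\mu_0\in\mathscr{P}_2(\Ecal)$. Since $\pi^*\Pcal=\pi^*$, the pointwise $\alpha$-fne in measure property of $\Pcal$ at $\pi^*$ (inequality \eqref{eq:paafne i.m.} with $\epsilon=0$), applied with $\mu=\mu_k$ and any optimal coupling $\gamma_k\in\Ccal_*(\mu_k,\pi^*)$, gives
\begin{equation*}
d_{W_{2,\pbf}}(\mu_{k+1},\pi^*)^2 \;\leq\; d_{W_{2,\pbf}}(\mu_k,\pi^*)^2 \;-\; \tfrac{1-\alpha}{\alpha}\int_{\Ecal\times\Ecal}\Ebb\bigl[\psi_\pbf(x,y,\Phi(x,\xi),\Phi(y,\xi))\bigr]\gamma_k(dx,dy).
\end{equation*}
Since the inequality holds for all optimal couplings, we may take the infimum of the integral over $\gamma\in\Ccal_*(\mu_k,\pi^*)$. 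Then, by the definition \eqref{eq:Psi} of $\Psi$, the resulting integral is bounded below by $\Psi(\mu_k)^2$. Combining with the hypothesis \eqref{e:Lyapunov}, we obtain the central recursion
\begin{equation}\label{e:central-rec}
d_{W_{2,\pbf}}(\mu_{k+1},\pi^*)^2 \;\leq\; d_{W_{2,\pbf}}(\mu_k,\pi^*)^2 \;-\; \tfrac{(1-\alpha)c^2}{\alpha}\,d_{W_{2,\pbf}}(\mu_k,\mu_{k+1})^2.
\end{equation}

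From \eqref{e:central-rec}, the sequence $k\mapsto d_{W_{2,\pbf}}(\mu_k,\pi^*)^2$ is nonincreasing and nonnegative, hence bounded (this yields boundedness of $(\mu_k)$ in the $d_{W_{2,\pbf}}$-sense). Telescoping \eqref{e:central-rec} from $k=0$ to $N$ and letting $N\to\infty$ then gives
\begin{equation*}
\tfrac{(1-\alpha)c^2}{\alpha}\sum_{k=0}^{\infty}d_{W_{2,\pbf}}(\mu_k,\mu_{k+1})^2 \;\leq\; d_{W_{2,\pbf}}(\mu_0,\pi^*)^2 \;<\;\infty,
\end{equation*}
so in particular $d_{W_{2,\pbf}}(\mu_k,\mu_{k+1})\to 0$, which is the asymptotic regularity claim.

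The main technical point to verify carefully is the passage from ``for all $\gamma\in\Ccal_*(\mu_k,\pi^*)$'' in the $\alpha$-fne inequality to ``$\geq\Psi(\mu_k)^2$'': the inequality holds pointwise in $\gamma$, so we may replace the integral by its infimum over $\Ccal_*(\mu_k,\pi^*)$, and this infimum dominates the double infimum in the definition of $\Psi$ (which also takes infimum over $\pi\in\inv\Pcal$). A secondary point is ensuring $d_{W_{2,\pbf}}(\mu_0,\pi^*)<\infty$, which follows since $\mu_0\in\mathscr{P}_2(\Ecal)$ provided an invariant measure in $\mathscr{P}_2(\Ecal)$ exists; if no invariant measure has finite second moment then the hypothesis of pointwise $\alpha$-fne in measure at $\pi^*$ in the $d_{W_{2,\pbf}}$-metric is vacuous and the statement holds trivially.
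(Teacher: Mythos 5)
Your proposal is correct and follows essentially the same route as the paper's proof: apply the pointwise $\alpha$-fne-in-measure inequality at an invariant measure, bound the coupling integral below by $\Psi(\mu_k)^2$ via the definition \eqref{eq:Psi}, invoke \eqref{e:Lyapunov} to get the Fej\'er-type recursion, and telescope to obtain boundedness and summability of $d_{W_{2,\pbf}}(\mu_k,\mu_{k+1})^2$. Your closing remarks on the infimum over couplings and on finiteness of $d_{W_{2,\pbf}}(\mu_0,\pi^*)$ are sound and, if anything, slightly more careful than the paper's treatment.
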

\begin{proof}
Note that \eqref{e:Lyapunov} implies that there is a $c>0$ such that 
  \begin{eqnarray*}
    c^2 d_{W_{2,\pbf}}(\mu,\mu\Pcal)^2\leq \int_{\Ecal\times\Ecal}\Ebb\ecklam{\psi_\pbf(x,y, \Phi(x,\xi), \Phi(y,\xi))}
    \gamma(dx, dy)\quad\forall \pi\in\inv\Pcal,~\forall\gamma\in C_*(\pi,\mu).
   \end{eqnarray*}
 This together with the assumption that $\Pcal$ is $\alpha$-fne yields
  \begin{eqnarray}
    0\leq d_{W_{2,\pbf}}(\mu\Pcal, \pi)^2&\leq& d_{W_{2,\pbf}}(\mu, \pi)^2 - 
    \frac{1-\alpha}{\alpha}\int_{\Ecal\times\Ecal}\Ebb\ecklam{\psi_\pbf(x,y, \Phi(x,\xi), \Phi(x,\xi))}
    \gamma(dx, dy)\nonumber\\
    &\leq& d_{W_{2,\pbf}}(\mu, \pi)^2 - 
    \frac{1-\alpha}{\alpha}c^2 d_{W_{2,\pbf}}(\mu,\mu\Pcal)^2\quad\forall \pi\in\inv\Pcal,~\forall\gamma\in C_*(\pi,\mu).
    \label{e:stay high}
   \end{eqnarray}
Applying \eqref{e:stay high} to 
the sequence of measures generated by $\mu_{k+1}=\mu_k\Pcal$ with 
$\mu_0\in\mathscr{P}_2(\Ecal)$
yields
\[
 \frac{1-\alpha}{\alpha}c^2 \sum_{k=1}^N d_{W_{2,\pbf}}(\mu_k,\mu_{k+1})^2\leq d_{W_{2,\pbf}}(\mu_0,\pi)^2
\quad \forall \pi\in\inv\Pcal, \forall N\in\Nbb.
\]
Letting $N\to\infty$ establishes that the left hand side is summable, hence 
$\liminf d_{W_{2,\pbf}}(\mu_k,\mu_{k+1}) = 0$.  But $\Pcal$ is also pointwise nonexpansive 
at all $\pi\in\inv\Pcal$ since it is pointwise $\alpha$-fne there, 
so $d_{W_{2,\pbf}}(\mu_k,\pi)\leq d_{W_{2,\pbf}}(\mu_0,\pi)$
for all $k$ and $d_{W_{2,\pbf}}(\mu_k,\mu_{k+1}) \to  0$; i.e. the sequence is bounded and 
asymptotically regular as claimed. 
\end{proof}

In the next section we 
pursue a quantitative local convergence analysis under the assumption of {\em metric subregularity}
of the invariant Markov transport discrepancy. 

\subsection{Metric subregularity of the invariant Markov transport discrepancy, convergence and rates}
\label{s:msr mtd}
Recall the inverse mapping 
$\Psi^{-1}(y)\equiv \set{\mu}{\Psi(\mu)=y}$, which 
clearly can be set-valued.  It is important to keep in mind 
that an invariant measure need not correspond to a fixed point of any 
individual mapping $T_i$, unless these have common fixed points.  See 
\cite{HerLukStu22a, HerLukStu22b} instances of this.  
We require that the invariant Markov transport discrepancy $\Psi$ takes the value $0$ at 
$\mu$ if and only if $\mu\in\inv\Pcal$, and is 
{\em gauge metrically subregular for $0$ relative 
to $\mathscr{P}_2(G)$ on $\mathscr{P}_2(G)$}:
\begin{equation}\label{e:metricregularity}
d_{W_{2,\pbf}}(\mu,\inv\Pcal) = d_{W_{2,\pbf}}(\mu,\Psi^{-1}(0))\leq \rho(\Psi(\mu))
\quad \forall \mu\in \mathscr{P}_2(G).
\end{equation}
Here  $d_{W_{2,\pbf}}(\mu,\inv\Pcal) = \inf_{\pi\in\inv\Pcal}d_{W_{2,\pbf}}(\mu,\pi)$,
and  $\rho:[0,\infty) \to [0,\infty)$ is a \textit{gauge function}: it is 
continuous, strictly increasing 
with $\rho(0)=0$, and $\lim_{t\to \infty}\rho(t)=\infty$. 
The gauge of metric subregularity $\rho$  is constructed 
implicitly from another 
nonnegative function $\map{\theta_{\tau,\epsilon}}{[0,\infty)}{[0,\infty)}$
with parameters $\tau>0$ and $\epsilon\geq 0$ satisfying  
\begin{eqnarray}\label{eq:theta_tau_eps}
(i)~ \theta_{\tau,\epsilon}(0)=0; \quad (ii)~ 0<\theta_{\tau,\epsilon}(t)<t ~\forall t\in(0,\tbar]
\mbox{ for some }\tbar>0\end{eqnarray}
and 
\begin{equation}\label{eq:gauge}
 \rho\paren{\paren{\frac{(1+\epsilon)t^2-\paren{\theta_{\tau,\epsilon}(t)}^2}{\tau}}^{1/2}}=
 t\quad\iff\quad
 \theta_{\tau,\epsilon}(t) = \paren{(1+\epsilon)t^2 - \tau\paren{\rho^{-1}(t)}^2}^{1/2}
\end{equation}
for $\tau>0$ fixed.  In the next theorem the parameter $\epsilon$ is 
exactly the violation in a$\alpha$-fne mappings; the parameter $\tau$ is directly computed from the 
constant $\alpha$.  

In preparation for the results that follow, we will require at least one of the 
additional assumptions on $\theta$.
\begin{assumption}\label{ass:msr convergence}
The gauge $\theta_{\tau,\epsilon}$ satisfies \eqref{eq:theta_tau_eps} and 
at least one of the following holds.  
 \begin{enumerate}[(a)]
\item\label{t:msr convergence, necessary sublin} $\theta_{\tau,\epsilon}$ satisfies 
\begin{equation}\label{eq:theta to zero}
    \theta_{\tau,\epsilon}^{(k)}(t)\to 0\mbox{ as }k\to\infty~\forall t\in(0,\tbar),
\end{equation}
 and the sequence $(\mu_k)$ is Fej\'er monotone with respect to $\inv\mathcal{P}\cap \mathscr{P}_2(G)$, i.e.
\begin{equation}\label{eq:Fejer}
d_{W_{2,\pbf}}\paren{\mu_{k+1},\, \pi}\leq d_{W_{2,\pbf}}(\mu_k, \pi) \quad \forall k\in\Nbb, 
\forall \pi\in \inv\mathcal{P}\cap\mathscr{P}_2(G);
\end{equation}
\item\label{t:msr convergence, necessary lin+} 
$\theta_{\tau,\epsilon}$ satisfies 
\begin{equation}\label{eq:theta summable}
    \sum_{j=1}^\infty\theta_{\tau,\epsilon}^{(j)}(t)<\infty~\forall t\in(0,\tbar)
\end{equation}
where $\theta_{\tau,\epsilon}^{(j)}$ denotes the $j$-times composition of $\theta_{\tau,\epsilon}$. 
\end{enumerate}
\end{assumption}

In the case of linear metric subregularity 
this becomes 
\[
\rho(t)=\kappa t\quad\iff\quad  
\theta_{\tau, \epsilon}(t)=\paren{(1+\epsilon)-\frac{\tau}{\kappa^2}}^{1/2}t\quad (\kappa\geq \sqrt{\tfrac{\tau}{(1+\epsilon)}}).
\]  
The condition $\kappa\geq \sqrt{\tfrac{\tau}{(1+\epsilon)}}$ is not a real restriction since, if 
\eqref{e:metricregularity} is satisfied for some $\kappa'>0$, then it is satisfied
for all $\kappa\geq \kappa'$. The conditions in \eqref{eq:theta_tau_eps} in this 
case simplify to $\theta_{\tau, \epsilon}(t)=\gamma t$ where 
\begin{equation}\label{eq:theta linear}
 0< \gamma\equiv 1+\epsilon-\frac{\tau}{\kappa^2}<1\quad\iff\quad 
\sqrt{\tfrac{\tau}{(1+\epsilon)}}\leq  \kappa\leq \sqrt{\tfrac{\tau}{\epsilon}}.
\end{equation}
In other words, $\theta_{\tau, \epsilon}(t)$ satisfies 
Assumption \ref{ass:msr convergence}\eqref{t:msr convergence, necessary lin+}. 
The weaker Assumption \ref{ass:msr convergence}\eqref{t:msr convergence, necessary sublin}
is used to characterize sublinear convergence.

\begin{thm}[convergence rates]\label{t:msr convergence} 
  Let $G\subset\Ecal$ be compact.  Let $\map{T_i}{G}{G}$ satisfy the assumptions of  
  Theorem \ref{t:afneie} for all $i\in \Ibb$.
  Assume furthermore that there is at least one 
  $\pi \in\inv\mathcal{P}\cap \mathscr{P}_2(G)$  where $\mathcal{P}$ is the 
  Markov operator associated with $T_i$.  If, in addition,  $\Psi$ satisfies 
  \eqref{e:metricregularity} with gauge $\rho$ given implicitly by \eqref{eq:gauge} 
  in terms of $\theta_{\tau,\epsilon}$ where 
  $\tau=(1-\alphabar)/\alphabar$, $\epsilon=\pbar\epsilonbar$ 
  as in Theorem \ref{t:afneie},  then for any $\mu_0\in \mathscr{P}_2(G)$ 
  the distributions $\mu_k$ of the iterates of Algorithm \ref{algo:sbi} 
  satisfy  
  \begin{equation}\label{eq:gauge convergence}
                d_{W_{2,\pbf}}\paren{\mu_{k+1},\inv\mathcal{P}}
                \leq \theta_{\tau,\epsilon}\paren{d_{W_{2,\pbf}}\paren{\mu_k,\inv\mathcal{P}}} 
                \quad \forall k \in \mathbb{N}.
  \end{equation}%

  In addition, let $\tau$ and $\epsilon$ be such that $\theta_{\tau,\epsilon}$ satisfies 
  \eqref{eq:theta_tau_eps} where 
$t_0\equiv d_{W_{2,\pbf}}\paren{\mu_0,\inv\mathcal{P}}<\tbar$ for all 
$\mu_0\in \mathscr{P}_2(G)$,  and let at least one of the conditions in Assumption \ref{ass:msr convergence} hold. 
Then $\mu_k\to \pi^{\mu_0}\in\inv\mathcal{P}\cap\mathscr{P}_2(G)$
 in the $d_{W_{2,\pbf}}$ metric with rate 
 $O\paren{\theta_{\tau, \epsilon}^{(k)}(t_0)}$ in case 
 Assumption \ref{ass:msr convergence}\eqref{t:msr convergence, necessary sublin}
 and with rate  $O(s_k(t_0))$ for 
$s_k(t)\equiv 
\sum_{j=k}^\infty \theta_{\tau, \epsilon}^{(j)}(t)$
in case Assumption \ref{ass:msr convergence}\eqref{t:msr convergence, necessary lin+}.  
\end{thm}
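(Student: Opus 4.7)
The plan is to first establish the one-step recursion \eqref{eq:gauge convergence}, then iterate it to obtain decay of $d_{W_{2,\pbf}}(\mu_k,\inv\mathcal{P})$ at the claimed rate, and finally upgrade this distance-decay to convergence of the full sequence $(\mu_k)$ under each of the two alternatives in Assumption \ref{ass:msr convergence}.

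The key input for the one-step recursion is Theorem \ref{t:harvest1}, which lifts the a$\alpha$-fne property of $T_1$ with constant $\alphabar$ and violation $\epsilonbar$ to a$\alpha$-fne in measure of $\mathcal{P}$ with constant $\alphabar$ and violation at most $\pbar\epsilonbar$; these match the parameters $\tau,\epsilon$ entering $\theta_{\tau,\epsilon}$. Compactness of $G$ makes $\mathscr{P}_2(G)$ weakly compact and, together with Lipschitz continuity of $\mathcal{P}$ in $d_{W_{2,\pbf}}$ (itself a consequence of almost nonexpansiveness in measure), forces $\inv\mathcal{P}$ to be weakly closed, so a projection $\pi^*$ of $\mu_k$ onto $\inv\mathcal{P}$ exists. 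With this $\pi^*$ and any optimal coupling $\gamma^\#\in\mathcal{C}_*(\mu_k,\pi^*)$, \eqref{eq:alphfne meas} applied to $\mu_1=\mu_k$, $\mu_2=\pi^*$ gives
\[
d_{W_{2,\pbf}}^2(\mu_{k+1},\pi^*) \le (1+\pbar\epsilonbar)\,d_{W_{2,\pbf}}^2(\mu_k,\pi^*) - \tfrac{1-\alphabar}{\alphabar}\int_{G\times G}\Ebb\!\left[\psi_\pbf(x,y,\Phi(x,\xi),\Phi(y,\xi))\right]\gamma^\#(dx,dy).
\]
Since $\pi^*$ is a projection, $d_{W_{2,\pbf}}(\mu_k,\pi^*)=d_{W_{2,\pbf}}(\mu_k,\inv\mathcal{P})$, and since $\Psi(\mu_k)^2$ is an infimum over all invariant measures and all optimal couplings, $\int\Ebb[\psi_\pbf]\,d\gamma^\#\ge\Psi(\mu_k)^2$. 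Substituting this lower bound and using $d_{W_{2,\pbf}}(\mu_{k+1},\inv\mathcal{P})\le d_{W_{2,\pbf}}(\mu_{k+1},\pi^*)$ produces $d_{W_{2,\pbf}}^2(\mu_{k+1},\inv\mathcal{P})\le(1+\epsilon)\,d_{W_{2,\pbf}}^2(\mu_k,\inv\mathcal{P})-\tau\,\Psi(\mu_k)^2$. Metric subregularity \eqref{e:metricregularity} then yields $\Psi(\mu_k)\ge\rho^{-1}(d_{W_{2,\pbf}}(\mu_k,\inv\mathcal{P}))$, and the definition \eqref{eq:gauge} of $\theta_{\tau,\epsilon}$ recovers \eqref{eq:gauge convergence}.

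Iterating \eqref{eq:gauge convergence} gives $d_{W_{2,\pbf}}(\mu_k,\inv\mathcal{P})\le\theta_{\tau,\epsilon}^{(k)}(t_0)$ with $t_0=d_{W_{2,\pbf}}(\mu_0,\inv\mathcal{P})<\tbar$, which tends to zero by \eqref{eq:theta to zero} or \eqref{eq:theta summable}. To upgrade this to $\mu_k\to\pi^{\mu_0}\in\inv\mathcal{P}$, two cases arise. Under Assumption \ref{ass:msr convergence}\eqref{t:msr convergence, necessary sublin}, weak compactness of $\mathscr{P}_2(G)$ produces a cluster point of $(\mu_k)$, which lies in $\inv\mathcal{P}$ because $d_{W_{2,\pbf}}(\mu_k,\inv\mathcal{P})\to 0$ and $\inv\mathcal{P}$ is closed; the assumed Fej\'er monotonicity then forces convergence of the whole sequence to this cluster point. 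Under Assumption \ref{ass:msr convergence}\eqref{t:msr convergence, necessary lin+}, I would use almost nonexpansiveness in measure of $\mathcal{P}$ and the triangle inequality to obtain $d_{W_{2,\pbf}}(\mu_{k+1},\mu_k)\le(1+\sqrt{1+\pbar\epsilonbar})\,\theta_{\tau,\epsilon}^{(k)}(t_0)$; summability of $(\theta_{\tau,\epsilon}^{(k)}(t_0))$ renders $(\mu_k)$ Cauchy with tail bounded by $s_k(t_0)$, and the limit $\pi^{\mu_0}$ belongs to $\inv\mathcal{P}$ by continuity of $\mathcal{P}$ together with $\mu_{k+1}=\mu_k\mathcal{P}$.

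The main obstacle is the careful inequality chaining in the one-step argument: the projection $\pi^*$ must be chosen to collapse $d_{W_{2,\pbf}}(\mu_k,\pi^*)$ to $d_{W_{2,\pbf}}(\mu_k,\inv\mathcal{P})$, while the lower bound $\int\Ebb[\psi_\pbf]\,d\gamma^\#\ge\Psi(\mu_k)^2$ is obtained directly from the double-infimum definition of $\Psi$; these two facts pull in opposite directions in general and combine only because metric subregularity ties $\Psi$ back to $d_{W_{2,\pbf}}(\cdot,\inv\mathcal{P})$. A secondary difficulty is that decay of the distance to $\inv\mathcal{P}$ does not by itself deliver convergence of $(\mu_k)$ to a single invariant measure, which is precisely why one of the auxiliary conditions in Assumption \ref{ass:msr convergence} is required.
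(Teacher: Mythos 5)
Your proposal is correct, and its core one-step mechanism (a$\alpha$-fne in measure via Theorem \ref{t:harvest1}/Proposition \ref{thm:Tafne in exp 2 pafne of P}, plus the lower bound $\int\Ebb[\psi_\pbf]\,d\gamma^\#\geq\Psi(\mu_k)^2$ from the double infimum, plus metric subregularity and the gauge relation \eqref{eq:gauge}) is exactly the mechanism underlying \eqref{eq:gauge convergence}; but your route differs from the paper's written proof in how the pieces are assembled. The paper does not rederive \eqref{eq:gauge convergence} or the summable case at all: it invokes \cite[Theorem 2.6]{HerLukStu22b} for both, and only writes out the argument for Assumption \ref{ass:msr convergence}\eqref{t:msr convergence, necessary sublin}, where it shows $d_{W_{2,\pbf}}(\mu_k,\inv\Pcal)\leq\theta_{\tau,\epsilon}^{(k)}(t_0)\to 0$, picks attained projections $\pi_k$ (closedness of $\inv\Pcal$ via the Feller property), and uses the Fej\'er estimate $d_{W_{2,\pbf}}(\mu_l,\mu_k)\leq 2\,d_{W_{2,\pbf}}(\mu_k,\inv\Pcal)$ to get a Cauchy sequence in the complete space $(\mathscr{P}_2(G),d_{W_{2,\pbf}})$, identifying the limit as invariant again via Feller. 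You instead prove everything self-contained: closedness of $\inv\Pcal$ and invariance of the limit via Lipschitz continuity of $\Pcal$ in $d_{W_{2,\pbf}}$ (legitimate, and arguably simpler than the Feller detour), a compactness/cluster-point argument plus Fej\'er monotonicity in case \eqref{t:msr convergence, necessary sublin}, and a direct Cauchy argument with the bound $d_{W_{2,\pbf}}(\mu_{k+1},\mu_k)\leq(1+\sqrt{1+\pbar\epsilonbar})\,\theta_{\tau,\epsilon}^{(k)}(t_0)$ in case \eqref{t:msr convergence, necessary lin+}, which also delivers the $O(s_k(t_0))$ rate cleanly. What your approach buys is independence from the external citation and an explicit rate in the summable case; what the paper's buys is a shorter proof and, in case \eqref{t:msr convergence, necessary sublin}, an explicit rate: note that in your cluster-point argument the claimed rate $O(\theta_{\tau,\epsilon}^{(k)}(t_0))$ for $d_{W_{2,\pbf}}(\mu_k,\pi^{\mu_0})$ does not follow from "nonincreasing with a null subsequence" alone; you should add the paper's two-line Fej\'er step, $d_{W_{2,\pbf}}(\mu_l,\mu_k)\leq d_{W_{2,\pbf}}(\mu_l,\pi_k)+d_{W_{2,\pbf}}(\pi_k,\mu_k)\leq 2\,d_{W_{2,\pbf}}(\mu_k,\inv\Pcal)$ and let $l\to\infty$, to quantify the limit distance. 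This is a routine completion, not a flaw in the approach.
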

\begin{proof}
First we note that, since $G$ is assumed to be compact, $\mathcal{P}$ is a 
nonempty self-mapping on $\mathscr{P}_{2}(G)$ and  
$\mathscr{P}_2(G)$ is locally compact (\cite[Remark 7.19]{AmbGigSav2005}).
 By Theorem \ref{t:afneie}, the update function $\Phi$ is a$\alpha$-fne in expectation
 with respect to the weighted norm $\|\cdot\|_\pbf$ with constant $\alphabar$ and 
 violation $\pbar\epsilonbar$.  The statement is an extension of \cite[Theorem 2.6]{HerLukStu22b}, 
 which establishes \eqref{eq:gauge convergence} and convergence under 
 Assumption \ref{ass:msr convergence}\eqref{t:msr convergence, necessary lin+}. 
 
 To establish convergence under Assumption \ref{ass:msr convergence}\eqref{t:msr convergence, necessary sublin}, 
we show first that $d_{W_{2,\pbf}}\paren{\mu_k, S}\to 0$ where, to reduce notational 
clutter we define $S\equiv\inv\mathcal{P}\cap\mathscr{P}_2(G)$.
Indeed, let $\pi\in S$ and define 
$d_k^\pi\equiv d_{W_{2,\pbf}}\paren{\mu_k, \pi}$.  Since 
$d_{k+1}^\pi\leq d_k^\pi$ for all $k$, this establishes that the 
sequence $(d_{k}^\pi)_{k\in\Nbb}$ is bounded and monotone non-increasing, therefore convergent. 
Noting that $d_{W_{2,\pbf}}\paren{\mu_k, S}\leq d_k^\pi$
for all $k$ and any fixed $\pi\in S$, this also shows that 
$d_{W_{2,\pbf}}\paren{\mu_k, S}$ converges.  
The inequality \eqref{eq:gauge convergence} only requires assumption \eqref{eq:theta_tau_eps}, and this
together with assumption \eqref{eq:theta to zero} yields
\begin{eqnarray*}
d_{W_{2,\pbf}}\paren{\mu_k, S}\leq \theta^{(k)}(t_0)\to 0\mbox{ as }k\to\infty.
\end{eqnarray*}
Since $\mathscr{P}_2(G)$ is locally compact and $\Pcal$ is Feller since $T_i$ is continuous
for all $i$, $\inv\Pcal$ is 
closed \cite{Hairer2021}; so for every $k\in\Nbb$  
the infimum in  $d_{W_{2,\pbf}}\paren{\mu_k, S}$
is attained  at some $\pi_k$. 
Now, for such a $\pi_k$ we have, again by Fej\'er monotonicity, that
\begin{eqnarray*}
 d(\mu_l, \mu_k)\leq d(\mu_l, \xbar^k)+d(\mu_k, \xbar^k)\leq d(\mu_{l-1}, \xbar^k)+ d(\mu_k, \xbar^k)\leq \cdots\leq 2d(\mu_k, S).
\end{eqnarray*}
Since the right hand side converges to $0$ as $k\to\infty$ this shows that the sequence
is a Cauchy sequence on  
$(\mathscr{P}_{2}(G), W_2)$ -- a separable complete metric space 
\cite[Theorem 6.9]{Villani2008} -- and therefore convergent to some probability measure 
$\pi^{\mu_0}\in \mathscr{P}_{2}(G)$. The Markov operator $\mathcal{P}$ is Feller 
and when a Feller Markov chain converges
in distribution, it does so to an invariant measure:  $\pi^{\mu_0} \in \inv \mathcal{P}$ 
(see \cite[Theorem 1.10]{Hairer2021}).
\end{proof}

Note that 
\[
\forall \epsilonbar>\epsilon, \forall t\in [0, t_0],\quad \theta_{\tau,\epsilonbar}(t)>\theta_{\tau,\epsilon}(t).
\]
It is common in optimization algorithms to encounter mappings whose violation $\epsilon$ can be controlled
by choosing a step length parameter small enough;  the gradient descent operator is just such a mapping.   
This means that, if condition \eqref{eq:theta_tau_eps} and at least one of 
\eqref{t:msr convergence, necessary sublin}
or \eqref{t:msr convergence, necessary lin+} in Assumption \ref{ass:msr convergence} is satisfied for {\em some} $\epsilon$, 
and the violation of the fixed point mappings $T_i$ can be made arbitrarily small, then 
Theorem \ref{t:msr convergence} guarantees convergence
with rate given by either $O(\theta^{(k)}_{\tau,\epsilon})$ in case 
\eqref{t:msr convergence, necessary sublin} or $O(s_k(t_0))$ in case 
\eqref{t:msr convergence, necessary lin+} for small enough step sizes on small enough 
neighborhoods of a fixed point.  

\subsubsection{Special Case: consistent stochastic feasibility}\label{s:consistent2}
Recall that, when $\inv\Pcal = \mathscr{C}$ defined by \eqref{eq:stoch_feas_probl2} 
(which, by Theorem \ref{t:invMeasforParacontra} holds when $\Pcal$ is a paracontraction in measure)
the 
relation \eqref{eq:Psi_StoPBForBS} holds with equality, so 
condition \eqref{e:metricregularity}  
simplifies to 
\begin{eqnarray}
d_{W_{2,\pbf}}(\mu,\inv\Pcal) = d_{W_{2,\pbf}}(\mu,\Psi^{-1}(0))&\leq& \rho(\Psi(\mu))
\quad \forall \mu\in \mathscr{P}_2(G)\nonumber\\
&\iff&\nonumber\\
\paren{\int_G \inf_{z\in C}\|x-z\|_\pbf^2 \mu(d x)}^{1/2} &\leq& 
\rho\paren{\left(\int_{G}
\|x - T_1 x\|^2\, \mu(dx)\right)^{1/2}}
\qquad \forall \mu\in \mathscr{P}_2(G).
\label{e:pre metricregularity StoPBForBS}
\end{eqnarray}
Writing this pointwise (i.e., for $\mu=\delta_x$) reduces the expression to
\begin{eqnarray}
\inf_{z\in C}\|x-z\|_\pbf &\leq& 
\rho\paren{
\|x - T_1 x\|}\qquad\forall x\in G,
\label{e:almost metricregularity StoPBForBS}
\end{eqnarray}
whereby, recalling that $\pbar = \max_j\{p_j\}$, \eqref{e:metricregularity} yields
\begin{eqnarray}
\tfrac{1}{\sqrt{\pbar}} d(x,C)\leq \inf_{z\in C}\|x-z\|_\pbf &\leq& 
    \rho\paren{\norm{x-T_1(x)}}
    \qquad\forall x\in G.\label{eq:Sport day}
\end{eqnarray}
This is recognizable 
as a slight generalization of the error bound studied by 
Luo and Tseng \cite{LuoTseng93}.

The next result shows that, for paracontractions,  
metric subregularity 
is {\em automatically} satisfied by Markov chains that are {\em gauge monotone}
with respect to $\inv\Pcal$.  Let $(X_k)_{k\in\Nbb}$ be a sequence of random variables
on the closed subset $G\subset\Ecal$ generated by Algorithm \ref{algo:sbi}, and let $(\mu_k)_{k\in\Nbb}$ be the 
corresponding sequence of distributions.  
Let $\inv\Pcal$ be nonempty and let the continuous mapping
$\map{\theta}{\Rbb_+}{\Rbb_+}$ satisfy 
\begin{eqnarray}\label{eq:theta}
(i)~ \theta(0)=0; \quad (ii)~ 0<\theta(t)\leq t ~\forall t\in (0,\tbar)\mbox{ for some } \tbar>0.
\end{eqnarray}
This is obviously the same as \eqref{eq:theta_tau_eps} but without the parameters since in this
case $\epsilon=0$ and $\tau$ is just some scaling.
For $t_0\equiv d_{W_{2,\pbf}}\paren{\mu_0, \inv\Pcal}$, the sequence 
 $(\mu_k)_{k\in \Nbb}$ is said to be
 \emph{gauge monotone relative to $\inv\Pcal$ with rate $\theta$}  whenever
 \begin{equation}\label{e:mu-uniform mon}
 d_{W_{2,\pbf}}(\mu_{k+1}, \inv\Pcal)\leq 
 \theta\paren{d_{W_{2,\pbf}}(\mu_k, \inv\Pcal)}\,\forall k\in \Nbb
 \end{equation}
where $\theta$ satisfies \eqref{eq:theta} with $t_0<\tbar$.
The sequence $(\mu_k)_{k\in \Nbb}$ is said to be
 \emph{linearly monotone relative to $\inv\Pcal$} with rate $c$ if 
 \eqref{e:mu-uniform mon} is
 satisfied for $\theta(t)\leq c\cdot t$ for all $t\in [0,t_0]$ and some constant $c\in [0,1]$.

 A  Markov chain $(X_k)_{k\in \Nbb}$ that converges to some law $\pi^{\mu_0}\in \mathscr{P}_2(G)$
is said to converge {\em gauge monotonically} in distribution 
whenever the corresponding
sequence of distributions $(\mu_k)_{k\in\Nbb}$
is gauge monotone with gauge $\theta$ satisfying \eqref{eq:theta} with 
$d_{W_{2,\pbf}}(\mu_0, \inv\Pcal)\leq \tbar$.

\begin{propn}[gauge monotonic paracontractions in measure converge
to invariant measures]
\label{t:rm and qafne to convergence}
Let $G\subset\Ecal$ be compact.  Let the Markov operator corresponding to 
Algorithm \eqref{algo:sbi}, $\map{\Pcal}{\mathscr{P}_2(G)}{\mathscr{P}_2(G)}$, 
be a paracontraction with respect to the metric $d_{W_{2,\pbf}}$.  
For a fixed $\mu_0\in \mathscr{P}_2(G)$, let the sequence of measures 
$(\mu_k)_{k\in\mathbb{N}}$ corresponding to the iterates of Algorithm \ref{algo:sbi} 
be gauge monotone relative to $\inv\Pcal$ 
with rate $\theta$ satisfying \eqref{eq:theta} 
 where $t_0\equiv d_{W_{2,\pbf}}\paren{\mu_0, \inv\Pcal}<\tbar$. 
Suppose furthermore that at least one of the conditions \eqref{t:msr convergence, necessary sublin}
or \eqref{t:msr convergence, necessary lin+} of Assumption \ref{ass:msr convergence} are satisfied 
(replacing $\theta_{\tau, \epsilon}$ with $\theta$). 
Then $(\mu_k)_{k\in\mathbb{N}}$ converges gauge monotonically with respect to  
$d_{W_{2,\pbf}}$ to 
some $\pi^{\mu_0}\in\inv\Pcal\cap\mathscr{P}_2(G)$ with rate $O(\theta^{(k)}(t_0)$ 
if Assumption \ref{ass:msr convergence}\eqref{t:msr convergence, necessary sublin} holds, and 
in the case of Assumption \ref{ass:msr convergence}\eqref{t:msr convergence, necessary lin+} 
with rate $O(s_k(t_0))$ for 
$s_k(t)\equiv 
\sum_{j=k}^\infty \theta^{(j)}(t)$ and $t_0\equiv d_{W_{2,\pbf}}(\mu_0, \inv\Pcal)$.  
Moreover, $\Supp\pi^{\mu_0}\subset C$ for $C$ defined by \eqref{eq:stoch_feas_probl}.
\end{propn}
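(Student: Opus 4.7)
The plan is to mirror the convergence argument for Theorem \ref{t:msr convergence} but with the paracontraction hypothesis on $\Pcal$ replacing $\alpha$-firm nonexpansivity, and with the explicit gauge bound \eqref{e:mu-uniform mon} substituting for the metric subregularity inequality \eqref{e:metricregularity}. First I would iterate \eqref{e:mu-uniform mon} to obtain $d_{W_{2,\pbf}}(\mu_k,\inv\Pcal)\leq \theta^{(k)}(t_0)$ for every $k\in\Nbb$; under Assumption \ref{ass:msr convergence}\eqref{t:msr convergence, necessary sublin} this bound vanishes by \eqref{eq:theta to zero}, while under Assumption \ref{ass:msr convergence}\eqref{t:msr convergence, necessary lin+} it is summable so the tail $s_k(t_0)$ vanishes.

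For the Cauchy property, compactness of $G$ makes $\mathscr{P}_2(G)$ compact in the $d_{W_{2,\pbf}}$ topology (the Wasserstein-$2$ metric metrizes weak convergence on a bounded Polish space), and continuity of each $T_i$ makes $\Pcal$ Feller, so $\inv\Pcal$ is closed and hence compact. Consequently the infimum defining $d_{W_{2,\pbf}}(\mu_k,\inv\Pcal)$ is attained at some $\pi_k\in\inv\Pcal$. The paracontraction property of $\Pcal$ implies in particular ordinary nonexpansivity with respect to every fixed $\pi\in\inv\Pcal$, so for any $l>k$,
\[
d_{W_{2,\pbf}}(\mu_l,\mu_k) \leq d_{W_{2,\pbf}}(\mu_l,\pi_k)+d_{W_{2,\pbf}}(\mu_k,\pi_k) \leq 2\,d_{W_{2,\pbf}}(\mu_k,\inv\Pcal) \leq 2\theta^{(k)}(t_0).
\]
Under case \eqref{t:msr convergence, necessary sublin} this vanishes directly and yields the rate $O(\theta^{(k)}(t_0))$; under case \eqref{t:msr convergence, necessary lin+}, the same two-point estimate at each index $j$ gives the one-step bound $d_{W_{2,\pbf}}(\mu_{j+1},\mu_j)\leq 2\theta^{(j)}(t_0)$, which telescopes to $d_{W_{2,\pbf}}(\mu_l,\mu_k)\leq 2 s_k(t_0)$ and vanishes by summability. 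Completeness of $(\mathscr{P}_2(G),d_{W_{2,\pbf}})$ \cite[Theorem~6.9]{Villani2008} then delivers a limit $\pi^{\mu_0}\in\mathscr{P}_2(G)$, and passing $l\to\infty$ in the respective bounds pins down the stated rates.

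Finally, since $\Pcal$ is Feller, any in-distribution limit of the Markov chain is $\Pcal$-invariant (as in the corresponding step of Theorem \ref{t:msr convergence} that invokes \cite[Theorem~1.10]{Hairer2021}), so $\pi^{\mu_0}\in\inv\Pcal$. Because $\Pcal$ is a paracontraction in measure, Theorem \ref{t:invMeasforParacontra}\eqref{t:invMeasforParacontra ii} identifies $\inv\Pcal=\set{\pi\in\mathscr{P}(G)}{\Supp\pi\subset C}$, hence $\Supp\pi^{\mu_0}\subset C$ as claimed. The main delicate point I foresee is justifying attainment of $d_{W_{2,\pbf}}(\mu_k,\inv\Pcal)$ and the selection of $\pi_k$, which rests on combining compactness of $G$ with the Feller property and closedness of $\inv\Pcal$; once this and the correct one-step estimate are in hand, the Fej\'er-monotone-plus-asymptotic-feasibility template lifts cleanly from deterministic iterates to the metric space of probability measures.
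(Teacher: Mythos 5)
Your proposal is correct and follows essentially the same route as the paper: the paper's proof simply defers to the argument of Theorem \ref{t:msr convergence} (gauge-monotone decay of $d_{W_{2,\pbf}}(\mu_k,\inv\Pcal)$, attainment of the nearest invariant measure via compactness and the Feller/closedness of $\inv\Pcal$, the Fej\'er-type two-point estimate $d_{W_{2,\pbf}}(\mu_l,\mu_k)\leq 2\,d_{W_{2,\pbf}}(\mu_k,\inv\Pcal)$, completeness of the Wasserstein space, and invariance of the limit by the Feller property), and then cites Theorem \ref{t:invMeasforParacontra}\eqref{t:invMeasforParacontra ii} for $\Supp\pi^{\mu_0}\subset C$, exactly as you do. Your explicit handling of case \eqref{t:msr convergence, necessary lin+} by telescoping the one-step bounds is just a spelled-out version of the same estimate, so no substantive difference.
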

\begin{proof}
In both cases, the proof of convergence with the respective rates follows exactly the proof of 
convergence in Theorem \ref{t:msr convergence}.   
For the last statement, Theorem \ref{t:invMeasforParacontra}\eqref{t:invMeasforParacontra ii}  establishes that 
$\Supp\pi^{\mu_0}\subset C$, which completes the 
proof.
\end{proof}

The following is a generalization of \cite[Theorem 3.15]{HerLukStu19a}.
\begin{thm}[necessity of metric subregularity for monotone sequences]
\label{t:msr necessary}
Let $G\subset\Ecal$ be compact.  Let the Markov operator corresponding to 
Algorithm \eqref{algo:sbi}, $\map{\Pcal}{\mathscr{P}_2(G)}{\mathscr{P}_2(G)}$,
be a paracontraction with respect to the weighted Wasserstein metric $d_{W_{2,\pbf}}$.  
Suppose  
all sequences  
$(\mu_k)_{k\in\mathbb{N}}$ corresponding to Algorithm \ref{algo:sbi}
and initialized in $\mathscr{P}_2(G)$ are gauge monotone 
relative to $\inv\Pcal$ 
with rate $\theta$ satisfying \eqref{eq:theta} and at least one of the conditions in 
Assumption \ref{ass:msr convergence}.  Suppose, in addition, that  
$(\Id - \theta)^{-1}(\cdot)$ is continuous on $\Rbb_+$, strictly increasing, 
and $(\Id - \theta)^{-1}(0)=0$.  Then $\Psi$ defined by 
\eqref{eq:Psi} is gauge metrically subregular for $0$ relative to 
$\mathscr{P}_2(G)$ on $\mathscr{P}_2(G)$ 
with gauge $\rho(\cdot)=(\Id-\theta)^{-1}(\cdot)$, i.e. $\Psi$ satisfies 
\eqref{e:metricregularity}.
\end{thm}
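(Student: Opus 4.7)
The plan is a converse-implication argument of a type standard in metric regularity theory: we will show that one-step gauge monotonicity, combined with the triangle inequality and a synchronous-coupling bound for $d_{W_{2,\pbf}}(\mu,\mu\Pcal)$, directly yields the subregularity estimate with gauge $\rho=(\Id-\theta)^{-1}$. Since the theorem lives in the consistent-feasibility subsection, I will use the simplification \eqref{eq:Psi_StoPBForBS} of $\Psi$ throughout.

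First, I would fix an arbitrary $\mu\in\mathscr{P}_2(G)$ and instantiate the gauge monotonicity hypothesis on the particular sequence $\mu_0=\mu$, $\mu_{k+1}=\mu_k\Pcal$. The first step of \eqref{e:mu-uniform mon} reads
\begin{equation*}
d_{W_{2,\pbf}}(\mu\Pcal,\inv\Pcal)\;\leq\;\theta\bigl(d_{W_{2,\pbf}}(\mu,\inv\Pcal)\bigr).
\end{equation*}
Combined with the triangle inequality $d_{W_{2,\pbf}}(\mu,\inv\Pcal)\leq d_{W_{2,\pbf}}(\mu,\mu\Pcal)+d_{W_{2,\pbf}}(\mu\Pcal,\inv\Pcal)$, this rearranges to
\begin{equation*}
(\Id-\theta)\bigl(d_{W_{2,\pbf}}(\mu,\inv\Pcal)\bigr)\;\leq\;d_{W_{2,\pbf}}(\mu,\mu\Pcal).
\end{equation*}

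Second, I would dominate $d_{W_{2,\pbf}}(\mu,\mu\Pcal)$ by $\Psi(\mu)$ via the synchronous coupling. If $X\sim\mu$ is drawn independently of $\xi$, then $(X,\Phi(X,\xi))$ is a coupling of $\mu$ with $\mu\Pcal$, so
\begin{equation*}
d_{W_{2,\pbf}}^{2}(\mu,\mu\Pcal)\;\leq\;\int_{G}\Ebb\bigl[\|x-T_\xi x\|_\pbf^2\bigr]\,\mu(dx).
\end{equation*}
Exactly the block-wise calculation used in the proof of Theorem~\ref{t:afneie} (telescoping the weights $\eta_i\,\chi_{M_i}(j)$ against $1/p_j$) collapses the inner expectation to $\|x-T_1 x\|^2$. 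Under the operative paracontraction hypothesis, Theorem~\ref{t:invMeasforParacontra}\eqref{t:invMeasforParacontra ii} identifies $\inv\Pcal$ with measures supported on $C$, so \eqref{eq:Psi_StoPBForBS} holds with equality and gives $\int_G\|x-T_1 x\|^2\,\mu(dx)=\Psi(\mu)^2$. Thus $d_{W_{2,\pbf}}(\mu,\mu\Pcal)\leq\Psi(\mu)$.

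Third, I would chain the two bounds. The hypothesis that $(\Id-\theta)^{-1}$ is continuous, strictly increasing, and vanishes at $0$ forces $\Id-\theta$ to be strictly increasing on its domain of interest, so applying $(\Id-\theta)^{-1}$ monotonically to the chain $(\Id-\theta)(d_{W_{2,\pbf}}(\mu,\inv\Pcal))\leq d_{W_{2,\pbf}}(\mu,\mu\Pcal)\leq\Psi(\mu)$ yields
\begin{equation*}
d_{W_{2,\pbf}}(\mu,\inv\Pcal)\;\leq\;(\Id-\theta)^{-1}\bigl(\Psi(\mu)\bigr),
\end{equation*}
which is exactly \eqref{e:metricregularity} with $\rho=(\Id-\theta)^{-1}$. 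The gauge-function conditions (continuity, strict monotonicity, $\rho(0)=0$) are immediate from the hypotheses; the requirement $\lim_{t\to\infty}\rho(t)=\infty$ is irrelevant on the range of $\Psi$ since $G$ is compact and $\Psi(\mu)$ is bounded, but in any case can be enforced by extending $\rho$ linearly beyond the relevant range. The identity $\Psi^{-1}(0)=\inv\Pcal$ required in the statement of metric subregularity follows from the simplified form of $\Psi$ together with the characterization $\inv\Pcal=\{\pi:\Supp\pi\subset C\}$.

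The main obstacle is the step bounding $d_{W_{2,\pbf}}(\mu,\mu\Pcal)\leq\Psi(\mu)$: the general definition of $\Psi$ couples \emph{two} measures through the transport discrepancy $\psi_\pbf$, while $d_{W_{2,\pbf}}(\mu,\mu\Pcal)$ is a one-measure object, and the two can be cleanly matched only because consistency collapses $\psi_\pbf(x,y,T_\xi x,T_\xi y)$ to $\|x-T_\xi x\|_\pbf^2$ when $y\in\Supp\pi\subset C$. Outside the consistent setting this identification is not available, which is why the result is stated in Section~\ref{s:consistent2}.
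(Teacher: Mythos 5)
Your proposal is correct and follows essentially the same route as the paper's proof: one-step gauge monotonicity plus the triangle inequality gives $(\Id-\theta)\bigl(d_{W_{2,\pbf}}(\mu,\inv\Pcal)\bigr)\leq d_{W_{2,\pbf}}(\mu,\mu\Pcal)$, the synchronous coupling together with the consistent-case identity \eqref{eq:Psi_StoPBForBS} (tight because $\inv\Pcal=\mathscr{C}$ by Theorem \ref{t:invMeasforParacontra}) yields $d_{W_{2,\pbf}}(\mu,\mu\Pcal)\leq\Psi(\mu)$, and inverting $\Id-\theta$ gives \eqref{e:metricregularity} with $\rho=(\Id-\theta)^{-1}$. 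The only cosmetic difference is that you use the set-distance triangle inequality directly where the paper passes through the metric projection onto $\inv\Pcal$, which changes nothing of substance.
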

\begin{proof}
If the sequence $(\mu_k)_{k\in\Nbb}$ is gauge monotone
relative to $\inv\Pcal$ 
with rate $\theta$ satisfying \eqref{eq:theta} and at least one of the conditions in 
Assumption \ref{ass:msr convergence}, 
then by the triangle inequality 
\begin{eqnarray}
d_{W_{2,\pbf}}(\mu_{k+1},\mu_k)&\geq& d_{W_{2,\pbf}}(\mu_k, \mubar_{k+1}) - d_{W_{2,\pbf}}(\mu_{k+1}, \mubar_{k+1}) \nonumber\\
&\geq& d_{W_{2,\pbf}}(\mu_k, \inv\Pcal) - d_{W_{2,\pbf}}(\mu_{k+1}, \inv\Pcal) \nonumber\\
&\geq& d_{W_{2,\pbf}}(\mu_k, \inv\Pcal) - \theta\paren{d_{W_{2,\pbf}}(\mu_k, \inv\Pcal)}\geq 0 \quad  \forall k\in \Nbb,
\label{e:Robin}
\end{eqnarray}
where $\mubar_{k+1}$ is a metric projection of $\mu_{k+1}$ onto $\inv\Pcal$ (exists since $\inv\Pcal$ is closed
in $\mathscr{P}_2(G)$).  
On the other hand, by Theorem \ref{t:invMeasforParacontra}\eqref{t:invMeasforParacontra ii}, 
inequality \eqref{eq:Psi_StoPBForBS} is tight, so $\Psi^{-1}(0) = \inv\Pcal$ and 
\begin{eqnarray}
\Psi(\mu_k) = \paren{\int_G\|x-T_1x\|^2\mu_k(dx)}^{1/2} &\geq&  
\inf_{\gamma\in C(\mu_k\Pcal, \mu_k)}\paren{\int_{G\times G}\|x-y\|_\pbf^2\gamma(dx,dy)}^{1/2}\nonumber\\
&=& d_{W_{2,\pbf}}(\mu_{k+1}, \mu_k) 
\quad\forall k\in\Nbb.\label{e:Hood}
\end{eqnarray}
Combining \eqref{e:Robin} and \eqref{e:Hood} yields
\begin{equation}\label{e:dumb}
d(0,\Psi(\mu_k)) = \Psi(\mu_k)\geq d_{W_{2,\pbf}}(\mu_k, \Psi^{-1}(0)) - 
\theta\paren{d_{W_{2,\pbf}}(\mu_k, \Psi^{-1}(0))} \quad  \forall k\in \Nbb.
 \end{equation}
By assumption $(\Id - \theta)^{-1}(\cdot)$ is continuous on $\Rbb_+$, 
strictly increasing, 
and $(\Id - \theta)^{-1}(0)=0$, so 
\begin{equation}\label{e:dumber}
(\Id - \theta)^{-1}\paren{d(0,\Psi(\mu_k))}\geq d_{W_{2,\pbf}}(\mu_k, \Psi^{-1}(0)) 
\quad  \forall k\in \Nbb.
 \end{equation}
Since this holds for {\em any} sequence $(\mu_k)_{k\in\Nbb}$ initialized in $\mathscr{P}_2(G)$
and these converge by Proposition \ref{t:rm and qafne to convergence} to points in $\inv\Pcal\cap\mathscr{P}_2(G)$, 
we conclude that $\Psi$ is metrically subregular for $0$ on $\mathscr{P}_2(G)$ with 
gauge $\rho=(\Id - \theta)^{-1}$.  
\end{proof}

\section{Block-Stochastic Splitting for Composite Optimization}\label{s:sbs}
We return now to stochastic blockwise  methods for solving \eqref{e:P1}.  
It is already understood that the critical points of $f + \sum_{j=1}^m g_j$, 
denoted $\crit(f + \sum_{j=1}^m g_j)$, are 
fixed points of the deterministic, non-block versions of Algorithms \ref{algo:sblfb} and 
\ref{algo:sbdr}; and fixed points of the deterministic, non-block versions of these algorithms 
are invariant distributions corresponding to iterates of these same stochastic blockwise algorithms.  
When $\inv \Pcal = \mathscr{C}$ defined by 
\eqref{eq:stoch_feas_probl2}, then in fact  
any $\xbar\in C\equiv \set{x}{\mathbb{P}(x\in\Fix T_\xi)=1}$
is almost surely at least a stationary point.  
This leads to the following elementary observations. 
\begin{lem}\label{t:inv P - crit pts}
 Let $T_i$ defined by either \eqref{e:TFB_i} (if $f$ is differentiable) or \eqref{e:TDR_i} 
 be single-valued on $\Ecal$ and let 
 $\Pcal$ be the Markov operator with update function $T_i$.  Then 
 $\crit(f + \sum_{j=1}^m g_j)\subset S\equiv \bigcup_{\pi\in\inv\Pcal}\Supp\pi$, and 
 if $f$ and $g_j$ ($j=1,\dots, m$) are convex, then 
 $\xbar\in C$ if and only if $\xbar\in \crit(f + \sum_{j=1}^m g_j)$ almost surely.  
\end{lem}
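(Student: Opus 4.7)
The plan is to exploit the partial separability structure \eqref{e:gi} to reduce fixed-point conditions for $T_i$ to blockwise optimality conditions on $f$ and $h_j$, and then combine these across $i$ using the indexing hypothesis that $\bigcup_{i \in \Ibb} M_i = 2^{\{1,\dots,m\}} \setminus \emptyset$.

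For the first inclusion $\crit(f + \sum_{j=1}^m g_j) \subset S$, I would proceed pointwise. Fix $\xbar \in \crit(f + \sum_{j=1}^m g_j)$, so $0 \in \nabla f(\xbar) + \sum_{j=1}^m \partial g_j(\xbar)$ (replacing $\nabla f$ by $\partial f$ in the DR case). Because $g_j(x) = h_j(x_j)$, the limiting subgradient $\partial g_j(\xbar)$ lies in the canonical embedding $\partial h_j(\xbar_j) \oplus \{0\}$, so the summed inclusion decouples blockwise into $0 \in \nabla_{x_j} f(\xbar) + \partial h_j(\xbar_j)$ for each $j$. The first key step is to translate each blockwise inclusion into the resolvent identity $\xbar_j \in J_{\partial h_j, t_j}(\xbar_j - t_j \nabla_{x_j} f(\xbar))$ in the FB case, and into the analogous identity $\xbar_j \in \tfrac{1}{2}\bigl(R_{\sd_j f, t_j} R_{\partial h_j, t_j} + \Id\bigr)(\xbar_j)$ in the DR case, which by \eqref{e:TFB_i} and \eqref{e:TDR_i} together with property \eqref{t:afneie a} of Theorem \ref{t:afneie} yields $T_i \xbar = \xbar$ for every $i \in \Ibb$. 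Hence $\delta_{\xbar}$ satisfies $\delta_{\xbar}\Pcal = \delta_{\xbar}$, so $\delta_{\xbar} \in \inv \Pcal$ and $\xbar \in \Supp(\delta_{\xbar}) \subset S$.

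For the second statement, the convexity of $f$ and $g_j$ upgrades subgradient inclusions to two-sided equivalences via the monotonicity of $\sd h_j$ and $\nabla f$. One direction, $\xbar \in \crit \Rightarrow \xbar \in C$, follows from the blockwise argument above (in fact $\xbar \in \Fix T_i$ for \emph{every} $i$, hence a fortiori $\mathbb{P}(\xbar = T_\xi \xbar) = 1$). For the converse, suppose $\xbar \in C$, so $\xbar \in \Fix T_i$ for $\mathbb{P}^\xi$-almost every $i \in \Ibb$. The second key step is the covering argument: for each $j \in \{1,\dots,m\}$, by the indexing assumption $\cup_{i \in \Ibb} M_i = 2^{\{1,\dots,m\}}\setminus\emptyset$, there exists at least one $i$ with $j \in M_i$ and $\mathbb{P}^\xi(\{i\}) > 0$ (indices with zero probability may be discarded from $\Ibb$ without loss). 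On $\Fix T_i$, the blockwise update identity in \eqref{e:TFB_i} (resp.\ \eqref{e:TDR_i}) forces $\xbar_j \in J_{\partial h_j, t_j}(\xbar_j - t_j \nabla_{x_j} f(\xbar))$, which in the convex case (by single-valuedness of $\prox$ and monotonicity) is equivalent to $0 \in \nabla_{x_j} f(\xbar) + \partial h_j(\xbar_j)$. Summing these blockwise inclusions over $j=1,\dots,m$ reassembles the critical point condition $0 \in \nabla f(\xbar) + \sum_{j=1}^m \partial g_j(\xbar)$.

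The main obstacle I anticipate is not the FB case, which is essentially just a rewriting of the resolvent identity, but the Douglas--Rachford case: the fixed point identity for $T^{DR}_i$ involves the composition $R_{\sd_j f, t_j} R_{\partial h_j, t_j}$ of reflectors and does not immediately expose the critical point inclusion. The standard workaround is to introduce the auxiliary ``shadow'' $y_j \in J_{\partial h_j, t_j}(\xbar_j)$ and show that $\xbar_j$ is a fixed point of $\tfrac12(R_{\sd_j f, t_j} R_{\partial h_j, t_j} + \Id)$ iff $0 \in \nabla_{x_j} f(y) + \partial h_j(y_j)$ at the shadow point $y$, which in the convex case is exactly the blockwise criticality of $y$; one then verifies that $y = \xbar$ on $\Fix T^{DR}_i$ under the separable structure of $g$. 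Modulo this standard reduction, the two statements follow by assembling the blockwise equivalences.
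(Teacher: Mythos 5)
Your forward--backward half is sound and is, in substance, exactly the reasoning the paper itself relies on: the lemma is stated there without a formal proof, justified only by the preceding remark that critical points are fixed points of the deterministic algorithm and that such fixed points yield invariant delta measures. With $f$ smooth and $g_j(x)=h_j(x_j)$, criticality decouples exactly into $0\in\nabla_{x_j}f(\xbar)+\partial h_j(\xbar_j)$ for every $j$ (the sum rule is exact for smooth plus separable), this is equivalent to the blockwise resolvent identity, hence $\xbar\in\Fix T^{FB}_i$ for all $i$, $\delta_{\xbar}\in\inv\Pcal$ and $\xbar\in S$; the converse under convexity via your covering argument is legitimate because \eqref{e:pj} already guarantees $p_j>0$ for every block.

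The genuine gap is in your Douglas--Rachford reduction, at precisely the step you flag as ``standard'': the verification that the shadow equals the iterate, $y=\xbar$ on $\Fix T^{DR}_i$, is false, and separability of $g$ does not rescue it. Take $m=1$, $\Ecal=\Rbb$, $t=1$, $f(x)=\tfrac12(x-1)^2$, $h(x)=\tfrac12(x+1)^2$. Then $J_{\partial h,1}(x)=(x-1)/2$, so $R_{\partial h,1}\equiv-1$, and $R_{\sd f,1}\equiv 1$, whence $T^{DR}(x)=\tfrac12(1+x)$ and $\Fix T^{DR}=\{1\}$, while the unique critical point of $f+h$ is $0=J_{\partial h,1}(1)\neq 1$. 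In general a point $z$ is fixed for the (blockwise) DR map iff its shadow $y_j=J_{\partial h_j,t_j}(z_j)$ satisfies the blockwise criticality condition; critical points are the images of fixed points under the resolvent, not fixed points themselves. Consequently your delta-measure argument does not go through in the DR case: in the example above $\delta_0$ is not invariant (the only invariant measure is $\delta_1$), so the inclusion $\crit(f+\sum_j g_j)\subset S$ cannot be reached along your route; any correct treatment of the DR half --- and, frankly, any precise reading of the lemma for \eqref{e:TDR_i} --- has to carry the pair $\paren{z,\,J_{\partial g_j,t_j}(z)}$ through the argument and phrase both the inclusion into $S$ and the characterization of $C$ via this resolvent correspondence, rather than identifying $\xbar$ with its shadow.
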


\subsection{Regularity}
In this section we determine the regularity of the blockwise mappings $T_i$ for the two cases 
\eqref{e:TFB_i} and \eqref{e:TDR_i}.  In Theorem \ref{t:afneie}, the regularity constants $\epsilon_i$
and $\alpha_i$ are bounded above by the constants of $T_1$, which is the mapping including
all of the blocks.  It suffices, then, to determine the regularity of $T_1$ for the two cases 
\eqref{e:TFB_i} and \eqref{e:TDR_i}.

\begin{prop}[regularity of partial resolvents]\label{t:f_j}
    For $j=1,2,\dots, m$, for each vector of parameters $x\in G\subset \Ecal$, let 
    $\map{f_j(\cdot;x)}{G_j\subset \Ecal_j}{\extre}$ defined by \eqref{e:f_j} be subdifferentially regular 
    with subdifferentials satisfying 
    \begin{eqnarray}\exists \tau_{f_j}\geq 0: &&\forall x\in G, \forall u_j, v_j\in  G_j, 
    ~\forall z_j\in t_j\sd f_j(u_j; x), w_j\in t_j\sd f_j(v_j; x), \nonumber \\
        &&-\tfrac{\tau_{f_j}}{2}\norm{(u_j+z_j)
        -(v_j+w_j)}^2\nonumber\\
        &&\qquad\qquad \qquad\qquad \leq \ip{z_j-w_j}{u_j-v_j}.
        \label{e:sd f submon}
    \end{eqnarray}%
    For $f_t(u; x)\equiv \sum_{j=1}^m t_j f_j(u_j;x)$, the resolvent 
    $J_{\partial f_t, 1}$ is a$\alpha$-fne with constant 
    $\alpha_f=1/2$ and violation $\tau_f = \max_j\{\tau_{f_j}\}$ on $G$.  
    If $f_j$ is convex on $\Ecal_j$ for each $j=1,2,\dots,m$, 
    then $J_{\partial f_t, 1}$ is $\alpha$-fne with constant $\alpha_f=1/2$ and no violation on $\Ecal$. 
\end{prop}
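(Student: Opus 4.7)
The plan is to exploit block-separability of $f_t$ across the orthogonal subspaces $\{\Ecal_j\}$ together with the block-level submonotonicity hypothesis \eqref{e:sd f submon}. For any fixed parameter $x \in G$, the function $u \mapsto f_t(u;x) = \sum_j t_j f_j(u_j;x)$ is a sum of functions whose variables live in orthogonal subspaces, so its limiting subdifferential and hence its resolvent decompose coordinatewise:
\[
J_{\partial f_t(\cdot;x),1}(u) \;=\; \bigoplus_{j=1}^m J_{t_j \partial f_j(\cdot;x),1}(u_j).
\]
This reduces the problem to a block-by-block analysis with the parameter $x$ held fixed throughout, which is exactly what is needed in order to invoke \eqref{e:sd f submon}, whose quantifier ``$\forall x\in G$'' enforces a common parameter.

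Next I would fix $j$, take inputs $a_j, b_j \in G_j$, and arbitrary selections $y_j \in J_{t_j \partial f_j(\cdot;x),1}(a_j)$, $y'_j \in J_{t_j \partial f_j(\cdot;x),1}(b_j)$. The defining inclusions $a_j - y_j \in t_j \partial f_j(y_j;x)$ and $b_j - y'_j \in t_j \partial f_j(y'_j;x)$ involve subgradients of the same function $f_j(\cdot;x)$, so \eqref{e:sd f submon} applies with $u_j = y_j$, $v_j = y'_j$, $z_j = a_j - y_j$, $w_j = b_j - y'_j$, giving
\[
-\tfrac{\tau_{f_j}}{2}\,\|a_j - b_j\|^2 \;\leq\; \langle (a_j - y_j) - (b_j - y'_j),\, y_j - y'_j\rangle.
\]
A short polarization identity on the right-hand side then rearranges this into
\[
\|y_j - y'_j\|^2 \;\leq\; (1 + \tau_{f_j})\|a_j - b_j\|^2 - \|(a_j - y_j) - (b_j - y'_j)\|^2,
\]
which, by the Euclidean representation \eqref{eq:nice ineq} of the transport discrepancy, is exactly the a$\alpha$-fne inequality \eqref{eq:paafne} with constant $\alpha_f = 1/2$ and violation $\tau_{f_j}$ on $G_j$.

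Aggregating across blocks, using orthogonality ($\|a-b\|^2 = \sum_j \|a_j - b_j\|^2$ and likewise for $\|y-y'\|^2$ and for the transport discrepancy) and the bound $\tau_{f_j} \leq \tau_f := \max_j \tau_{f_j}$, yields
\[
\|y - y'\|^2 \;\leq\; (1 + \tau_f)\|a - b\|^2 - \|(a - y) - (b - y')\|^2,
\]
which establishes the claimed a$\alpha$-fne property of $J_{\partial f_t,1}$ on $G$ with constant $1/2$ and violation $\tau_f$. For the convex case, each $\partial f_j(\cdot;x)$ is maximal monotone, so \eqref{e:sd f submon} holds with $\tau_{f_j}=0$ and the resolvent is single-valued on all of $\Ecal_j$ by Minty's theorem; the same derivation then delivers ordinary $\alpha$-firm nonexpansivity with $\alpha=1/2$ on $\Ecal$.

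The main technical point to watch is that the resolvent is set-valued in the nonconvex setting, so the a$\alpha$-fne estimate must be proved for \emph{arbitrary} selections $y_j, y'_j$; this is already accommodated by the ``$\forall z_j, w_j$'' quantifiers in \eqref{e:sd f submon}. A secondary nuisance is that one must verify that $f_t(\cdot;x)$ remains subdifferentially regular so that $\partial f_t(\cdot;x) = \bigoplus_j t_j \partial f_j(\cdot;x)$ holds with equality and the direct-sum decomposition of the resolvent is valid; this follows from subdifferential regularity of each $f_j(\cdot;x)$ assumed at the outset and the orthogonality of the block subspaces.
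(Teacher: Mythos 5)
Your proof is correct, and it reaches the same destination by a mildly different, more self-contained route. The paper aggregates first at the level of subdifferentials: it notes that $\sd f_t(\cdot;x)$ decomposes blockwise, sums the blockwise inequalities \eqref{e:sd f submon} to get submonotonicity of $\sd f_t$ with constant $\tau_f=\max_j\{\tau_{f_j}\}$, and then invokes \cite[Proposition 2.3(iv)]{LukNguTam18} once, which states precisely the equivalence between that submonotonicity condition and the a$\alpha$-fne property of the resolvent with constant $1/2$ and violation $\tau_f$. You instead decompose the resolvent $J_{\partial f_t,1}=\bigoplus_j J_{t_j\sd f_j(\cdot;x),1}$, re-derive the submonotonicity-to-a$\alpha$-fne implication per block by hand (your polarization step with $p=y_j-y_j'$, $q=(a_j-y_j)-(b_j-y_j')$ is exactly the content of the cited lemma for $\alpha=1/2$), and only then aggregate the a$\alpha$-fne inequalities using orthogonality and $\tau_{f_j}\le\tau_f$. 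The two orders of operations are equivalent; what yours buys is independence from the external lemma and an explicit treatment of set-valuedness (arbitrary selections) and, in the convex case, of nonemptiness/single-valuedness of the resolvent on all of $\Ecal$ via maximal monotonicity and Minty, which the paper dispatches with the terse remark that the convex case ``follows from monotonicity.'' The only caveat, shared equally by the paper's own argument, is the implicit domain bookkeeping: the hypothesis \eqref{e:sd f submon} is imposed at points $u_j,v_j\in G_j$, while in the resolvent estimate the subgradients are evaluated at the resolvent outputs, so one tacitly assumes these outputs lie where the hypothesis applies; this is at the paper's own level of rigor and not a gap specific to your argument.
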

\noindent Condition  \eqref{e:sd f submon} generalizes the notion of hypomonotonicity \cite{VA}
and is satisfied by any {\em prox-regular} function.  

\begin{proof}
    By  \cite[Proposition 2.3(iv)]{LukNguTam18}, condition \eqref{e:sd f submon} 
    is equivalent to $J_{\partial f_j, t_j}$ being 
    a$\alpha$-fne on $G_j$ with constant $\alpha_{f_j} = 1/2$ and 
    violation $\tau_{f_j}$.  
    Extending this, for $f_t(x) \equiv \sum_{j=1}^m t_jf_j(u_j;x)$ we have
    $\sd f_t(u; x) = \ecklam{\ecklam{t_1\sd_{u_1}f_1(u_1; x)}^T,\dots,\ecklam{t_m\sd_{u_m}f_m(u_m; x)}^T }^T$ and 
    \begin{eqnarray*}&&\forall v,u\in \subset G,\mbox{ for } z\equiv\sd f_t(u;x), w\equiv\sd f_t(v;x), \quad\nonumber\\ 
    &&\ip{z-w}{u-v}=
        \sum_{j=1}^m\ip{z_j-w_j}{u_j-v_j}_{G_j}\nonumber\\
        &&\qquad\qquad\qquad\quad\geq\sum_{j=1}^m\tfrac{-\tau_{f_j}}{2}\norm{(u_j+z_j)-(v_j+w_j)}_{G_j}^2\nonumber\\
        &&\qquad\qquad\qquad\quad\geq\tfrac{-\max_{j}\{\tau_{f_j}\}}{2}\sum_{j=1}^m\norm{(u_j+z_j)-(v_j+w_j)}_{G_j}^2\nonumber\\
        &&\qquad\qquad\qquad\quad=\tfrac{-\tau_{f}}{2}\norm{(u+z)-(v+w)}^2.
    \end{eqnarray*}
    Application of \cite[Proposition 2.3(iv)]{LukNguTam18} to $f_t$ establishes the claim. The convex statement
    follows from monotonicity of the gradient.  
\end{proof}

\noindent The following corollary is just the specialization of Proposition \ref{t:f_j} to the 
case that $f_j(\cdot;x)$ is independent of the parameter $x$.  
\begin{cor}[regularity of resolvents of block separable functions]\label{t:h_j}
    In the setting of Proposition \ref{t:f_j} let $\map{h_j(\cdot)}{G_j\subset \Ecal_j}{\extre}$ satisfy
    \begin{eqnarray}\exists \tau_{h_j}\geq 0: &&\forall x_j, y_j\in  \Ecal_j, 
    ~\forall z_j\in t_j\sd h_j(x_j), w_j\in t_j\sd h_j(y_j), \nonumber \\
        \label{e:sd g submon}&&-\tfrac{\tau_{h_j}}{2}\norm{(x_j+z_j)-(y_j+w_j)}^2\leq \ip{z_j-w_j}{x_j-y_j}.
    \end{eqnarray}%
    Then for $h_t(x)\equiv \sum_{j=1}^m t_j h_j(x_j)$, the resolvent $J_{\partial h_t, 1}$ 
    is a$\alpha$-fne with constant $\alpha_h=1/2$ and violation $\tau_h = \max_j\{\tau_{h_j}\}$ on $G$.  
    If $h_j$ is convex on $\Ecal_j$ for each $j=1,2,\dots,m$, 
    then $J_{\partial h_t, 1}$ is $\alpha$-fne with constant $\alpha_h=1/2$ and no violation on $\Ecal$. 
\end{cor}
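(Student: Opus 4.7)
The plan is to invoke Proposition \ref{t:f_j} directly by making the identification $f_j(\cdot; x) \equiv h_j(\cdot)$ for each $j=1,2,\dots,m$. This is legitimate because a function that happens to be independent of the parameter $x$ trivially fits into the parameter-dependent framework of Proposition \ref{t:f_j}; no subtlety is introduced by the degeneracy in the parameter.

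First, I would verify that condition \eqref{e:sd f submon} holds for this choice of $f_j$. Since $f_j(\cdot; x) = h_j(\cdot)$ has no dependence on $x$, one has $t_j \sd f_j(u_j; x) = t_j \sd h_j(u_j)$ for every $x \in G$, and the submonotonicity inequality \eqref{e:sd f submon} reduces word-for-word to \eqref{e:sd g submon} with $\tau_{f_j} = \tau_{h_j}$. Subdifferential regularity of $h_j$ is already part of the standing assumptions on the problem \eqref{e:P1}.

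Second, I would observe that the summed function $f_t(u;x) \equiv \sum_{j=1}^m t_j f_j(u_j;x)$ coincides with $h_t(u) \equiv \sum_{j=1}^m t_j h_j(u_j)$ under this identification, so the resolvents $J_{\partial f_t, 1}$ and $J_{\partial h_t, 1}$ are literally the same mapping. Applying Proposition \ref{t:f_j} then yields that $J_{\partial h_t, 1}$ is a$\alpha$-fne on $G$ with constant $\alpha_h = 1/2$ and violation $\tau_h = \max_j\{\tau_{h_j}\}$, which is exactly the first conclusion of the corollary.

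For the convex case, I would again appeal to the second statement of Proposition \ref{t:f_j}: whenever each $h_j$ is convex on $\Ecal_j$, so too is $f_j(\cdot; x) = h_j(\cdot)$ for every parameter value, and Proposition \ref{t:f_j} delivers that $J_{\partial h_t, 1}$ is $\alpha$-fne with constant $1/2$ and no violation on all of $\Ecal$. There is no real obstacle in this proof — the content of the corollary lies entirely in recognizing it as a specialization — and accordingly the proof ought to be one or two sentences in the final write-up.
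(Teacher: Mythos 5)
Your proposal is correct and matches the paper exactly: the paper itself introduces this corollary with the remark that it ``is just the specialization of Proposition \ref{t:f_j} to the case that $f_j(\cdot;x)$ is independent of the parameter $x$,'' which is precisely your identification $f_j(\cdot;x)\equiv h_j(\cdot)$ making \eqref{e:sd f submon} collapse to \eqref{e:sd g submon} and $f_t$ coincide with $h_t$. Nothing further is needed.
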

\begin{prop}[regularity of gradient descent]\label{t:T_GD}
		Let $\map{f}{\Ecal}{\Rbb}$ be continuously differentiable with blockwise Lipschitz and 
		hypomonotone gradient, that is $f$ satisfies 
		\begin{subequations}\label{e:f reg}
		 \begin{eqnarray}
            \forall j=1,2,\dots,m, ~\exists L_j>0:\quad 
            \sum_{j=1}^m\|\nabla_{x_j} f(x) - \nabla_{x_j} f(y)\|^2&\leq& \sum_{j=1}^m L^2_j\|x_j-y_j\|^2\nonumber\\
            &&\forall x,y\in\Ecal,
            \label{e:f grad-Lip}
		 \end{eqnarray}
        and 
        \begin{eqnarray} 
            \forall j=1,2,\dots,m, ~\exists \tau_{f_j}\geq 0:\sum_{j=1}^m-\tau_{f_j}\norm{x_j-y_j}^2&\leq& 
            \sum_{j=1}^m\ip{\nabla_{x_j} f(x)-\nabla_{x_j} f(y)}{x_j-y_j}\nonumber\\
            &&
            \qquad\qquad\qquad\qquad\qquad\forall x,y\in\Ecal.
            \label{e:hypomonotone}
        \end{eqnarray}
		\end{subequations}
		Then the gradient descent mapping with blockwise heterogeneous
		step lengths defined by\\
		$T_{GD}\equiv \Id - \bigoplus_{j=1}^m t_j\nabla_{x_j} f$ 
is a$\alpha$-fne on $\Ecal$ with violation at most 
\begin{subequations}
\begin{equation}\label{e:nabla f aalph-fne violation}
\epsilon_{GD} = \max_{j}\left\{2t_j\tau_j + \tfrac{t_j^2L_j^2}{\alphabar}\right\}<1, 
\quad\mbox{ with constant }\quad  \alphabar=\max_j\{\alpha_j\}
\end{equation} 
whenever the blockwise steps $t_j$ satisfy 
\begin{equation}\label{e:nabla f aalph-fne step}
t_j\in\paren{0,\frac{\alphabar\sqrt{\tau_j^2+ L_j^2} - \alphabar\tau_j}{L_j^2}}. 
\end{equation} 
\end{subequations}

If $f$ is convex then, with global step size 
$t_j = t < \tfrac{2\alphabar}{\Lbar}$ ($j=1,2,\dots,m$) 
for $\alphabar\in (0,1)$ with $\Lbar =\max_j\{L_j\}$,
the gradient descent mapping $T_{GD}$ is $\alpha$-fne with 
constant $\alphabar$ (no violation). 
\end{prop}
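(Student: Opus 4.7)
The plan is to verify the a$\alpha$-fne inequality \eqref{eq:paafne} directly, using the Euclidean identity \eqref{eq:nice ineq} for the transport discrepancy $\psi$. Since $x - T_{GD}x = \bigoplus_{j=1}^m t_j\nabla_{x_j}f(x)$, we have
\[
\psi(x,y,T_{GD}x,T_{GD}y) = \sum_{j=1}^m t_j^2\|\nabla_{x_j}f(x) - \nabla_{x_j}f(y)\|^2,
\]
and expanding $\|T_{GD}x - T_{GD}y\|^2$ blockwise and then adding $\tfrac{1-\alphabar}{\alphabar}\psi$ collects the two squared-norm contributions with coefficient $\tfrac{1}{\alphabar}$, leaving
\[
\|T_{GD}x-T_{GD}y\|^2 + \tfrac{1-\alphabar}{\alphabar}\psi = \|x-y\|^2 - 2\sum_{j=1}^m t_j\langle\nabla_{x_j}f(x) - \nabla_{x_j}f(y),\, x_j - y_j\rangle + \tfrac{1}{\alphabar}\sum_{j=1}^m t_j^2\|\nabla_{x_j}f(x) - \nabla_{x_j}f(y)\|^2.
\]

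Next I would apply the hypomonotonicity \eqref{e:hypomonotone} and blockwise Lipschitz \eqref{e:f grad-Lip} assumptions termwise to bound the cross term above by $2\sum_j t_j\tau_{f_j}\|x_j-y_j\|^2$ and the quadratic term above by $\tfrac{1}{\alphabar}\sum_j t_j^2 L_j^2\|x_j-y_j\|^2$. Pulling out the worst block then yields the pointwise estimate
\[
\|T_{GD}x-T_{GD}y\|^2 + \tfrac{1-\alphabar}{\alphabar}\psi \leq \Bigl(1 + \max_{j}\bigl\{2t_j\tau_{f_j} + \tfrac{t_j^2L_j^2}{\alphabar}\bigr\}\Bigr)\|x-y\|^2,
\]
which identifies the violation as $\epsilon_{GD}$. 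The step-size range \eqref{e:nabla f aalph-fne step} is recovered by solving the quadratic $t_j^2 L_j^2/\alphabar + 2t_j\tau_{f_j} - 1 < 0$ in $t_j>0$ and taking its positive root. For the convex case, $\tau_{f_j}=0$ makes the cross term non-positive by monotonicity of $\nabla f$; invoking the Baillon--Haddad theorem on the convex $\bar L$-smooth function $f$ gives the cocoercive bound $\langle\nabla f(x) - \nabla f(y),\, x-y\rangle \geq \tfrac{1}{\bar L}\|\nabla f(x) - \nabla f(y)\|^2$, so with common step $t_j\equiv t$ the right-hand side reduces to $\|x-y\|^2 + \bigl(\tfrac{t^2}{\alphabar} - \tfrac{2t}{\bar L}\bigr)\|\nabla f(x)-\nabla f(y)\|^2$, and $t < 2\alphabar/\bar L$ makes the bracket nonpositive, giving $\alpha$-fne with constant $\alphabar$ and no violation.

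The principal obstacle is the interplay between the per-block expansion above and the \emph{summed} form in which \eqref{e:f reg} is stated: in order to read off a violation that is the maximum over $j$ rather than a crude global quantity, the Lipschitz and hypomonotonicity estimates must be used termwise. This is the natural per-block reading of \eqref{e:f reg} and is consistent with the blockwise gradient-Lipschitz hypothesis stated before \eqref{e:P1}; specializing \eqref{e:f reg} to axis-aligned perturbations $y = x + (y_k - x_k)\bigoplus \{0\}_{\{k\}^\circ}$ extracts the constants $(\tau_{f_k}, L_k)$ per block, and these are then reassembled inside the sum before invoking $\sum_j \|x_j-y_j\|^2 = \|x-y\|^2$. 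A minor additional point is that the stated step-size window is non-empty precisely because $\alphabar \in (0,1)$ can be chosen arbitrarily close to $1$.
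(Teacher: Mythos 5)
Your proposal is correct and is essentially the paper's own argument: the paper carries out the same blockwise expansion with termwise use of \eqref{e:f reg}, merely phrased as almost nonexpansiveness of $\Id-\tfrac{1}{\alphabar}\bigoplus_{j=1}^m t_j\nabla_{x_j}f$ via \cite[Proposition 2.1]{LukNguTam18} (your direct verification of \eqref{eq:paafne} through \eqref{eq:nice ineq} is that computation multiplied by $\alphabar$), and the convex case likewise rests on Baillon--Haddad with a common step $t<2\alphabar/\Lbar$. One tiny caveat: the positive root of $t_j^2L_j^2/\alphabar+2t_j\tau_{f_j}-1=0$ is $\bigl(\sqrt{\alphabar^2\tau_{f_j}^2+\alphabar L_j^2}-\alphabar\tau_{f_j}\bigr)/L_j^2$, slightly larger than the bound in \eqref{e:nabla f aalph-fne step}, so the stated interval is a strictly smaller sufficient region (on which the violation is in fact at most $\alphabar<1$), and your conclusion on that interval still holds.
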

\begin{proof}
 By \cite[Proposition 2.1]{LukNguTam18}, the claim holds  
if and only if $\Id - \tfrac{1}{\alphabar}\bigoplus_{j=1}^m t_j\nabla_{x_j} f$ is almost nonexpansive
on $\Ecal$ with violation at most 
\begin{equation}\label{e:nabla f_i ane eps}
 \epsilon' = \epsilon_{GD}/\alphabar = 
 \tfrac{1}{\alphabar}\max_{j}\left\{2t_j\tau_j + \tfrac{t_j^2L_j^2}{\alphabar}\right\}.
\end{equation}
To see this latter property, since $f$ satisfies \eqref{e:f reg} we have 
		\begin{eqnarray}
   			&&\norm{\paren{x - \tfrac{1}{\alphabar}\bigoplus_{j=1}^m t_j\nabla_{x_j} f(x)} - 
   			\paren{y - \tfrac{1}{\alphabar}\bigoplus_{j=1}^m t_j\nabla_{x_j} 
   			f\paren{y}}}^{2}  \nonumber\\
   			&&\qquad = 
   			\norm{x - y}^{2}- \tfrac{2}{\alphabar}\sum_{j=1}^m t_j\ip{x_j - y_j}%
   			{\nabla_{x_j} f(x) - \nabla_{x_j} f\paren{y}} + \tfrac{1}{\alphabar^2}\sum_{j=1}^m%
   			t_j^2\norm{\nabla_{x_j} f(x) - \nabla_{x_j} f\paren{y}}^{2} \nonumber \\
   			&&\qquad \leq 
            \norm{x - y}^{2}+ \tfrac{2}{\alphabar}\sum_{j=1}^mt_j\tau_j\norm{x_j - y_j}^2 + 
            \tfrac{1}{\alphabar^2}\sum_{j=1}^m%
   			t_j^2L_j^2\norm{x_j - y_j}^{2} \nonumber \\
			&&\qquad  \leq \paren{1 + 
			\tfrac{1}{\alphabar}\max_{j}\left\{2t_j\tau_j + \tfrac{t_j^2L_j^2}{\alphabar}\right\}}
			\norm{x - y}^{2} 
			\label{e:interim1}
		\end{eqnarray}
		for all $x,y\in \Ecal$.  A simple calculation shows that the violation does not exceed $1$ whenever the step $t_j$ is bounded by \eqref{e:nabla f aalph-fne step}.  This proves the result for 
		the nonconvex setting. 
		
		If $f$ is convex, then \cite[Proposition 3.4]{Kartamyschew} shows that a different bound on the steps is possible.  
		Note that 
		by \cite[Corollaire 10]{BaiHad77}
		\begin{eqnarray*}
            \tfrac{1}{L}\|\nabla f(x) - \nabla f(y)\|^2&\leq& 
                \ip{\nabla f(x) - \nabla f(y)}{x-y}
		\end{eqnarray*}
		Let $\alphabar=\max_j\{\alpha_j\}$ with $\alpha_j\in (0,1)$ and $\Lbar =\max_j\{L_j\}$.  
		For $t = \tfrac{2\alphabar}{\Lbar}$  we have 
		$2t = \tfrac{t^2\Lbar}{\alphabar}$ and 
		\begin{eqnarray*}
            &\tfrac{t^2\Lbar}{\alphabar}\tfrac{1}{\Lbar}\|\nabla f(x) - \nabla f(y)\|^2\leq 
                2t\ip{\nabla f(x) - \nabla f(y)}{x-y}&\\
                &\iff&\\
            &\tfrac{1}{\alphabar}\|t\nabla f(x) - t\nabla f(y)\|^2\leq 
                2\ip{t\nabla f(x) - t\nabla f(y)}{x-y}&\\
               &\iff&\\
            &\norm{x-y}^2 + 
            \paren{1+\tfrac{1-\alphabar}{\alphabar}}\|t\nabla f(x) - t\nabla f(y)\|^2&\\
            &\qquad \leq 
                 2\ip{t\nabla f(x) - t\nabla f(y)}{x-y} + \norm{x-y}^2&\\
               &\iff&\\
            &\norm{\paren{x - t\nabla f(x)} - \paren{y - t\nabla f(y)}}^2 
             &\\
            &\qquad \leq \norm{x-y}^2 - \tfrac{1-\alphabar}{\alphabar}\|t\nabla f(x) - t\nabla f(y)\|^2&\\
               &\iff&\\
            &\norm{\paren{x - \bigoplus_{j=1}^m t\nabla_{x_j} f(x)} - 
            \paren{y - \bigoplus_{j=1}^m t\nabla_{x_j} f(y)}}^2 
             &\\
            &\qquad \leq \norm{x-y}^2 - \sum_{j=1}^m\tfrac{1-\alphabar}{\alphabar}
            \|t\nabla_j f(x) - t\nabla_j f(y)\|^2&\\
               &\iff&\\
            &\norm{T_{GD}x - T_{GD}y}^2 
            \leq \norm{x-y}^2 - \tfrac{1-\alphabar}{\alphabar}\psi(x, y, T_{GD}x, T_{GD}y)&           
        \end{eqnarray*}
where the last implication follows from \eqref{eq:nice ineq} with blockwise step $t_j=t$ for all 
$j$ in $T_{GD}$ .

\end{proof}

\begin{remark}
 The violation in the nonconvex case can be controlled by choosing a smaller blockwise step $t_j$.  
 In the convex setting, larger step sizes are possible, but these are limited by the global
 Lipschitz constant $\Lbar$ and the constant $\alphabar$.  Note that the upper bound on the 
 step length suggested by Proposition \ref{t:T_GD} is consistent with the upper bound on the steps in Example \ref{eg:T_i not afne}.  
\end{remark}
\begin{propn}[blockwise composite mappings]\label{t:fb-dr aafne}$~$
Let $G\subset\Ecal$ with $G_j\subset\Ecal_j$ for $j=1,2\,\dots,m$. 
	\begin{enumerate}[(i)]
	\item\label{t:fb-dr aafne ncvx} \textbf{Fully nonconvex.}
		For all $j\in\{1,2,\dots,m\}$ let $\map{f}{G}{\Rbb}$ be
		subdifferentially regular with subdifferential satisfying \eqref{e:sd f submon} and
		let $\map{h_j}{G_j}{\extre}$ be proper, l.s.c., 
		and subdifferentially regular satisfying \eqref{e:sd g submon}.
		\begin{enumerate}[(a)]
          \item\label{t:dr aafne ncvx} The \textit{partial blockwise Douglas-Rachford} mapping $T^{DR}_{i}$ 
          defined by \eqref{e:TDR_i} 
          ($j\in M_i$) is 
          a$\alpha$-fne on $G_{M_i}\bigoplus \{z\}_{M_i^\circ} $
          for any fixed $z_{M_i^\circ}\in G_{M_i^\circ}$ with respective constant and violation 
          \begin{equation}\label{e:alpha_DR}
            \alpha_{DR}= \frac{2}{3}, \mbox{ and }\quad
            \epsilon_{DR} \leq \tau_f+\tau_{h} + \tau_f\tau_{h}
          \end{equation}
          where $\tau_{h} \equiv \max_j\{\tau_{h_j} \}$
          and $\tau_{f} \equiv \max_j\{\tau_{f_j} \}$.
          \item\label{t:fb aafne ncvx} If $f$ is continuously differentiable on $\Ecal$ and satisfies 
          \eqref{e:f reg}, the \textit{partial blockwise forward-backward} mapping $T^{FB}_{i}$ 
          defined by \eqref{e:TFB_i} with step lengths   
          $t_j$ satisfying \eqref{e:nabla f aalph-fne step} ($j\in M_i$) is 
          a$\alpha$-fne on affine subspaces $G_{M_i}\bigoplus \{z\}_{M_i^\circ} $
          for any fixed $z_{M_i^\circ}\in G_{M_i^\circ}$ with respective constant and violation 
          \begin{equation}\label{e:alpha_FB}
            \alpha_{FB}\equiv \frac{2}{1+\tfrac{1}{\max\{\tfrac{1}{2},~ \alphabar \}}}, \mbox{ and }\quad
            \epsilon_{FB} \leq \epsilon_{GD}+\tau_{h} + \epsilon_{GD}\tau_{h}
          \end{equation}
          where $\alphabar \equiv \max_j\{\alpha_j\}$, $\tau_{h} \equiv \max_j\{\tau_{h_j} \}$
          and $\epsilon_{GD}$ is no larger than \eqref{e:nabla f aalph-fne violation}.
          \end{enumerate}
	\item\label{t:fb-dr aafne ncvx-cvx}\label{t:fb averaged cvx-ncvx} \textbf{Partially nonconvex.}  
      For all $j\in\{1,2,\dots,m\}$ let $\map{f}{\Ecal}{\Rbb}$ be
		continuously differentiable with gradient satisfying \eqref{e:f reg} and
        let the functions $h_j$ be convex on $G_j$  
	($j=1,2,\dots,m$).  Then for all $i\in\Ibb$, $T^{FB}_i$ 
        is a$\alpha$-fne on $G_{M_i}\bigoplus \{z\}_{M_i^\circ}$ for any 
        $z_{M_i^\circ}\in G_{M_i^\circ}$ 
        with constant $\alpha_{FB}$ given by \eqref{e:alpha_FB}, violation  $\epsilon_{FB}$  
        at most $\epsilon_{GD}$, and this can be made arbitrarily small by choosing the 
        step lengths $t_i$ small enough.  
    \item\label{t:fb-dr aafne cvx} \textbf{Convex.}  If $f$ and $h_j$ are convex on $\Ecal$ 
    ($j=1,2,\dots,m$), then
      \begin{enumerate}[(a)]
        \item $T^{DR}_i$ is $\alpha$-fne on $\Ecal_{M_i}\bigoplus \{z\} $ with 
        constant $\alpha_{DR}=2/3$ and no violation;
        \item if $f$ is continuously differentiable  and $\nabla f$ satisfies \eqref{e:f grad-Lip}, 
        $T^{FB}_i$ with global step size $t<\tfrac{2\alphabar}{\Lbar}$ for $\Lbar=\max_j\{L_j\}$ is $\alpha$-fne 
        on $\Ecal_{M_i}\bigoplus \{z\} $ with 
        constant $\alpha_{FB}$ given by \eqref{e:alpha_FB} and no violation.
        \end{enumerate}
	\end{enumerate}	
	\end{propn}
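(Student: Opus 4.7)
The plan is to derive the regularity of $T_1^{FB}$ and $T_1^{DR}$ on $G$ (resp.\ on $\Ecal$ in the convex case) and then appeal to Theorem~\ref{t:afneie}\eqref{t:afneie i} to transfer that regularity to each partial blockwise mapping $T_i$ on the affine subspace $G_{M_i}\bigoplus\{z\}_{M_i^\circ}$: since $T_i$ coincides there with $T_1$ restricted to this subspace (and is the identity on the complementary blocks), it inherits exactly the constant $\alpha$ and no more than the violation $\epsilon$ of $T_1$. Thus the three items reduce to three verifications of regularity of $T_1^{FB}$ and $T_1^{DR}$ built from the constituent regularities already established in Proposition~\ref{t:f_j}, Corollary~\ref{t:h_j}, and Proposition~\ref{t:T_GD}.

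For the forward-backward part, I would first observe that
\[
  T_1^{FB}=J_{\partial h,t}\circ T_{GD},\qquad T_{GD}=\Id-\bigoplus_{j=1}^m t_j\nabla_{x_j}f,
\]
because the $j$-th coordinate of $T_1^{FB}(x)$ is $J_{\partial h_j,t_j}(x_j-t_j\nabla_{x_j}f(x))$ and the block resolvent is separable. Corollary~\ref{t:h_j} gives $J_{\partial h,t}$ a$\alpha$-fne on $G$ with constant $1/2$ and violation $\tau_h=\max_j\{\tau_{h_j}\}$; Proposition~\ref{t:T_GD} gives $T_{GD}$ a$\alpha$-fne with constant $\alphabar$ and violation $\epsilon_{GD}$ bounded by \eqref{e:nabla f aalph-fne violation}. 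I would then invoke the composition rule for a$\alpha$-fne mappings: two operators a$\alpha$-fne with a common constant $\beta$ and violations $\epsilon_1,\epsilon_2$ compose to an a$\alpha$-fne operator with constant $\tfrac{2\beta}{\beta+1}$ and violation $(1+\epsilon_1)(1+\epsilon_2)-1=\epsilon_1+\epsilon_2+\epsilon_1\epsilon_2$. Taking $\beta=\max\{1/2,\alphabar\}$ (which is legitimate since $\alpha$-fne with smaller $\alpha$ is stronger) produces $\alpha_{FB}=\tfrac{2}{1+1/\beta}$ of \eqref{e:alpha_FB} together with $\epsilon_{FB}\leq\epsilon_{GD}+\tau_h+\epsilon_{GD}\tau_h$, which is item \eqref{t:fb aafne ncvx}.

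The Douglas--Rachford case is handled analogously, per block: the $j$-th coordinate of $T_1^{DR}(x)$ is $\tfrac12\paren{R_{\sd f_j(\cdot;y),t_j}R_{\partial h_j,t_j}(x_j)+x_j}$, the standard $\tfrac12(\Id+R_fR_h)$ expression built from the partial resolvent of $f$ and the block-separable resolvent of $h$. By Proposition~\ref{t:f_j} and Corollary~\ref{t:h_j} both resolvents are a$\alpha$-fne with constant $1/2$ and violations $\tau_f=\max_j\{\tau_{f_j}\}$ and $\tau_h=\max_j\{\tau_{h_j}\}$; the corresponding reflectors $R=2J-\Id$ inherit their respective violations and are almost nonexpansive. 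Applying the same composition rule with the common constant $\beta=1/2$ produces $\alpha_{DR}=\tfrac{2(1/2)}{1/2+1}=\tfrac23$ and $\epsilon_{DR}\leq\tau_f+\tau_h+\tau_f\tau_h$, verifying \eqref{e:alpha_DR} and item \eqref{t:dr aafne ncvx}. Items \eqref{t:fb-dr aafne ncvx-cvx} and \eqref{t:fb-dr aafne cvx} follow by specialization: convexity of each $h_j$ forces $\tau_h=0$ in Corollary~\ref{t:h_j}, convexity of $f$ forces $\epsilon_{GD}=0$ in Proposition~\ref{t:T_GD} under $t<2\alphabar/\Lbar$, and these vanishing violations propagate trivially through the composition identity $(1+\epsilon_1)(1+\epsilon_2)-1$.

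The main obstacle is nailing down the precise composition formula for a$\alpha$-fne mappings with violations -- specifically, confirming that $(1+\epsilon_1)(1+\epsilon_2)-1$ is the correct rule for combining violations under composition and that the averaged-composition constant $\tfrac{2\beta}{\beta+1}$ valid for $\beta$-averaged mappings remains valid when ``almost'' is inserted on both sides. A secondary bookkeeping issue in the DR analysis is that the partial subgradient of $f_j$ appearing in the reflected resolvent $R_{\sd f_j(\cdot;y),t_j}$ depends on the auxiliary point $y=R_{\partial g_j,t_j}(x)$ through its parametric argument; however, Proposition~\ref{t:f_j} is stated uniformly in the parameter, so this requires careful notation but no new argument.
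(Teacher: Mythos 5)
Your overall architecture is the paper's: reduce via Theorem \ref{t:afneie}\eqref{t:afneie i} to the full-block mappings $T_1^{FB}$ and $T_1^{DR}$, obtain the constituent regularity from Proposition \ref{t:f_j}, Corollary \ref{t:h_j} and Proposition \ref{t:T_GD}, and combine via the calculus for a$\alpha$-fne mappings. The forward-backward part matches the paper exactly: the composition rule you state (constant $2/(1+1/\max\{\alpha_1,\alpha_2\})$, violation $(1+\epsilon_1)(1+\epsilon_2)-1$) is precisely what the paper imports from \cite[Proposition 2.4/Proposition 3.7]{LukNguTam18}, so the ``main obstacle'' you flag is settled by that citation rather than proved; and your treatment of parts \eqref{t:fb-dr aafne ncvx-cvx} and \eqref{t:fb-dr aafne cvx} ($\tau_h=0$, resp.\ $\epsilon_{GD}=0$ for $t<2\alphabar/\Lbar$) is the paper's.

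The Douglas--Rachford step, however, has a genuine gap as written. First, the reflectors do not ``inherit their respective violations'': if $J$ is a$\alpha$-fne with constant $1/2$ and violation $\tau$, a direct computation shows $R=2J-\Id$ is almost nonexpansive with violation $2\tau$. Second, and more seriously, the composition rule you invoke applies to a$\alpha$-fne mappings, and the reflectors are only almost nonexpansive, so they cannot be fed into it ``with common constant $\beta=1/2$''; the arithmetic $\tfrac{2(1/2)}{1/2+1}=\tfrac23$ is the constant for the composition $J_{\sd f}\circ J_{\sd h}$ of the two resolvents, which is not the operator $T^{DR}=\tfrac12\paren{R_{\sd f}R_{\sd h}+\Id}$, and your derivation never uses the outer average with the identity at all. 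If one runs the reflector route consistently -- compose the two almost nonexpansive reflectors to get violation $(1+2\tau_f)(1+2\tau_h)-1$, then average with $\Id$ -- one obtains that $T^{DR}$ is a$\alpha$-fne with constant $1/2$ (hence also with constant $2/3$) but with violation $\tau_f+\tau_h+2\tau_f\tau_h$, which is not the bound \eqref{e:alpha_DR}. The paper sidesteps this bookkeeping entirely by quoting \cite[Proposition 2.4]{LukNguTam18} directly for $T_1^{DR}$; your DR paragraph needs either that citation or a corrected reflector argument (and then a remark reconciling the resulting violation with \eqref{e:alpha_DR}).
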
	
\begin{proof}
Part \eqref{t:fb-dr aafne ncvx}.   
By Theorem \ref{t:afneie}, the respective regularity constants $\epsilon_i$
and $\alpha_i$ are bounded above by the respective constants of $T^{FB}_1$ and $T^{DR}_1$, 
which are the mappings including
all of the blocks.  It suffices, then, to determine the regularity of $T^{FB}_1$ and $T^{DR}_1$.
Part \eqref{t:dr aafne ncvx}. By Proposition \ref{t:f_j} and Corollary \ref{t:h_j} 
$J_{\partial f_t}$ and $J_{\partial h_t}$ are a$\alpha$-fne 
with constant $\alpha_{h_t} = 1/2$ and violation $\tau_{f} = \max_j\{\tau_{f_j}\}$ 
(respectively $\tau_{h} = \max_j\{\tau_{h_j}\}$) on $G$.  
Then by 
\cite[Proposition 2.4]{LukNguTam18} $T_1^{DR}$ is a$\alpha$-fne with constant $\alpha_{DR}=2/3$
and (maximal) violation 
given by \eqref{e:alpha_DR} on $G$.

Part \eqref{t:fb aafne ncvx}. By Proposition \ref{t:T_GD}, $T_{GD}$ is 
a$\alpha$-fne on $G$ with violation $\epsilon_{GD}$ no larger than 
\eqref{e:nabla f aalph-fne violation}
and constant $\alphabar=\max_j\{\alpha_j\}$.  By Corollary \ref{t:h_j} $J_{\partial h_t}$ is a$\alpha$-fne 
with constant $\alpha_{h_t} = 1/2$ and violation $\tau_{h} = \max_j\{\tau_{h_j}\}$ on $\Ecal$.  Then by 
\cite[Proposition 2.4/Proposition 3.7]{LukNguTam18} $T_1^{FB}$ is a$\alpha$-fne with constant $\alpha_{FB}$
and (maximal) violation 
given by \eqref{e:alpha_FB} on $G$.

Parts \eqref{t:fb-dr aafne ncvx-cvx}-\eqref{t:fb-dr aafne cvx} follow immediately from 
part \eqref{t:fb-dr aafne ncvx} and Propositions \ref{t:f_j}-\ref{t:T_GD}. 
\end{proof}

\begin{cor}\label{t:aafneie fb}  For $G\subset\Ecal$, let $\map{\Phi}{G\times \Ibb}{G}$ be 
the update function given by 
$\Phi(x,i) = T_{i}x$ where $T_i$ is either $T^{FB}_i$ or $T^{DR}_i$ defined respectively 
by \eqref{e:TFB_i} and \eqref{e:TDR_i}.
\begin{enumerate}[(i)]
 \item \textbf{Fully nonconvex.}  
 Under the assumptions of Proposition \ref{t:fb-dr aafne}\eqref{t:fb-dr aafne ncvx}, 
    that is both $f$ and $h$  in \eqref{e:TFB_i} are nonconvex, the corresponding 
    update function $\Phi(x,i)$ is 
    a$\alpha$-fne in expectation with respect to the weighted norm $\|\cdot\|_\pbf$ 
    with regularity constants $\pbar\epsilon_{DR}$ and $\alpha_{DR}$ 
    (respectively $\pbar\epsilon_{FB}$ and $\alpha_{FB}=2/3$) 
    corresponding to \eqref{e:alpha_DR} (respectively \eqref{e:alpha_FB}).
\item \textbf{Partially nonconvex.}
Under the assumptions of Proposition \ref{t:fb-dr aafne}\eqref{t:fb-dr aafne ncvx-cvx},  
    that is $f$ smooth nonconvex with Lipschitz and hypomonotone gradient and $h_j$ convex in \eqref{e:TFB_i},
    $\Phi(x,i)=T^{FB}_i(x)$ is a$\alpha$-fne in expectation with respect to the weighted norm 
    $\|\cdot\|_\pbf$ with constant $\alpha_{FB}$ as above 
    and violation at most $\pbar\epsilon_{GD}$ with $\epsilon_{GD}$ 
    given by \eqref{e:nabla f aalph-fne violation};
    this violation can be made arbitrarily small by choosing the 
    step lengths $t_i$ small enough.  
\item \textbf{Convex.} If both $f$ and $h_j$ 
    are convex on $\Ecal$ ($j=1,2,\dots,m$), then  
    $T^{DR}_i(x)$ 
    is $\alpha$-fne in expectation with respect to the weighted norm 
    $\|\cdot\|_\pbf$  (no violation) and constant $\alpha_{DR}=2/3$ on $\Ecal$. 
    In the case of $T^{FB}$, if $\nabla f$ satisfies \eqref{e:f grad-Lip} and the 
    global step size is bounded by $t<\tfrac{2\alphabar}{\Lbar}$ for 
    $\Lbar=\max_j\{L_j\}$, 
    $T^{FB}_i(x)$ 
    is $\alpha$-fne in expectation with respect to the weighted norm 
    $\|\cdot\|_\pbf$  (no violation) and constant $\alpha_{GD}$ on $\Ecal$. 
\end{enumerate}
\end{cor}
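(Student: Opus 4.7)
The plan is to combine two earlier results: Proposition~\ref{t:fb-dr aafne} establishes a$\alpha$-fne regularity of the blockwise composite operators $T_1^{FB}$ and $T_1^{DR}$ on the full space $G$, while Theorem~\ref{t:afneie} lifts blockwise a$\alpha$-fne regularity of a family $\{T_i\}_{i\in\Ibb}$ to a$\alpha$-fne in expectation of the update function $\Phi(x,i) = T_i x$ in the weighted norm $\|\cdot\|_\pbf$, inflating the violation by the factor $\pbar$. The corollary is a direct chain of these two results.

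The first step is to verify the hypotheses of Theorem~\ref{t:afneie} for $T_i \in \{T_i^{FB}, T_i^{DR}\}$. Hypothesis~\eqref{t:afneie a}, that $T_i$ is the identity on $\Ecal_{M_i^\circ}$, is built directly into the blockwise definitions \eqref{e:TFB_i} and \eqref{e:TDR_i}, each of which alters only the blocks indexed by $j \in M_i$. Hypothesis~\eqref{t:afneie b}, that $T_1$ is a$\alpha$-fne on $G$ with constants $\alphabar$ and $\epsilonbar$, is furnished by Proposition~\ref{t:fb-dr aafne} applied at $i=1$; since $M_1 = \{1,\dots,m\}$ the set $M_1^\circ$ is empty and the affine slice $G_{M_1} \bigoplus \{z\}_{M_1^\circ}$ degenerates to $G$ itself.

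Then in each of the three settings I would simply read off the constants. In the fully nonconvex case, Proposition~\ref{t:fb-dr aafne}\eqref{t:fb-dr aafne ncvx} supplies $(\alpha_{DR}, \epsilon_{DR})$ or $(\alpha_{FB}, \epsilon_{FB})$ from \eqref{e:alpha_DR}--\eqref{e:alpha_FB}; in the partially nonconvex case, Proposition~\ref{t:fb-dr aafne}\eqref{t:fb-dr aafne ncvx-cvx} supplies $(\alpha_{FB}, \epsilon_{GD})$ with $\epsilon_{GD}$ from \eqref{e:nabla f aalph-fne violation}; and in the convex case, Proposition~\ref{t:fb-dr aafne}\eqref{t:fb-dr aafne cvx} supplies the same $\alphabar$ with no violation, holding on all of $\Ecal$. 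Invoking Theorem~\ref{t:afneie}\eqref{t:afneie ii} then produces \eqref{eq:Phi aafneie} for $\Phi$ with constant $\alphabar$ and violation at most $\pbar\epsilonbar$, which in the convex case is zero, so the ``almost'' qualifier drops and one obtains genuine $\alpha$-fne in expectation on $\Ecal$.

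Analytically there is no real obstacle: both prerequisites are in hand and the proof amounts to careful bookkeeping of the constants across the three regimes. The point requiring the most attention is tracking the domain -- the nonconvex statements are over the subset $G \subset \Ecal$, whereas the convex statements are global -- and ensuring that the step-size restrictions appearing in Proposition~\ref{t:fb-dr aafne} (notably the convex bound $t < 2\alphabar/\Lbar$ for $T^{FB}$) are preserved when the result is transported to the level of the update function $\Phi$.
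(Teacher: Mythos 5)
Your proposal is correct and follows exactly the paper's own route: the published proof is the one-line observation that the corollary is an immediate consequence of Proposition~\ref{t:fb-dr aafne} and Theorem~\ref{t:afneie}, and your argument simply makes explicit the verification (identity on $\Ecal_{M_i^\circ}$, regularity of $T_1$ on the degenerate slice $G_{M_1}\bigoplus\{z\}_{M_1^\circ}=G$, bookkeeping of constants and domains) that the paper leaves implicit. No gaps.
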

\begin{proof}
 This is an immediate consequence of Proposition \ref{t:fb-dr aafne} and 
 Theorem \ref{t:afneie}
\end{proof}

Before presenting the convergence results it is worthwhile pointing out that 
the partial blockwise forward-backward mappings $T^{FB}_i$ and $T^{DR}_i$
have common fixed points, and these are critical points of 
\eqref{e:P1}.  In other words, the stochastic fixed point problem is consistent. 
As shown in Section \ref{s:consistent2}, in this case the metric subregularity condition 
\eqref{e:metricregularity} simplifies to \eqref{eq:Sport day} when 
$\Psi(\mu)=0$ if and only if $\mu\in\inv\Pcal$ 
and  $\Supp\mu\subset C$.  In the convex setting we have the following correspondence
between invariant measures of the stochastic block iterations and minima of \eqref{e:P1}.
\begin{prop}
 Let $\Pcal$ be the Markov operator associated with either Algorithm \ref{algo:sblfb} or 
 \ref{algo:sbdr}. In the setting of Lemma \ref{t:inv P - crit pts}, if $f$ and $g_j$ 
 (for all $j=1,\dots,m$) are convex, 
 then $\inv\Pcal = \set{\pi}{\Supp\pi\subset C}$ and whenever $x\in C$ then 
  almost surely $x\in\argmin\{ f +\sum_{j=1}^m g_j\}$.
\end{prop}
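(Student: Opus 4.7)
The plan is to reduce both claims to already-established machinery: the first to the paracontraction characterization of invariant measures in Theorem \ref{t:invMeasforParacontra}, and the second to Fermat's rule for convex subdifferentials via Lemma \ref{t:inv P - crit pts}.

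First I would verify that the update function $\Phi(x,\xi)=T_\xi x$ is a paracontraction in expectation on $G$ with respect to $\|\cdot\|_\pbf$. Part (iii) of Proposition \ref{t:fb-dr aafne} gives, under convexity of $f$ and each $g_j$ (and, for the forward-backward case, the step-length bound $t<2\alphabar/\Lbar$), that each $T_i$ is \emph{genuinely} $\alpha$-fne, with no violation, on every affine slice $\Ecal_{M_i}\bigoplus\{z\}$. The block structure \eqref{e:T blockwise} is exactly hypothesis (a) of Corollary \ref{t:para}, and the slicewise $\alpha$-fne property supplies hypothesis (b). The remaining hypothesis $C\neq\emptyset$ is available because, in the convex case, the common fixed points of the blockwise mappings $T^{FB}_i$ (respectively $T^{DR}_i$) are exactly the points $x$ with $0\in\partial(f+\sum_j g_j)(x)$, i.e.\ $C=\crit(f+\sum_j g_j)=\argmin(f+\sum_j g_j)$, which we assume to be non-empty (otherwise the statement is vacuous). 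Corollary \ref{t:para} then produces the paracontraction-in-expectation property, and Theorem \ref{t:invMeasforParacontra}(ii), under the standing compactness of $G$ in this section, yields the first claim $\inv\Pcal=\set{\pi}{\Supp\pi\subset C}$.

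For the almost-sure statement I would invoke Lemma \ref{t:inv P - crit pts} directly: in the convex case $x\in C$ if and only if $x\in\crit(f+\sum_j g_j)$ almost surely. Fermat's rule for convex functions then upgrades criticality to global optimality — $0\in\partial(f+\sum_j g_j)(x)$ is both necessary and sufficient for $x\in\argmin(f+\sum_j g_j)$ — so the claim follows. The only real bookkeeping is confirming that the slicewise $\alpha$-fne property genuinely lifts to a paracontraction \emph{in expectation} on the full product space (handled by Corollary \ref{t:para}) and that the compactness/boundedness required by Theorem \ref{t:invMeasforParacontra}(ii) is in force; neither is a substantive obstacle once Proposition \ref{t:fb-dr aafne}(iii) and Lemma \ref{t:inv P - crit pts} have been assembled.
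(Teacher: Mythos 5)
Your proposal is correct and follows essentially the same route as the paper's own proof: establish that the convex case makes each $T_i$ $\alpha$-fne (no violation) on the affine slices $\Ecal_{M_i}\bigoplus\{z\}_{M_i^\circ}$, apply Corollary \ref{t:para} to get paracontractions in expectation, invoke Theorem \ref{t:invMeasforParacontra}\eqref{t:invMeasforParacontra ii} for the characterization of $\inv\Pcal$, and finish with Lemma \ref{t:inv P - crit pts} together with the identity $\crit(f+\sum_j g_j)=\argmin(f+\sum_j g_j)$ in the convex setting. Your extra bookkeeping (nonemptiness of $C$ and the boundedness needed for Theorem \ref{t:invMeasforParacontra}\eqref{t:invMeasforParacontra ii}) is a reasonable added precaution but does not change the argument.
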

\begin{proof}
When $f$ and $g_j$ (for all $j=1,\dots,m$) are convex, the corresponding mappings $T_i$ 
defined by either \eqref{e:TFB_i} or \eqref{e:TDR_i} are single-valued self-mappings on $\Ecal$ 
and $\alpha$-fne on $\Ecal_{M_i}\oplus \{z\}_{M^\circ_i}$ for every $z\in M_i^\circ$ 
as long as the step size $t_i$ is small enough
(\cite[Propositions 3.7 and 3.10]{LukNguTam18} specialized to the convex case).  Then
by Corollary \ref{t:para} the mappings $T_i$ are  
paracontractions in expectation on $\Ecal$.  
 The claim then follows from Theorem \ref{t:invMeasforParacontra}\eqref{t:invMeasforParacontra ii} 
 and Lemma \ref{t:inv P - crit pts} since in this case 
 $\crit(f + \sum_{j=1}^m g_j)= \argmin\{ f +\sum_{j=1}^m g_j\}$.  
\end{proof}

The final result of this study collects all of these facts in the context of the 
Markov chain underlying Algorithm \ref{algo:sblfb} and \ref{algo:sbdr}. 
\begin{prop}\label{t:convergence sbfb-sbdr}
Let $\Pcal$ be the Markov operator associated with the S-BFBS Algorithm \ref{algo:sblfb}
or the S-BDRS Algorithm \ref{algo:sbdr} and let $\paren{\mu_{k}}_{k\in\Nbb}$ be the 
corresponding sequence of measures initialized by any $\mu_0\in\mathscr{P}_2(G)$, 
where $G$ is a closed subset of $\Ecal$.  
Assume that 
$G\supset\crit\paren{f +\sum_{j=1}^m g_j}\neq\emptyset$ and $\inv\Pcal = \mathscr{C}$
defined by \eqref{eq:stoch_feas_probl2}.
Let $\Psi$ given by \eqref{eq:Psi} be such that  
$\Psi(\mu)=0$ if and only if $\mu\in\inv\Pcal$.  
Additionally, let the mappings $T_i$ be self-mappings on  
$G$ where $\Psi$  satisfies 
\eqref{eq:Sport day} with gauge $\rho$ given by \eqref{eq:gauge} 
  with $\tau=(1-\alpha_{*})/\alpha_{*}$, $\epsilon = \pbar\epsilon_{*}$ 
  for constants $\alpha_{*}$ and violation $\epsilon_{*}$ given by  
  either \eqref{e:alpha_FB} or \eqref{e:alpha_DR} (depending on the algorithm), 
  and  $\theta_{\tau,\epsilon}$ satisfying \eqref{eq:theta_tau_eps}.
\begin{enumerate}
 \item   \textbf{Fully nonconvex.}  Under the assumptions of Proposition 
 \ref{t:fb-dr aafne}\eqref{t:fb-dr aafne ncvx},  the sequence $(\mu_{k})$ satisfies 
 \[
  d_{W_{2,\pbf}}\paren{\mu_{k+1},\inv\mathcal{P}}
                \leq \theta_{\tau,\epsilon}\paren{d_{W_{2,\pbf}}\paren{\mu_k,\inv\mathcal{P}}} 
                \quad \forall k \in \mathbb{N}.  
 \]
   If $\tau$ and $\epsilon$ are such that at least one of the conditions in Assumption \ref{ass:msr convergence} holds, 
then $\mu_k\to \pi^{\mu_0}\in\inv\mathcal{P}\cap\mathscr{P}_2(G)$
 in the $d_{W_{2,\pbf}}$ metric with rate 
 $O\paren{\theta_{\tau, \epsilon}^{(k)}(t_0)}$ in the case of 
 Assumption \ref{ass:msr convergence}\eqref{t:msr convergence, necessary sublin}
 where $t_0 = d_{W_{2,\pbf}}\paren{\mu^0, \inv\Pcal}$,  and with rate  $O(s_k(t_0))$ for 
$s_k(t)\equiv 
\sum_{j=k}^\infty \theta_{\tau, \epsilon}^{(j)}(t)$
in the case of Assumption \ref{ass:msr convergence}\eqref{t:msr convergence, necessary lin+}.  
Moreover, 
$\Supp\pi^{\mu_0}\subset C\equiv \set{x\in G}{\Pbb\paren{T_\xi x=x}=1}$.
 \item \textbf{Partially nonconvex.} 
 Under the assumptions of Proposition 
 \ref{t:fb-dr aafne}\eqref{t:fb-dr aafne ncvx-cvx},   
  that is $f$ smooth nonconvex with Lipschitz and hypomonotone gradient and $h_j$ convex in \eqref{e:TFB_i}, 
  if there exist $\tau$ and $\epsilon$ such that at least one of the conditions in Assumption \ref{ass:msr convergence} holds, then for 
 all step lengths $t_i$ small enough in  \eqref{e:TFB_i}
 and any initial distribution 
 $\mu_0$ close enough to $\inv\Pcal$, the sequence 
 $\mu_k\to \pi^{\mu_0}\in\inv\mathcal{P}\cap\mathscr{P}_2(G)$
 in the $d_{W_{2,\pbf}}$ metric with rate at least
 $O\paren{\theta_{\tau, \epsilon}^{(k)}(t_0)}$ in the case of 
 Assumption \ref{ass:msr convergence}\eqref{t:msr convergence, necessary sublin},  and with rate  at least $O(s_k(t_0))$ in the case of Assumption \ref{ass:msr convergence}\eqref{t:msr convergence, necessary lin+};  moreover, 
$\Supp\pi^{\mu_0}\subset C$.
 \item \textbf{Convex.} If $f$ and $h_j$ are convex on $\Ecal$,  
    and there exists $\tau$ such that at least one of the conditions in Assumption \ref{ass:msr convergence} holds when $\epsilon=0$, 
    then  the sequence $(\mu_k)$ corresponding to Algorithm \ref{algo:sbdr} 
    initialized from any $\mu_0\in\mathscr{P}_2(\Ecal)$, 
    converges in the metric $d_{W_{2,\pbf}}$ to an 
    invariant distribution with rate at least
 $O\paren{\theta_{\tau, \epsilon}^{(k)}(t_0)}$ in the case of 
 Assumption \ref{ass:msr convergence}\eqref{t:msr convergence, necessary sublin},  and with rate  at least $O(s_k(t_0))$ in the case of Assumption \ref{ass:msr convergence}\eqref{t:msr convergence, necessary lin+}.
    Moreover 
    $\Supp\pi^{\mu_0}\subset C\equiv \argmin\paren{f+\sum_{j=1}^m g_j}$.
    If $f$ is continuously differentiable and satisfies \eqref{e:f grad-Lip}, then 
    the stated convergence in the case of Algorithm \ref{algo:sblfb} holds for the 
    global step length $t<\tfrac{2\alphabar}{\Lbar}$.  
    \end{enumerate}
\end{prop}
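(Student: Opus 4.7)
The plan is to assemble the machinery developed in Sections \ref{s:regularity}--\ref{s:convergence} and verify the hypotheses of Theorem \ref{t:msr convergence} (and, for the convex statement, Proposition \ref{t:rm and qafne to convergence}) separately in each of the three cases. The skeleton is the same throughout: first, Proposition \ref{t:fb-dr aafne} yields the a$\alpha$-fne regularity of the full-block mapping $T_1^{FB}$ (or $T_1^{DR}$) on $G$ with the specific constants appearing in the conclusion; second, Corollary \ref{t:aafneie fb} (via Theorem \ref{t:afneie} and Proposition \ref{thm:Tafne in exp 2 pafne of P}) lifts this to a$\alpha$-fne of the update function $\Phi(x,i)=T_i x$ in expectation with respect to $\|\cdot\|_\pbf$, and then to a$\alpha$-fne of $\Pcal$ in measure with violation at most $\pbar\epsilon_*$; third, the subregularity hypothesis on $\Psi$, together with the choice $\tau=(1-\alpha_*)/\alpha_*$ and $\epsilon=\pbar\epsilon_*$, matches the setup of Theorem \ref{t:msr convergence}, which delivers both the one-step gauge inequality \eqref{eq:gauge convergence} and, under Assumption \ref{ass:msr convergence}, convergence in $d_{W_{2,\pbf}}$ with the stated rates; fourth, the support statement $\Supp\pi^{\mu_0}\subset C$ follows from the consistency hypothesis $\inv\Pcal=\mathscr{C}$ combined with Theorem \ref{t:invMeasforParacontra}\eqref{t:invMeasforParacontra ii}.

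For the fully nonconvex case I would simply invoke this chain with the constants $(\alpha_{DR},\epsilon_{DR})$ from \eqref{e:alpha_DR} or $(\alpha_{FB},\epsilon_{FB})$ from \eqref{e:alpha_FB}. For the partially nonconvex case the additional input is Proposition \ref{t:fb-dr aafne}\eqref{t:fb-dr aafne ncvx-cvx}: the violation reduces to $\pbar\epsilon_{GD}$, which can be made arbitrarily small by shrinking the step lengths $t_j$. Hence if Assumption \ref{ass:msr convergence} holds for some fixed $\epsilon>0$, one selects $t_j$ small enough that $\pbar\epsilon_{GD}\leq\epsilon$, and the requirement ``$\mu_0$ close enough to $\inv\Pcal$'' is exactly the condition $t_0=d_{W_{2,\pbf}}(\mu_0,\inv\Pcal)<\tbar$ imposed by the gauge \eqref{eq:theta_tau_eps}. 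For the convex case, Proposition \ref{t:fb-dr aafne}\eqref{t:fb-dr aafne cvx} gives $\epsilon_*=0$, so the gauge inequality simplifies accordingly and no step-size smallness is needed for Douglas--Rachford, whereas forward--backward requires the global step $t<2\alphabar/\Lbar$ dictated by Proposition \ref{t:T_GD}. Moreover, convexity of $f$ and the $g_j$ identifies $\crit(f+\sum_j g_j)$ with $\argmin(f+\sum_j g_j)$, and by Lemma \ref{t:inv P - crit pts} the consistent set $C$ then coincides with the minimizer set.

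The main subtlety I anticipate is that the metric-subregularity hypothesis is phrased via \eqref{eq:Sport day}, i.e.\ the pointwise (Dirac-measure) form, whereas Theorem \ref{t:msr convergence} requires the functional form \eqref{e:metricregularity} over all of $\mathscr{P}_2(G)$. Under the consistency hypothesis $\inv\Pcal=\mathscr{C}$ the two are equivalent by the calculation \eqref{e:pre metricregularity StoPBForBS}--\eqref{eq:Sport day} in Section \ref{s:consistent2}, so no new analytical work is needed: the proof reduces to careful bookkeeping of constants and to pointing out, in the partially nonconvex case, that because $\epsilon_{GD}$ is a monotone function of the step sizes, a threshold below the tolerance built into Assumption \ref{ass:msr convergence} can always be achieved.
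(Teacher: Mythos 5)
Your assembly is exactly the paper's (implicit) proof: the proposition is stated there without a separate argument, as a collection of the preceding facts, namely Proposition \ref{t:fb-dr aafne} for the regularity of $T_1^{FB}$/$T_1^{DR}$, Theorem \ref{t:afneie} and Corollary \ref{t:aafneie fb} to lift this to a$\alpha$-fne in expectation and in measure with violation $\pbar\epsilon_*$, Theorem \ref{t:msr convergence} for the gauge inequality and rates under Assumption \ref{ass:msr convergence}, the consistent-case reduction of \eqref{e:metricregularity} to \eqref{eq:Sport day}, and Theorem \ref{t:invMeasforParacontra}\eqref{t:invMeasforParacontra ii} together with Lemma \ref{t:inv P - crit pts} for the support statements. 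Your bookkeeping of the constants, the step-size/violation tradeoff in the partially nonconvex case, and the identification $C=\argmin(f+\sum_j g_j)$ in the convex case all match the intended chain, so the proposal is correct and takes essentially the same route as the paper.
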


\section{Final Remarks}
There a several open technicalities lurking between the lines above, and one rather obvious
challenge hiding in plain sight.  To the hidden technicalities belong the question of
whether metric subregularity is necessary for quantitative convergence in some appropriate
metric of Markov operators that are not paracontractions in measure. We conjecture that 
this is true.  Another open technical issue concerns the statement of asymptotic regularity in 
Proposition \ref{t:asymp reg}.  This result is incomplete without some extension to a 
weak type of convergence in distribution.  For consistent 
stochastic fixed point problems, if each of the update functions
$T_i$ were $\alpha$-fne, then almost sure weak convergence of the iterates is 
guaranteed \cite[Theorem 3.9]{HerLukStu19a};  at issue here is whether this 
holds when $T_i$ is pointwise $\alpha$-fne {\em in expectation}
at invariant measures of the corresponding Markov operator.  We expect that there should be 
a counterexample to this claim.  Characterization of the supports of invariant measures 
in the inconsistent case is quite challenging and essential for meaningfully connecting the 
limiting distributions of the algorithms to solutions to the underlying optimization problem. 
Finally, the restriction of the study to single-valued mappings does not allow one to capture 
the full extent of behavior one sees with nonconvex problems.  Projection methods for sparse 
affine feasibility, for instance, have the property that the projection onto a sparsity constraint 
can be multi-valued on all neighborhoods of a solution (see \cite[Lemma III.2]{HesseLukeNeumann14}).  
An extension of the analysis presented here to multi-valued mappings, is required.

The most difficult challenge to all of this is the task of numerically
monitoring convergence in distribution of random variables.  To do this completely one needs first of all 
efficient means for computing the Wasserstein distance between measures;  in other words, one needs 
to solve optimal transport problems efficiently.  Again, for consistent stochastic feasibility when 
convergence of the iterates can be guaranteed almost surely, optimal transport is not needed;  more 
generally, however, this machinery is essential.  Secondly, one needs to numerically estimate the 
distributions whose distances are to be computed.  These are significant challenges worthy of attention.

\section{Funding and/or Conflicts of interests/Competing interests}
This work was supported in part by a grant from the Deutsche Forschungsgemeinschaft 
(DFG, German Research Foundation) – Project-ID 432680300 – SFB 1456.
The manuscript has not been submitted to any other journal for simultaneous consideration. 
The author has no financial or non-financial interests that are directly or indirectly related to the work 
submitted for publication. 


\end{document}